\newtheorem{thm}{Theorem}
\newtheorem*{maina}{Theorem A}
\newtheorem*{mainb}{Theorem B}
\newtheorem{corollary}[thm]{Corollary}
\newtheorem{lemma}[thm]{Lemma}
\newtheorem{prop}[thm]{Proposition}
\newtheorem{problem 1}{Problem 1}
\newtheorem{problem 2}{Problem 2}
\newtheorem{problem 3}{Problem 3}
\theoremstyle{definition}
\newtheorem{question}[thm]{Question}
\newtheorem{remark}[thm]{Remark}
\title[Zeros of the Ising model]{Location of zeros for the partition function of the Ising model on bounded degree graphs}
\author[H. Peters]{Han Peters}
\address{H. Peters: Korteweg de Vries Institute for Mathematics, University of Amsterdam, Science Park 107, 1090GE Amsterdam, the Netherlands}
\email{hanpeters77@gmail.com}
\author[G. Regts]{Guus Regts$^{\dag}$}
\address{G. Regts: Korteweg de Vries Institute for Mathematics, University of Amsterdam, Science Park 107, 1090GE Amsterdam, the Netherlands.}
\email{guusregts@gmail.com}
\thanks{$^{\dag}$ Supported by a personal NWO Veni grant.}
\numberwithin{equation}{section}
\begin{document}

\begin{abstract}
The seminal Lee-Yang theorem states that for any graph the zeros of the partition function of the ferromagnetic Ising model lie on the unit circle in $\mathbb C$. In fact the union of the zeros of all graphs is dense on the unit circle.
In this paper we study the location of the zeros for the class of graphs of bounded maximum degree $d\geq 3$, both in the ferromagnetic and the anti-ferromagnetic case.
We determine the location exactly as a function of the inverse temperature and the degree $d$.
An important step in our approach is to translate to the setting of complex dynamics and analyze a dynamical system that is naturally associated to the partition function.

\begin{footnotesize}
MSC2010: 37F10, 05C31, 68W25, 82B20.
\end{footnotesize}
\end{abstract}

\maketitle


\section{Introduction and main result}
For a graph $G=(V,E)$, $\xi,b\in \mathbb C$, the \emph{partition function of the Ising model} $Z_G(\xi,b)$ is defined as
\begin{equation}\label{eq:def pf}
Z_G(\xi,b):=\sum_{U\subseteq V}\xi^{|U|} \cdot b^{|\delta(U)|},
\end{equation}
where $\delta(U)$ denotes the collection of edges with one endpoint in $U$ and one endpoint in $V\setminus U$.
If $\xi$ and $b$ are clear from the context, we will often just write $Z_G$ instead of $Z_G(\xi,b)$.
In this paper we typically fix $b$ and think of $Z_G$ as a polynomial in $\xi$.
The case $0< b<1$ is often referred to as the ferromagnetic case, while the case where $b>1$ is referred to as the anti-ferromagnetic case.

The Ising model is a simple model to study ferromagnetism in statistical physics.
In statistical physics the partition function of the Ising model is often written as
\begin{equation}\label{eq:def pf phys}
\sum_{\sigma:V\to \{-1,1\}}\exp\left (J/T\cdot \sum_{\{u,v\}\in E}\sigma(u)\sigma(v)+h/T\cdot \sum_{v\in V}\sigma(v)\right ),
\end{equation}
where $J$ denotes the coupling constant, $h$ the external magnetic field and $T>0$ the temperature, normalizing the Boltzmann constant to $1$ for convenience.
Setting $\xi=e^{-2h/T}$, $b=e^{-2J/T}$ and $U = \{v: \sigma(v) = -1\}$, then up to a factor of $e^{J\cdot|E|/T+h\cdot |V|/T}$, the two partition functions \eqref{eq:def pf} and \eqref{eq:def pf phys} are the same.

Lee and Yang~\cite{LY52II} proved that for any graph $G$ and any $b\in [-1,1]$ all zeros of $Z_G$ lie on the unit circle in $\mathbb C$. Their result attracted enormous attention in the literature, and similar statements have been proved in much more general settings, see for example~\cite{LS, Newman, Ru71,SF71,MS96,BM97,BG01,BB09,BB09a,Ru10,LSS17,Roederetal18,LSS18}.

In both the ferromagnetic and the antiferromagnetic case the union of the roots of $Z_G$ over all graphs $G$ lies dense in the unit circle. Density in the ferromagnetic case in fact follows from our results, as will be pointed out in Remark \ref{rem:dens}. It is natural to wonder for which classes of graphs and choice of parameters $b$ there are zero-free regions on the circle.
For the class of binary Cayley trees (see Section~\ref{sec:ratios} for a definition) this question has been studied by Barata and Marchetti~\cite{BM97} and Barata and Goldbaum~\cite{BG01}.
In the present paper we focus on the collection of graphs of bounded degree, and completely describe the location of the zeros for this class of graphs.
For $d\in \mathbb{N}$ we denote by $\mathcal{G}_d$ the collection of graphs with maximum degree at most $d$. By $\mathbb{D}$ we denote the open unit disk in $\mathbb{C}$.
Moreover, we will occasionally abuse notation and identify $(-\pi,\pi]$ with $\partial \mathbb{D}$, the unit circle. Given $\theta \in (-\pi, \pi)$ we write
$$
I(\theta):=\{e^{i\vartheta}\mid \vartheta\in (-\theta,\theta)\}.
$$
Our main results are:

\begin{maina}[ferromagnetic case]\label{thm:main}
Let $d\in \mathbb{N}_{\geq 2}$ and let $b\in (\frac{d-1}{d+1}, 1)$. Then there exists $\theta_b\in (-\pi,\pi)$ such that the following holds:
\begin{itemize}
\item[(i)]  for any $\xi\in I(\theta_b)$ and any graph $G\in \mathcal{G}_{d+1}$ we have $Z_G(\xi,b)\neq 0$;
\item[(ii)] the set $\{\xi=e^{i\vartheta}\in \partial \mathbb{D}\setminus  I(\theta_b)\mid Z_G(\xi,b)=0 \text{ for some }G\in \mathcal{G}_{d+1}\}$ is dense in $\partial \mathbb{D}\setminus  I(\theta_b)$.
\end{itemize}
\end{maina}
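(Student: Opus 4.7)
The plan is to translate the theorem into a question about complex dynamics via tree recursions, then to establish zero-freeness through an invariant-domain argument and density via a non-normality argument. For a rooted tree $T$ of maximum degree at most $d+1$ with root having subtrees $T_1,\dots,T_k$, $k\leq d$, set $R_T(\xi,b):=Z_T^+/Z_T^-$, where $Z_T^\pm$ denotes the restricted partition function with the root in (resp.\ out of) $U$. The standard recursion
$$R_T=\xi\prod_{i=1}^k M(R_{T_i}),\qquad M(z):=\frac{bz+1}{z+b},$$
together with $Z_T=Z_T^++Z_T^-$ gives $Z_T=0\iff R_T=-1$. The homogeneous case $k=d$ is the iteration of the degree-$d$ rational map $f_\xi(z):=\xi M(z)^d$. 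The Möbius factor $M$ preserves $\partial\mathbb D$ when $b\in(0,1)$, so $f_\xi$ does likewise when $|\xi|=1$, and the whole analysis takes place in circle dynamics. At $\xi=1$ the fixed point $z=1$ has multiplier $d(b-1)/(b+1)$, which lies in $\mathbb D$ exactly when $b>(d-1)/(d+1)$: this is the dynamical meaning of the hypothesis.

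For (i), I would define $\theta_b$ as the supremum of $\theta$ such that for every $\xi\in I(\theta)$ the holomorphic continuation $p(\xi)$ of the fixed point $p(1)=1$ remains attracting for $f_\xi$ with immediate basin containing the initial value $\xi$. The central step is the construction of an open set $\Omega_\xi\subset\widehat{\mathbb C}\setminus\{-1\}$ containing $\xi$ that is forward-invariant under every partial recursion $(z_1,\dots,z_k)\mapsto\xi\prod_{i=1}^k M(z_i)$ for $1\leq k\leq d$. Induction on tree depth then yields $R_T\in\Omega_\xi$ for every tree $T$ of maximum degree at most $d+1$, whence $Z_T(\xi,b)\neq 0$. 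The extension from trees to arbitrary graphs $G\in\mathcal G_{d+1}$ proceeds through a self-avoiding walk tree representation: the marginal ratio at any vertex of $G$ coincides with the ratio of an auxiliary tree of maximum degree at most $d+1$, so the tree estimate transfers and $Z_G(\xi,b)\neq 0$ follows.

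For (ii), given $\xi_0\in\partial\mathbb D\setminus I(\theta_b)$ and $\varepsilon>0$, the natural witnesses of density are complete $d$-ary trees $T_n^\ast$ of depth $n$, for which $Z_{T_n^\ast}(\xi,b)=0$ is equivalent to $f_\xi^n(\xi)=-1$. Outside $I(\theta_b)$ the attracting fixed point has either bifurcated (its multiplier has left $\mathbb D$) or the starting point $\xi$ has escaped its basin, so the family $\{\xi\mapsto f_\xi^n(\xi)\}_{n\geq 1}$ fails to be normal at $\xi_0$. A Montel-type argument then shows that $\bigcup_n\{\xi:f_\xi^n(\xi)=-1\}$ is dense in a neighborhood of $\xi_0$ inside $\partial\mathbb D\setminus I(\theta_b)$, while Lee--Yang confines these zeros to the unit circle.

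The principal obstacle is the construction of $\Omega_\xi$ in step (i): attractiveness of the fixed point is a univariate statement, whereas the tree recursion mixes subtrees of different shapes, so one must identify a common open set preserved by every product $\xi\prod_{i=1}^k M(z_i)$ with varying $k\leq d$ and varying $z_i\in\Omega_\xi$, not merely by $f_\xi$ itself. Pinning down $\Omega_\xi$ sharply, so that it degenerates precisely at the boundary $\vartheta=\pm\theta_b$, and then verifying that the self-avoiding walk tree construction goes through for the Ising model at complex parameter $\xi$, form the technical heart of the argument; once both are accomplished, the density statement in (ii) follows by relatively standard normal-family reasoning.
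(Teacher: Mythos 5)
Your overall architecture --- the ratio recursion, the passage to the circle dynamics of $f_\xi$, the invariant-domain induction, and the self-avoiding-walk tree reduction --- is precisely the structure of the paper's proof, and you have honestly identified the construction of $\Omega_\xi$ as the crux. In the paper, $\Omega_\xi$ is the cone $C_b = \mathbb{R}_{\ge 0}\cdot I_b$, where $I_b$ is the shortest arc of $\partial\mathbb D$ from $1$ to the attracting fixed point of $f_\xi$. Forward invariance of $C_b$ under $F(R_1,\dots,R_d)=\xi\prod_i g(R_i)$, with $g(R)=(R+b)/(bR+1)$, is proved geometrically: $g$ sends the bounding half-line of $C_b$ into a circle through $b$, $1/b$, and $g(e^{i\phi_0})$ that meets $\partial\mathbb D$ orthogonally, so the argument of $g$ is extremal on $\partial\mathbb D$; one then passes to logarithms and uses convexity of $\log C_b$. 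You do need a full cone rather than an arc because the boundary conditions in the SAW tree push ratios to $0$ and $\infty$, which live off the circle. (Also, a small sign slip: with $R_T=Z_T^+/Z_T^-$ your Möbius factor should be $(z+b)/(bz+1)$, not its reciprocal.)

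Two further gaps would derail a naive completion of your sketch. First, you restrict the partial recursion to $1\le k\le d$, but the self-avoiding-walk tree of $G\in\mathcal{G}_{d+1}$ rooted at the base vertex has root degree up to $d+1$, and $C_b$ is \emph{not} forward invariant under the $(d+1)$-fold product. What is true, and requires a separate argument, is that $g(R_{d+1})\cdot F(R_1,\dots,R_d)\notin\mathbb{R}_{<0}$ for $R_1,\dots,R_{d+1}\in C_b$ --- this still implies $R_G\ne -1$, which is all that is needed. One must additionally verify that the denominators $Z_{G,\sigma_{v,0}}$, $Z_{G,\sigma_{v,1}}$ never vanish simultaneously, which is a non-trivial induction of its own. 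Second, for part (ii) a Montel-type argument near an arbitrary $\xi_0\in\partial\mathbb D\setminus I(\theta_b)$ is problematic: the non-normality you invoke relies on finding nearby parameters with attracting fixed points, which exist only near $e^{\pm i\theta_b}$; for $\xi_0$ deep in the arc (e.g.\ near $-1$) there are none. The paper instead runs an expansion argument --- raw expansion of $g^d$ when $b\le(d-1)/(d+1)$, and hyperbolicity of the Julia set together with the hyperbolic metric when $(d-1)/(d+1)<b<1$ --- showing that any small parameter interval $[s,t]\subset\partial\mathbb D\setminus I(\theta_b)$ yields $[f_{s,b}^n(s),f_{t,b}^n(t)]$ of unbounded length, forcing some $f_{\xi,b}^n(\xi)=-1$ with $\xi\in[s,t]$. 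A Montel argument of the type you propose is what the paper does use in the anti-ferromagnetic case, where indeed only accumulation at the two boundary points $e^{\pm i\alpha_b}$ is claimed, not density on the whole arc.
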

The dependence of $\theta_b$ on $b$ is given explicitly in (the proof of) Lemma~\ref{lem:derivative 1}.
For now we remark that as $b\to \frac{d-1}{d+1}$, $\theta_b\to 0$ and as $b\to 1$, $\theta_b\to \pi$.

We remark that part (ii) has recently been independently proved by Chio, He, Ji, and Roeder~\cite{Roederetal18}.
They focus on the class of Cayley trees and obtain a precise description of the limiting behaviour of the zeros of the partition function of the Ising model.

We recall that in the anti-ferromagnetic case the parameters $\xi$ for which $Z_G(\xi,b) = 0$ do not need to lie on the unit circle. For temperatures above the critical temperature, which corresponds to $b \in (1, \frac{d+1}{d-1})$ in our setting, the existence of a zero-free disk normal to the unit circle containing the point $\xi = +1$ was proved by Lieb and Ruelle \cite{LR}. Here we describe the maximal disk that can be obtained:

\begin{mainb}[anti-ferromagnetic case]\label{thm:main2}
Let $d\in \mathbb{N}_{\geq 2}$ and let $b\in (1, \frac{d+1}{d-1})$. Then there exists $\alpha_b\in (-\pi,\pi)$ such that the following holds:
\begin{itemize}
\item[(i)]  for any $\xi\in I(\alpha_b)$, any $r\geq 0$ and any graph $G\in \mathcal{G}_{d+1}$ we have $Z_G(r\cdot \xi,b)\neq 0$;
\item[(ii)] the set $\{\xi\in \mathbb{C} \mid Z_G(\xi,b)=0 \text{ for some }G\in \mathcal{G}_{d+1}\}$ accumulates on $e^{i\alpha_b}$ and $e^{-i\alpha_b}$.
\end{itemize}
\end{mainb}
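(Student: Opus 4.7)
The plan is to translate the problem to complex dynamics by introducing the tree-recursion map $f_\xi(z) := \xi\bigl(\tfrac{z+b}{bz+1}\bigr)^{d}$, which governs the ratio $R_T := Z_T^+/Z_T^-$ of root-marked partition functions on a rooted tree $T$ with $d$-fold branching (here $Z_T^{\pm}$ denote the partition sums conditioned on the root being, respectively, in or out of $U$). Non-vanishing of $Z_G(\xi,b)$ for $G\in\mathcal G_{d+1}$ reduces, via a standard self-avoiding-walk / computation-tree argument, to showing that the iterates of $f_\xi$ arising in such a tree (together with the $(d{+}1)$-fold version at the root) stay away from $-1$. Under the hypothesis $b<(d+1)/(d-1)$, the map $f_1$ has the positive real attracting fixed point $z=1$ with multiplier $-d(b-1)/(b+1)$ of modulus strictly less than $1$; in particular $f_1$ admits an immediate basin $V_0\subset\mathbb C\setminus\{-1\}$, so $Z_G(1,b)\neq 0$ for every $G\in\mathcal G_{d+1}$.

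For part (i), I would extend this picture along each complex ray $\{re^{i\theta}:r\ge 0\}$ of small enough angle $|\theta|$. A direct calculation on the real axis shows that for $r\in[0,\infty)$ the unique positive fixed point $z_0(r)$ of $f_r$ traces an increasing curve from $0$ to $\infty$, and that the multiplier $\mu(r):=f_r'(z_0(r))$ has modulus uniformly bounded by $d(b-1)/(b+1)<1$. I would then perturb: for each small $\theta$, construct (possibly $r$-dependent) open sets $V_{r,\theta}\subset\mathbb C\setminus\{-1\}$ containing $re^{i\theta}$ and satisfying $f_{re^{i\theta}}(V_{r,\theta})\subseteq V_{r,\theta}$, obtained as small tubular neighbourhoods of the analytically continued fixed-point curve $r\mapsto z_0(re^{i\theta})$. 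The critical angle $\alpha_b$ is then defined as the supremum of $\theta>0$ for which this construction survives uniformly in $r\ge 0$; concretely, it is the first angle at which the multiplier $\mu_\theta(r)$ attains modulus $1$ for some $r$, or at which the invariant tubes first collide with $-1$.

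For part (ii), at $\theta=\pm\alpha_b$ the invariant picture breaks down by the very definition of $\alpha_b$: either the fixed point becomes indifferent, or the invariant region abuts $-1$. Either scenario supplies an orbit of $f_{re^{i\alpha_b}}$ that approaches $-1$ at some finite depth. Combined with a transversality observation (the dependence of $f_\xi$ on $\xi$ is generically non-degenerate at the critical dynamical event), one can solve $R_{T_n}(\xi_n)=-1$ for rooted trees $T_n$ of growing depth $n$ and parameters $\xi_n$ converging to $e^{i\alpha_b}$, producing the desired accumulation. The analogous construction near $e^{-i\alpha_b}$ follows by complex conjugation symmetry.

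The main obstacle is the invariant-region construction of the second paragraph: a single open set in $\mathbb C$ cannot work for the whole ray, because $z_0(r)$ ranges over an unbounded curve as $r$ sweeps $[0,\infty)$. One must therefore allow an $r$-dependent family of sets, carefully controlling their widths against the modulus of $\mu_\theta(r)$, and verifying that their distance from $-1$ remains positive as $r\to 0$ and as $r\to\infty$. Matching the threshold $\alpha_b$ so that (i) holds up to but not past it, while (ii) gives accumulation exactly at it, requires identifying precisely which dynamical event first occurs along the ray, and this matching is the crux of the argument.
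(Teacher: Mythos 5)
Your high-level skeleton (tree recursion $f_\xi$, reduction via the self-avoiding-walk tree, dynamics on $\partial\mathbb{D}$) matches the paper, but the two technical hearts of the proof are missing, and one of them would derail your proposed construction.

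\textbf{Part (i): you need a cone, not a tube.} You correctly identify that the difficulty is uniformity in $r$ along the ray $\{re^{i\theta}:r\geq 0\}$, and you propose $r$-dependent ``tubular neighbourhoods'' of the fixed-point curve $z_0(re^{i\theta})$. This is much harder than necessary and the matching at $r\to 0,\infty$ is genuinely delicate (as you note). The paper sidesteps this entirely: it first constructs, for $\xi$ on the unit circle, a forward-invariant closed circular arc $I_b\subset\partial\mathbb{D}$ (with endpoints $1$ and $\xi$), and then takes the $\mathbb{R}_{\geq 0}$-\emph{cone} $C_b$ generated by $I_b$. Because $f_{r\xi,b}(R)=r\cdot f_{\xi,b}(R)$ and $C_b$ is scale-invariant, forward-invariance of $C_b$ under $f_{\xi,b}$ ($|\xi|=1$) immediately gives forward-invariance under $f_{r\xi,b}$ for \emph{every} $r\geq 0$ at once. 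The hard part is then showing $f_{\xi,b}(C_b)\subseteq C_b$ for $|\xi|=1$, which is a clean geometric argument using the M\"obius map $g$ and the circle through $b$, $1/b$ and $g(\xi)$, plus a $\log$-convexity argument for the multivariate map $F_{\xi,b}$, and a separate bound handling the degree-$(d+1)$ root. None of this requires following the attracting fixed point in $r$.

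\textbf{Identification of $\alpha_b$ is wrong / unresolved.} You write that $\alpha_b$ is ``the first angle at which the multiplier attains modulus $1$ for some $r$, or at which the invariant tubes first collide with $-1$,'' and later concede that ``identifying precisely which dynamical event first occurs\ldots is the crux.'' It is, and the answer is not the parabolic event. The parabolic parameter is $\theta_b>\alpha_b$. In the anti-ferromagnetic ($b>1$) regime, $f$ is orientation-\emph{reversing} on $\partial\mathbb{D}$, and the critical event defining $\alpha_b$ is that the point $+1$ hits the \emph{boundary} of the immediate attracting interval of the attracting fixed point (equivalently, $\xi_0=e^{i\alpha_b}$ is the parameter where $1$ becomes a repelling periodic point with $f_{\xi_0,b}(\xi_0)=1$). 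For $|\vartheta|<\alpha_b$ the point $1$ is interior to the immediate attracting interval; for $\alpha_b<|\vartheta|<\theta_b$ it is not (even though an attracting fixed point still exists). Your multiplier-based threshold would overshoot.

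\textbf{Part (ii) needs the activity/Montel argument, not ``transversality.''} The paper's argument is: at $\xi_0=e^{i\alpha_b}$ the point $+1$ lies on a repelling cycle, so the family $g_n(\xi)=f_{\xi,b}^n(+1)$ cannot be normal near $\xi_0$ (it converges to the attracting fixed point on one side but stays away on the other). Then a Montel-type argument with three preimages of $-1$ (moved to fixed points by a holomorphically varying M\"obius normalization) forces some $g_n(\xi)$ to hit $-1$ for $\xi$ arbitrarily close to $\xi_0$. Your ``transversality observation'' is not a substitute for this; the generic non-degeneracy of $\xi\mapsto f_\xi$ does not by itself give parameters where an orbit \emph{exactly} lands on $-1$, nor does it give accumulation precisely at $e^{\pm i\alpha_b}$.

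In short: the skeleton is right, but the two load-bearing ideas — (a) use a cone, not fixed-point-following tubes, to get uniformity in $r$; (b) $\alpha_b$ is the ``$+1$ exits the immediate attracting interval'' threshold, strictly inside the parabolic threshold, and part (ii) is a normal-family activity argument — are absent or misidentified.
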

The value of $\alpha_b$ can again be explicitly expressed in terms of $b$, see Figure \ref{fig:lambda01} for an illustration depicting $\alpha_b$.


Another recent related contribution to the Lee-Yang program is due to Liu, Sinclair and Srivastava~\cite{LSS18}, who showed that for $\xi=1$ and $d\geq 2$ there exists an open set $B\subset \mathbb C$ containing the interval $(\frac{d-1}{d+1},\frac{d+1}{d-1})$ such that for any $G\in \mathcal G_{d+1}$ and $b\in B$, $Z_G(1,b)\neq 0$.

\subsection{Motivation}
The motivation for studying the location of zeros of partition functions traditionally comes from statistical physics.
Since this is well known and since many excellent expositions exist, see for example~\cite[Section 7.4]{Ba17}, we choose not discuss the physical background here.
However, recently there has also been interest in understanding the location of zeros from the perspective of theoretical computer science, more precisely from the field of approximate counting.

In theoretical computer science it is known that the exact computation of partition functions, such as of the Ising model and the hardcore model, or the number of proper $k$-colorings of a graph $G$, generally is a \#P-hard problem (i.e., it is as hard as computing the number of Hamiltonian cycles in a graph, see~\cite{V79a,V79b,AB09} for detailed information on the class \#P).
For this reason much effort has been put in designing efficient approximation algorithms.
Traditionally such algorithms are randomized and are based on Markov chains, see \cite{J03}.
In particular, Jerrum and Sinclair \cite{JS93} showed that for all $0<b<1$ and $\xi>0$ the partition function of the Ising model can be efficiently approximated on any graph $G$.
Another approach is based on decay of correlations and was initiated by Weitz~\cite{W06}. This leads to deterministic approximation algorithms.
Using decay of correlations, Sinclair, Srivastava and Thurley \cite{SST14} gave an efficient deterministic approximation algorithm for computing the Ising partition function on graphs of maximum degree at most $d+1$ $(d\geq 2)$ when $\xi=1$ and  $b \in (\frac{d-1}{d+1},1]$.

Recently a new approach for obtaining deterministic approximation algorithms was proposed by Barvinok, see \cite{Ba17}, based on truncating the Taylor series of the logarithm of the partition functions in regions where the partition function is nonzero.
It was shown by Patel and the second author in \cite{PaR17} that this approach in fact yields polynomial time approximation algorithms when restricted to bounded degree graphs.
Combining the approach from~\cite{PaR17} (cf. \cite{LSS17}) with Theorem A and the original Lee-Yang result, we immediately obtain the following as a direct corollary:
\begin{corollary}
Let $d\geq 2$, let $b \in (\frac{d-1}{d+1},1]$ and let $\xi \in I(\theta_b)$, for $\theta_b$ as in Theorem A.
Then for any $\varepsilon>0$ there exists an algorithm that, given an $n$-vertex graph $G$ of maximum degree at most $d+1$, computes a relative $\varepsilon$-approximation\footnote{A relative $\epsilon$-approximation to a nonzero complex number $x=e^{a}$ is a nonzero complex number $z=e^{b}$ such that $|a-b|<\varepsilon$.} to $Z_G(\xi,b)$ in time polynomial in $n/\varepsilon$.
\end{corollary}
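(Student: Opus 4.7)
The plan is to combine Theorem A with the Lee--Yang theorem to produce an open simply connected zero-free region for the class $\mathcal{G}_{d+1}$ containing both $0$ and $\xi$, and then to invoke the polynomial interpolation method of Patel and Regts~\cite{PaR17}. Concretely, by Lee--Yang every zero of $Z_G(\cdot,b)$ lies on $\partial\mathbb{D}$ when $b\in[-1,1]$, and by Theorem A no such zero lies in $I(\theta_b)$. Hence the open set
\[
U := \mathbb{C}\setminus(\partial\mathbb{D}\setminus I(\theta_b))
\]
is zero-free for every $G\in\mathcal{G}_{d+1}$. Its complement is a proper closed arc of $\partial\mathbb{D}$, so $U$ is simply connected and contains both $0$ and the target point $\xi$. (The boundary case $b=1$ is trivial since then $Z_G(\xi,1)=(1+\xi)^{|V|}$.)

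Next I would apply Barvinok's interpolation scheme on $U$. A holomorphic branch of $p_G:=\log Z_G(\cdot,b)$ is well defined on $U$ because $U$ is simply connected and $Z_G(0,b)=1$. I would choose a conformal map $\varphi:\mathbb{D}\to U$ with $\varphi(0)=0$ and $\varphi(r_0)=\xi$ for some $r_0\in(0,1)$ that depends only on the distance from $\xi$ to $\partial U$. The degree-$k$ Taylor polynomial $T_k$ of $p_G\circ\varphi$ at $0$ then satisfies
\[
|p_G(\xi) - T_k(r_0)| \;\le\; C\,\|p_G\circ\varphi\|_\infty\cdot \frac{r_0^{k+1}}{1-r_0},
\]
and since $\deg Z_G\le n$ forces $\|p_G\circ\varphi\|_\infty=O(n)$ on compacta of $U$, taking $k=O(\log(n/\varepsilon))$ yields the desired relative accuracy $\varepsilon$.

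To finish I would invoke the main theorem of~\cite{PaR17}: for graphs in $\mathcal{G}_{d+1}$, the first $k$ coefficients of $Z_G(\cdot,b)$ around any fixed basepoint can be computed in time $n\cdot\exp(O(k))$ by enumerating connected induced subgraphs of size at most $k$. Composing these with the graph-independent Taylor data of $\log$ and of $\varphi$ produces $T_k(r_0)$ within the same time bound, giving overall running time polynomial in $n/\varepsilon$ once $k=O(\log(n/\varepsilon))$ is substituted.

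The hard part is not conceptual but technical: I have to produce a conformal parametrization $\varphi:\mathbb{D}\to U$ with $r_0$ bounded away from $1$ (quantitatively in terms of $\theta_b$ and the distance from $\xi$ to the complementary arc) and whose low-order Taylor coefficients can be computed to sufficient precision in constant time. Both issues are by now standard inputs within the Barvinok--Patel--Regts paradigm (see also~\cite{LSS17}), which is precisely why the authors label the corollary as immediate.
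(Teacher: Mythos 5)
Your overall strategy is the right one and matches what the paper has in mind: combine Lee--Yang (zeros on $\partial\mathbb{D}$) with Theorem A (no zeros in $I(\theta_b)$) to get a zero-free region, then run the Barvinok interpolation scheme with the Patel--Regts coefficient computation. However, there is a genuine error in the topological step. The set $U := \mathbb{C}\setminus\bigl(\partial\mathbb{D}\setminus I(\theta_b)\bigr)$ is \emph{not} simply connected: its complement in $\hat{\mathbb{C}}$ is the closed arc $A:=\partial\mathbb{D}\setminus I(\theta_b)$ together with the isolated point $\infty$, which is disconnected. Concretely, a circle of radius $2$ has winding number $1$ around every point of $A$ and hence cannot be contracted inside $U$. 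As a consequence, $\log Z_G(\cdot,b)$ does \emph{not} admit a single-valued holomorphic branch on $U$: traversing a large circle once changes any local branch by $2\pi i\cdot\deg Z_G$. For the same reason there is no conformal map $\varphi:\mathbb{D}\to U$.

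The fix is routine but must be made. Replace $U$ by a genuinely simply connected zero-free domain containing $0$ and $\xi$. Two standard choices: (a) the slit domain $\mathbb{C}\setminus\{\,r\zeta : \zeta\in A,\ r\ge 1\,\}$, on which $\log Z_G(z) = \sum_j \log(1-z/\zeta_j)$ is single-valued (each summand is holomorphic off the ray $\{t\zeta_j: t\ge 1\}$, and every $\zeta_j\in A$); or (b) a Jordan domain of the form $\mathbb{D}\cup N$, where $N$ is a small open neighbourhood of the closed sub-arc $\overline{I(\theta')}$ for some $|\mathrm{Arg}\,\xi|<\theta'<\theta_b$. Either works and is conformally equivalent to $\mathbb{D}$, with $0\mapsto 0$ and $\xi$ at bounded hyperbolic distance from $0$, the bound depending only on $\theta_b$ and $\mathrm{dist}(\xi,A)$ — which is exactly the quantitative dependence needed to make $r_0$ bounded away from $1$ and to keep $\|p_G\circ\varphi\|_\infty=O(n)$ (for the latter you must restrict $\varphi$ to map into a region bounded away from the zeros, which choice (b) gives automatically once $N$ is kept at positive distance from $A$). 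With that correction, the rest of your argument — truncating the Taylor series of $p_G\circ\varphi$ at order $k=O(\log(n/\varepsilon))$ and computing the first $k$ coefficients of $Z_G$ in time $n\cdot\exp(O(k))$ via the bounded-degree connected-subgraph enumeration of \cite{PaR17} — is correct and is the intended proof; see also \cite{LSS17} where precisely this circle/arc geometry is handled.
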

An identical statement holds for $b>1$, except there it does not follow directly from Theorem B. One also needs that for $\xi$ in a small disk around zero the partition function does not vanish, see Remark~\ref{rem:zero}.

Given the recent progress on understanding the complexity of approximating independence polynomial at nonpositive fugacities based on connections to complex dynamics due to Bezakov\'a, Galanis, Goldberg and \v{S}tefankovi\v{c}~\cite{BGGS2017}, a natural question that arises is the following:
\begin{question}
Let $d\geq 2$. Is it NP-hard (or maybe even \#P-hard) to approximate the partition function of the Ising model  on graphs of maximum degree at most $d+1$ when $b \in (\frac{d-1}{d+1},1)$ and $\xi\in \partial \mathbb D\setminus I(\theta)$?
In fact even the hardness of approximating the partition function of the Ising model for  $\xi\in \partial \mathbb D$ and $0\leq b\leq \frac{d-1}{d+1}$ is not known. See~\cite{GG17} for some related hardness results.
\end{question}

\subsection{Approach}
Our approach to proving our main theorems is to make use of the theory of complex dynamics and combine this with some ideas from the approximate counting literature. The value of the partition function of a Cayley tree can be expressed in terms of the value of the partition function for the Cayley tree with one fewer level, inducing the iteration of a univariate rational function. Understanding the dynamical behaviour of this function leads to understanding of the location of the zeros of the partition function for Cayley trees. The same approach forms the basis of \cite{BM97,BG01,Roederetal18,LSS18}.
To prove our result for general bounded degree graphs, we use the tree of self avoiding walks, as defined by Weitz~\cite{W06}, to relate the partition function of a graph to the partition function of a tree with additional boundary conditions.
This relationship no longer gives rise to the iteration of a univariate rational function, but with some additional effort we can still transfer the results for the univariate case to this setting.

We remark that a similar approach was used by the authors in~\cite{PR17} to answer a question of Sokal concerning the location of zeros of the independence polynomial, a.k.a., the partition function of the hard-core model.

Complex dynamics has also been used to study the location of zeros the chormatic polynomials of certain trees by Royle and Sokal. See the appendix of the arxiv version of \cite{RS15}.

\subsubsection{Organization}
This paper is organized as follows.
In the next section we will define ratios of partition functions, and prove that for Cayley trees this gives rise to the iteration of a univariate rational function. In Section~\ref{sec:complex dynamics} we employ basic tools from complex dynamics to analyze this iteration.
In particular a proof of part (ii) of our main theorems will be given there.
Finally, in Section~\ref{sec:zero-free} we collect some additional ideas and provide a proof of part (i) of our main theorems.

\section{Ratios}\label{sec:ratios}
At a later stage, in Section~\ref{sec:zero-free}, it will be convenient to have a multivariate version of the Ising partition function defined for a graph $G=(V,E)$, complex numbers $(\xi_v)_{v\in V}$, and $b\in \mathbb{C}$ as follows:
\[
Z_G((\xi_v),b):=\sum_{U\subseteq V}\prod_{u\in U}\xi_u \cdot b^{|\delta(U)|}.
\]
The two-variable version is obtained from this version by setting all $\xi_v$ equal.
We will often abuse notation and just write $Z_G(\xi,b)$ for the multivariate version.

Let $G=(V,E)$ be a graph and let $X\subseteq V$.
We call any map $\tau:X\to \{0,1\}$ a \emph{boundary condition on $X$}.
Let now $\tau$ be a boundary condition on $X\subseteq V$. We say that $U\subseteq V$ is compatible with $\tau$ if for each vertex $u\in X$ with $\tau(u)=1$ we have $u\in U$ and for each vertex $u\in X$ with $\tau(u)=0$ we have $u\notin U$. We shall write $U\sim \tau$ if $U$ is compatible with $\tau$.
We define
\[
Z_{G,\tau}(\xi,b)=\sum_{\substack{U\subseteq V\\U\sim \tau}}\prod_{u\in U}\xi_u \cdot b^{|\delta(U)|}.
\]
Fix a vertex $v$ of $G$. We let $\tau_{v,0}$, and $\tau_{v,1}$ respectively, denote the boundary conditions on $X\cup \{v\}$ where $v$ is set to $0$, and $1$ respectively.
In case $v$ is contained in $X$, we consider $X\cup\{v\}$ as a multiset to make sure $\tau_{v,0}$ and $\tau_{v,1}$ are well defined.
For one element $\sigma\in \{\tau_{v,0},\tau_{v,1}\}$ the vertex $v$ gets two different values, in which case no set $U\subseteq V$ is compatible with $\sigma$ and consequently we set $Z_{G,\sigma}=0$.

We denote the extended complex plane, $\mathbb{C}\cup \{\infty\}$, by $\hat{\mathbb{C}}$.
We introduce the ratio  $R_{G,\tau,v}\in \hat{\mathbb {C}}$, by
\begin{equation}\label{eq:ratio}
R_{G,\tau,v}:=\left \{\begin{array}{ll}
\infty&\text{ if }Z_{G,\tau_{v,0}}(\xi,b)= 0,
\\
\frac{Z_{G,\tau_{v,1}}(\xi,b)}{Z_{G,\tau_{v,0}}(\xi,b)}& \text{ otherwise}.
\end{array}\right.
\end{equation}
We remark that $R_{G,\tau,v}$ equals a rational function in $\xi$, except perhaps for values of $\xi$ for which $Z_{G,\tau_{v,0}}$ and $Z_{G,\tau_{v,1}}$ vanish simultaneously. We will however prove in Lemma \ref{lemma27} that this can never happen for the $\xi$ we care about, and therefore it is safe to think of $R$ as a rational function in $\xi$.

If no boundary condition is present, or if it is clear from the context, we just write $R_{G,v}$ for the ratio.
We have the following trivial, but important, observation:
\begin{align}\label{eq:important observation}
\text{ if }&Z_{G,\tau_{v,0}}\neq 0 \text{, or }Z_{G,\tau_{v,1}}\neq 0 \text{, then:}\nonumber
\\
&R_{G,\tau,v}\neq -1 \text{ if and only if } Z_{G,\tau}\neq 0.
\end{align}

The following lemma shows how to express the ratio for trees in terms of ratios of smaller trees.
\begin{lemma}\label{lem:ratio}
Let $G=(V,E)$ be a tree with boundary condition $\tau$ on $X\subseteq V$.
Let $u_1,\ldots,u_d$ be the neighbors of $v$ in $G$, and let $G_1,\ldots,G_d$ be the components of $G-v$ containing $u_1,\ldots,u_d$ respectively.
We just write $\tau$ for the restriction of $\tau$ to $X\cap V(G_i)$ for each $i$.
For $i=1,\ldots, d$ let $\tau_{i,0}$ and $\tau_{i,1}$ denote the respective boundary conditions obtained from $\tau$ on $(X\cup\{u_i\})\cap V(G_i)$ where $u_i$ is set to $0$ and $1$ respectively.
If for each $i$, not both $Z_{G_i,\tau_{i,0}}(\xi,b)$ and $Z_{G,\tau_{i,1}}(\xi,b)$ are zero, then
\begin{equation}
R_{G,\tau,v}=\xi_v\prod_{i=1}^d\frac{R_{G_i,\tau, u_i}+b}{b R_{G_i,\tau, u_i}+1}.
\end{equation}
\end{lemma}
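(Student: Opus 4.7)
The plan is a direct computation exploiting the tree structure to factorize both $Z_{G,\tau_{v,1}}$ and $Z_{G,\tau_{v,0}}$ as products over the subtrees $G_i$, and then to take the quotient.

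First I would classify subsets $U\subseteq V$ compatible with $\tau_{v,1}$ (respectively $\tau_{v,0}$) by their restrictions $U_i:=U\cap V(G_i)$. Because $G$ is a tree, removing $v$ disconnects the subtrees $G_i$, so the only edges between the pieces of $U$ are the edges $\{v,u_i\}$, and
\[
\delta(U)=\{\{v,u_i\}\mid \mathbf{1}_U(v)\neq \mathbf{1}_U(u_i)\}\;\sqcup\;\bigsqcup_{i=1}^d \delta_{G_i}(U_i).
\]
In the case $U\sim \tau_{v,1}$ we have $v\in U$, so the edge $\{v,u_i\}$ contributes to $\delta(U)$ precisely when $u_i\notin U_i$. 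Splitting the sum defining $Z_{G,\tau_{v,1}}$ according to each value $\mathbf{1}_{U_i}(u_i)\in\{0,1\}$, the contributions over different $i$ decouple and yield
\[
Z_{G,\tau_{v,1}}(\xi,b)=\xi_v\prod_{i=1}^d\bigl(Z_{G_i,\tau_{i,1}}(\xi,b)+b\cdot Z_{G_i,\tau_{i,0}}(\xi,b)\bigr).
\]
The same decomposition for $U\sim\tau_{v,0}$ (now $\{v,u_i\}\in\delta(U)$ iff $u_i\in U_i$) gives
\[
Z_{G,\tau_{v,0}}(\xi,b)=\prod_{i=1}^d\bigl(b\cdot Z_{G_i,\tau_{i,1}}(\xi,b)+Z_{G_i,\tau_{i,0}}(\xi,b)\bigr).
\]

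Next I would take the ratio. When every $Z_{G_i,\tau_{i,0}}$ is nonzero, dividing each factor in the numerator and denominator by $Z_{G_i,\tau_{i,0}}$ yields
\[
R_{G,\tau,v}=\xi_v\prod_{i=1}^d\frac{R_{G_i,\tau,u_i}+b}{b\,R_{G_i,\tau,u_i}+1},
\]
as desired. The remaining issue, which is the only mildly delicate point, is the convention at $\infty$: if $Z_{G_i,\tau_{i,0}}=0$ for some $i$ then by hypothesis $Z_{G_i,\tau_{i,1}}\neq 0$, so $R_{G_i,\tau,u_i}=\infty$; the $i$-th factor of the numerator and denominator both reduce to a nonzero multiple of $Z_{G_i,\tau_{i,1}}$, giving a factor $1/b$, which agrees with the limiting value of $(R+b)/(bR+1)$ as $R\to\infty$. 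One also verifies that, under the lemma's hypothesis, at least one of $Z_{G,\tau_{v,0}}$ and $Z_{G,\tau_{v,1}}$ is nonzero, so $R_{G,\tau,v}$ is well-defined in $\hat{\mathbb C}$.

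The only real obstacle is bookkeeping around these degenerate cases and making sure that when $u_i\in X$ the notation $\tau_{i,1}$ (setting $u_i$ to a possibly conflicting value) behaves as intended, namely $Z_{G_i,\tau_{i,1}}=0$ whenever $\tau(u_i)$ already equals $0$; this is exactly the convention fixed just before (\ref{eq:ratio}), so it slots in without extra work. Beyond that the argument is a one-line factorization enabled by the tree structure.
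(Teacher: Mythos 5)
Your proposal is correct and matches the paper's proof essentially step for step: the same factorization of $Z_{G,\tau_{v,1}}$ and $Z_{G,\tau_{v,0}}$ into products over the components $G_i$ of $G-v$, the same termwise division by $Z_{G_i,\tau_{i,0}}$, and the same $1/b$ check when $Z_{G_i,\tau_{i,0}}=0$ (so $R_{G_i,\tau,u_i}=\infty$). The only difference is that you spell out the edge-boundary decomposition $\delta(U)$ justifying the product formulas, which the paper states without derivation.
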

\begin{proof}
We can write
\[
Z_{G,\tau_{v,1}}=\xi_v \prod_{i=1}^d\left(Z_{G_i,\tau_{i,1}}+b Z_{G_i,\tau_{i,0}} \right),
\text{ and }
Z_{G,\tau_{v,0}}=\prod_{i=1}^d\left(b Z_{G_i,\tau_{i,1}}+Z_{G_i,\tau_{i,0}} \right).\]
Let us fix $i\in \{1,\ldots,d\}$.
Suppose first that $Z_{G_i,\tau_{i,0}}\neq 0$. Then we can divide the numerator and denominator by $Z_{G_i,\tau_{i,0}}$ to obtain
\begin{equation}\label{eq:ratio i}
\frac{Z_{G_i,\tau_{i,1}}+b Z_{G_i,\tau_{i,0}}}{b Z_{G_i,\tau_{i,1}}+Z_{G_i,\tau_{i,0}}}=\frac{R_{G_i,\tau,u_i}+b}{b R_{G_i,\tau,u_i}+1}.
\end{equation}
If $Z_{G_i,\tau_{i,0}}=0$, then on the left-hand side of \eqref{eq:ratio i} we obtain $1/b$ while on the right-hand side, plugging in $R_{G_i,\tau,u_i}=\infty$, we also obtain $1/b$.
Therefore this expression is also valid when $Z_{G_i,\tau_{i,0}}=0$.
This finishes the proof.
\end{proof}

Let us now specialize the previous lemma to a special class of (rooted) trees.
Fix $d\in \mathbb{N}_{\geq 2}$.
The tree $T_{0,d}$ consists of single vertex, its root. For $k\geq 1$, the tree $T_{k,d}$ consists of a root vertex $v$ of degree $d$ with each edge incident to $v$ connected to the root of a copy of $T_{k-1,d}$.
This class of trees is also known as the class of (rooted) \emph{Cayley trees}.
If $d$ is clear from the context, we just write $T_k$ instead of $T_{k,d}$.

Define $f=f_{\xi,b}:\hat{\mathbb C} \to \hat{\mathbb C}$ for $R\in \hat{\mathbb C}$ by
\begin{equation}\label{eq:define f}
f(R):= \xi \cdot \left(\frac{R + b}{b R + 1}\right)^d.
\end{equation}
Let us moreover, define $g:\hat{ \mathbb{C}}\to \hat{\mathbb  C}$ by $g: R\mapsto \frac{R+b}{b R+1}$ for $R\in \hat{ \mathbb C}$ so that $f(R)=\xi g(R)^d$.
Since $b$ is real, it follows that the M\"obius transformation $g$ preserves $\partial \mathbb{D}$.
If $\xi\in \partial\mathbb{D}$ the same holds for $f$.
\begin{corollary}\label{cor:tree to dynamics}
Let $d\in \mathbb{N}_{\geq 2}$ and let $\xi\in \partial \mathbb{D}$ and $b\in \mathbb R$.
Then the orbit of $1$ under $f=f_{\xi,b}$ avoids $-1$ if and only if $Z_{T_k}\neq 0$ for all $k$.
\end{corollary}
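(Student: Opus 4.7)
The plan is to establish by induction on $k$ the identification $R_{T_k, v} = f^{k+1}(1)$, and then invoke the observation \eqref{eq:important observation} to translate non-vanishing of $Z_{T_k}$ into a condition on the orbit of $1$.

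First I would verify the base case $k=0$: the tree $T_0$ consists of a single vertex $v$, so directly $Z_{T_0, \tau_{v,0}}=1$ and $Z_{T_0, \tau_{v,1}}=\xi$, giving $R_{T_0, v} = \xi$. Since $g(1) = (1+b)/(b+1) = 1$, we have $f(1) = \xi \cdot 1^{d} = \xi$, so $R_{T_0, v} = f(1)$. For the inductive step I apply Lemma~\ref{lem:ratio} to $T_k$ with $v$ the root: by construction the $d$ components of $T_k - v$ are each isomorphic to $T_{k-1}$, with the neighbor $u_i$ of $v$ playing the role of the root of $T_{k-1}$. Symmetry and the inductive hypothesis give $R_{G_i, u_i} = R_{T_{k-1}, v} = f^k(1)$ for every $i$, so Lemma~\ref{lem:ratio} yields
\[
R_{T_k, v} = \xi \cdot g\!\left(f^{k}(1)\right)^{d} = f\!\left(f^{k}(1)\right) = f^{k+1}(1).
\]
To legitimately apply Lemma~\ref{lem:ratio}, I need that $Z_{T_{k-1}, \tau_{v,0}}$ and $Z_{T_{k-1}, \tau_{v,1}}$ are not simultaneously zero, which is exactly the content of Lemma~\ref{lemma27}.

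With the identification $R_{T_k, v} = f^{k+1}(1)$ in hand for all $k \geq 0$, I finish using \eqref{eq:important observation}: since by Lemma~\ref{lemma27} at least one of $Z_{T_k, \tau_{v,0}}, Z_{T_k, \tau_{v,1}}$ is nonzero, we have $Z_{T_k} \neq 0$ if and only if $R_{T_k, v} \neq -1$. Combined with the trivial observation $f^0(1) = 1 \neq -1$, this shows that the orbit of $1$ under $f$ avoids $-1$ precisely when $Z_{T_k} \neq 0$ for every $k \geq 0$. The only non-routine point is the appeal to Lemma~\ref{lemma27} ruling out simultaneous vanishing of the two boundary-conditioned partition functions; everything else is a straightforward inductive translation between the multiplicative recursion for $Z_{T_k}$ on trees and the iteration of the rational map $f$.
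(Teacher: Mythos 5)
Your overall strategy—use Lemma~\ref{lem:ratio} to show $R_{T_k,v}$ is obtained by iterating $f$, then translate via the observation~\eqref{eq:important observation}—is the same as the paper's, and your base-case computation $R_{T_0,v}=\xi=f(1)$ is correct. However, there is a genuine gap in how you justify the inductive step.

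You assert the identity $R_{T_k,v}=f^{k+1}(1)$ unconditionally and justify the applicability of Lemma~\ref{lem:ratio} by appealing to Lemma~\ref{lemma27} to rule out simultaneous vanishing of $Z_{T_{k-1},\tau_{v,0}}$ and $Z_{T_{k-1},\tau_{v,1}}$. But Lemma~\ref{lemma27} is stated and proved only under the standing hypotheses of Section~\ref{sec:zero-free}: there $b$ is fixed in $(\tfrac{d-1}{d+1},1)\cup(1,\tfrac{d+1}{d-1})$ and $\xi=e^{i\vartheta}$ lies in the zero-free arc $I(\theta_b)$ or $I(\alpha_b)$, and its proof goes through Propositions~\ref{prop:good ratios trees} and~\ref{prop:stable region}, which depend on the invariant cone $C_b$ that only exists in that parameter window. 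Corollary~\ref{cor:tree to dynamics}, by contrast, is stated for \emph{all} $b\in\mathbb R$ and \emph{all} $\xi\in\partial\mathbb D$, and the paper relies on this generality (e.g.\ Remark~\ref{rem:dens} applies the corollary with $b\in(0,\tfrac{d-1}{d+1}]$). So Lemma~\ref{lemma27} is simply not available here, and the iteration identity cannot be established unconditionally.

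The correct fix is the one the paper uses: establish the identity only conditionally, separately in each direction of the equivalence. For the forward direction, assume $Z_{T_j}\neq 0$ for all $j$; then $Z_{T_k,\tau_{v,0}}+Z_{T_k,\tau_{v,1}}=Z_{T_k}\neq 0$ shows the two boundary-conditioned partition functions are not both zero, so Lemma~\ref{lem:ratio} applies at every step and gives $R_{T_k,v}=f^{\circ k}(\xi)\neq -1$ by~\eqref{eq:important observation}. For the converse, suppose the orbit avoids $-1$ but $Z_{T_k}=0$ for some minimal $k$; then $Z_{T_{k-1}}\neq 0$ gives "not both zero" at level $k-1$, and the unit-circle location $R_{T_{k-1},v}=f^{\circ (k-1)}(\xi)\in\partial\mathbb D\setminus\{-1\}$ is used to show $Z_{T_k,\tau_{v,0}}=\bigl(Z_{T_{k-1},\tau_{v,0}}+bZ_{T_{k-1},\tau_{v,1}}\bigr)^d\neq 0$, after which~\eqref{eq:important observation} forces $R_{T_k,v}=-1$, a contradiction. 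You should replace the appeal to Lemma~\ref{lemma27} with this local, two-sided argument.
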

\begin{proof}
We note that $f(1)=\xi$, hence we may just as well consider the orbit of $\xi$.
We observe that, as there is no boundary condition, $R_{T_{0},v}=\xi$.
Now suppose that $Z_{T_k}\neq 0$ for all $k\in \mathbb{N}$. Then by \eqref{eq:important observation} we see that $R_{T_{k},v}\neq -1$ for all $k$.
Then since $Z_{T_k,\tau_{v,0}}+Z_{T_k,\tau_{v,1}}=Z_{T_k}\neq 0$, it follows that either $Z_{T_k,\tau_{v,0}}\neq 0$, or $Z_{T_k,\tau_{v,1}}\neq 0$.
By Lemma~\ref{lem:ratio} we obtain
\begin{equation}\label{eq:iteration}
R_{T_{k},v}=f(R_{T_{k-1},v})=\ldots= f^{\circ k}(\xi)
\end{equation}
and hence the orbit of $1$ avoids $-1$.

Conversely, suppose that the orbit of $1$ avoids $-1$, while $Z_{T_{k}}=0$ for some $k$.
Then let $k$ be the smallest integer for which $Z_{T_{k}}=0$. Then we have $Z_{T_k,\tau_{v,0}}\neq 0$.
Indeed, $Z_{T_k,\tau_{v,0}}=(Z_{T_{k-1},\tau_{v,0}}+bZ_{T_{k-1},\tau_{v,1}})^d$ and by assumption, as $Z_{T_{k-1}}\neq 0$ we have that one of $Z_{T_{k-1},\tau_{v,0}}$ and $Z_{T_{k-1},\tau_{v,1}}$ is nonzero. But since
\[
\frac{Z_{T_{k-1},\tau_{v,0}}}{Z_{T_{k-1},\tau_{v,1}}}=R_{T_{k-1},v}=f^{\circ k-1}(\xi)\in \partial \mathbb D\setminus\{-1\},
\]
and therefore $Z_{T_{k-1},\tau_{v,0}}+b{Z_{T_{k-1},\tau_{v,1}}}\neq 0$, as desired.
Now \eqref{eq:important observation} implies that $R_{T_{k}}=-1$: a contradiction.
This finishes the proof.
\end{proof}

This corollary motivates the study of the complex dynamical behaviour of the map $f$ at starting point $1$ (or $\xi$).
We will do this in the next section, returning to general graphs in Section~\ref{sec:zero-free}

\section{Complex dynamics of the map $f_{\xi,b}$}\label{sec:complex dynamics}
Let $d\in \mathbb{N}_{\geq 2}$ and let $b\in \mathbb{R}$.
In this section we study the dynamical behavior of the map $f_{\xi,b}$ for $\xi$ of norm $1$.
It is our aim to prove the following results.

\begin{thm}[Ferromagnetic case]\label{thm:dynamicsF}
Let $d\in \mathbb{N}_{\geq 2}$ and let $b\in (\frac{d-1}{d+1},1)$. There exists $\theta_b\in (0,\pi)$ such that
\begin{itemize}
\item[(i)] for each $\xi=e^{i\vartheta}$ with $\vartheta\in (-\theta_b,\theta_b)$ there exists a closed circular interval $I_b\subset \partial \mathbb{D}$, with $1$ as boundary point, which is forward invariant under $f_{\xi,b}$ and does not contain $-1$.
In particular, the orbit of $R=\xi$ under $f_{\xi,b}$ avoids the point $-1$;
\item[(ii)] The interval $(-\theta_b,\theta_b)$ is maximal: The collection $\{\xi\}\subset \partial \mathbb{D}$ for which the orbit of $R=\xi$ under $f_{\xi,b}$ lands on $-1$ is dense in $\partial \mathbb{D}\setminus(-\theta_b,\theta_b)$.
\end{itemize}
\end{thm}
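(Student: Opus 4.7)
My strategy is to analyze the degree-$d$ real-analytic circle map obtained by restricting $f_{\xi,b}$ to $\partial\mathbb{D}$ when $\xi\in\partial\mathbb{D}$, and then transfer the circle-dynamical picture into a parameter-space statement via Montel's theorem. The M\"obius map $g(R)=(R+b)/(bR+1)$ preserves $\partial\mathbb{D}$ and lifts through $g(e^{i\phi})=e^{i\Phi(\phi)}$ to an orientation-preserving diffeomorphism $\Phi\colon\mathbb{R}\to\mathbb{R}$ with $\Phi(0)=0$ and
\[
\Phi'(\phi)=\frac{1-b^2}{1+2b\cos\phi+b^2},
\]
which is even in $\phi$ and strictly increasing on $[0,\pi]$ from $(1-b)/(1+b)$ to $(1+b)/(1-b)$. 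Writing $\xi=e^{i\vartheta}$, the map $f_{\xi,b}$ on $\partial\mathbb{D}$ lifts to $F_\vartheta(\phi)=\vartheta+d\Phi(\phi)$. Consider the odd auxiliary function $H(\phi):=\phi-d\Phi(\phi)$, whose derivative $H'(\phi)=1-d\Phi'(\phi)$ is strictly decreasing on $[0,\pi]$, is positive at $0$ precisely when $b>(d-1)/(d+1)$, and is negative at $\pi$ for every $d\geq 2$. Hence $H'$ vanishes at a unique $\phi^*_c\in(0,\pi)$, and I set $\theta_b:=H(\phi^*_c)\in(0,\pi)$ as the critical angle.

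\textbf{Part (i).} For $\vartheta\in[0,\theta_b)$ (the case $\vartheta\leq 0$ is symmetric by complex conjugation), the equation $H(\phi)=\vartheta$ has exactly two solutions in $[0,\pi)$: an attracting fixed point $\phi^*\in[0,\phi^*_c)$ of $F_\vartheta$ with multiplier $d\Phi'(\phi^*)<1$, and a repelling fixed point $\phi^{**}\in(\phi^*_c,\pi)$. I take
\[
I_b:=\{e^{i\phi}\colon 0\leq\phi\leq\phi^{**}\},
\]
a closed arc with $1$ as boundary point that excludes $-1$ since $\phi^{**}<\pi$. Forward invariance under $F_\vartheta$ is immediate from monotonicity of $F_\vartheta$, together with $F_\vartheta(0)=\vartheta\geq 0$ and $F_\vartheta(\phi^{**})=\phi^{**}$. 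The starting angle satisfies $\vartheta=H(\phi^*)=\phi^*-d\Phi(\phi^*)<\phi^*$, so $\vartheta$ lies in $[0,\phi^*]$, and because $F_\vartheta(\phi)>\phi$ on $[0,\phi^*)$, the orbit of $\vartheta$ is monotonically increasing and converges to $\phi^*$, staying in $I_b$ and never reaching $\pi$.

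\textbf{Part (ii) and the main obstacle.} For $\vartheta_0\in[\theta_b,\pi]$ (the negative case is symmetric) I consider the holomorphic family $h_n(\xi):=f_{\xi,b}^n(\xi)$ on a complex neighborhood $U$ of $\xi_0=e^{i\vartheta_0}$. If $\{h_n\}$ fails to be normal at $\xi_0$, then Montel's theorem yields that $\{h_n\}$ takes every value in $\hat{\mathbb{C}}$ except at most two on every neighborhood of $\xi_0$; a direct check rules out $-1$ as an exceptional value, giving the claimed density of solutions of $f^n_{\xi,b}(\xi)=-1$. The heart of the argument, and the main obstacle, is to establish non-normality. The underlying mechanism is that the saddle-node bifurcation at $\pm\theta_b$ has destroyed the attracting Fatou component of $f_{\xi_0,b}$ that previously contained $1$, placing $\xi_0$ in the Julia set of $f_{\xi_0,b}$. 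I would prove non-normality either (a) directly, through the chain-rule recurrence $h_{n+1}'(\xi)=g(h_n(\xi))^d+\xi\cdot d\cdot g(h_n(\xi))^{d-1}g'(h_n(\xi))h_n'(\xi)$ combined with the inequality $d\Phi'(\phi)>1$ on the set $\{|\phi|>\phi^*_c\}$, showing that the orbit of $\vartheta_0$ visits this expanding region often enough to force $|h_n'(\xi_0)|\to\infty$; or (b) via the Ma\~n\'e--Sad--Sullivan theorem on holomorphic families, by identifying $\vartheta_0$ with a point in the bifurcation locus of the family $\{f_{\xi,b}\}_\xi$ and applying the general principle that a marked orbit in the Julia set cannot form a normal family. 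Strategy (a) is more elementary but requires delicate control of the circle orbit of $\vartheta_0$ (in particular ruling out attracting cycles of higher period on $\partial\mathbb{D}$); (b) is cleaner but invokes stronger machinery.
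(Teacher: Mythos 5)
Your proof of part (i) is correct, and it is in fact more elementary than the paper's. You set up the circle lift $F_\vartheta(\phi)=\vartheta+d\Phi(\phi)$ and the auxiliary function $H(\phi)=\phi-d\Phi(\phi)$, locate the parabolic threshold $\theta_b=H(\phi^*_c)$ directly from the sign change of $H'$, and identify the two fixed angles $\phi^*<\phi^{**}$ as the solutions of $H=\vartheta$. The interval $[0,\phi^{**}]$ is forward invariant by monotonicity of $F_\vartheta$, and since $\vartheta<\phi^*$ the orbit of $\vartheta$ increases to $\phi^*$ and stays away from $\pi$. This reproduces the paper's Theorem~\ref{thm:im.~at.~interval} and the construction of $I_b$, but replaces the appeal to the Fatou classification, the implicit function theorem, and the abstract structure of the immediate attracting interval with an explicit one-variable monotonicity argument. (Your $I_b$ ends at the repelling fixed point $\phi^{**}$ while the paper's ends at the attracting one $\phi^*$; both choices are forward invariant and omit $-1$, so this is only cosmetic.)

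Part (ii), however, is not complete. You correctly reduce the problem to showing that the holomorphic family $h_n(\xi)=f_{\xi,b}^n(\xi)$ is not normal near $\xi_0=e^{i\vartheta_0}$ for $|\vartheta_0|\ge\theta_b$, and you correctly identify this as the crux, but you then only describe two possible strategies without executing either. Strategy (a) requires controlling how often the orbit of $\vartheta_0$ visits the expanding region $\{|\phi|>\phi^*_c\}$ and, as you yourself note, ruling out attracting cycles of higher period on $\partial\mathbb D$ — this is precisely the step where the real work lies, and it is left undone. Strategy (b) (Ma\~n\'e--Sad--Sullivan / bifurcation locus) asserts the conclusion rather than proving it: to identify $\xi_0$ as a bifurcation parameter of the marked critical orbit you already need to know something like the instability you are trying to establish. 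There is a second, smaller gap even if non-normality were granted: you wave away the possibility that $-1$ is an omitted value of the family $\{h_n\}$ with ``a direct check''. The omitted values can a priori depend on the family, and handling this requires an argument — the paper's treatment of the anti-ferromagnetic case introduces the preimages $z_{-1}(\xi),z_{-2}(\xi)$ of $-1$ and a normalizing M\"obius map $\varphi_\xi$ precisely to close this gap. It is worth noting that the paper does not use Montel at all for the ferromagnetic part (ii); it shows that for $\xi_0$ with $|\vartheta_0|>\theta_b$ both disks are attracting basins, so $J=\partial\mathbb D$ is hyperbolic, and then runs a length-expansion argument (in the hyperbolic metric) on the parameter interval $[s,t]$, exactly as in the easier case $b\le b_c$. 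That expansion argument both sidesteps the normality question and automatically rules out other attracting cycles on the circle (the critical points are $-b,-1/b$, which lie in the two disk basins, so no critical orbit is available for an attracting cycle on $\partial\mathbb D$). If you want to keep the Montel route you must prove non-normality and deal with the omitted-value issue; otherwise, adopting the expansion argument for the $b>b_c$ regime would be the more direct way to finish.
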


\begin{remark}
We can provide an explicit formula for $\theta_b$ as a function of $b$, see Lemma~\ref{lem:derivative 1} and its proof below.
\end{remark}
While a variant of this result was also independently proved in \cite{Roederetal18} we will provide a proof for it, as certain parts and ideas of our proof will be used to prove the next theorem.

\begin{thm}[Anti-ferromagnetic case]\label{thm:dynamicsAF}
Let $d\in \mathbb{N}_{\geq 2}$ and let $b\in (1,\frac{d+1}{d-1})$. There exists $\alpha=\alpha_b\in (0,\pi)$ such that
\begin{itemize}
\item[(i)] for each $\xi=e^{i\vartheta}$ with $\vartheta\in (-\alpha,\alpha)$ the shortest closed circular interval with boundary points $1$ and $\xi$, $I_b$, is forward invariant under $f_{\xi,b}$.
In particular, the orbit of $R=\xi$ under $f_{\xi,b}$ avoids the point $-1$;
\item[(ii)] The interval $(-\alpha,\alpha)$ is maximal: The collection $\{\xi\}\subset \mathbb{C}$, for which the orbit of $R=\xi$ under $f_{\xi,b}$ lands on $-1$ accumulates on $e^{\pm i \alpha}$.
\end{itemize}
\end{thm}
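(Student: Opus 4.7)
The plan is to follow the pattern of Theorem~\ref{thm:dynamicsF}. Restrict $f=f_{\xi,b}$ to $\partial\mathbb D$ and work in angle coordinates: for $\xi=e^{i\vartheta}$ and $R=e^{it}$ one has $f(e^{it})=e^{iF_\vartheta(t)}$ with
\[
F_\vartheta(t) \;=\; \vartheta - 2d\arctan\!\bigl(\beta\tan(t/2)\bigr), \qquad \beta:=\tfrac{b-1}{b+1}\in\bigl(0,1/d\bigr),
\]
the bound $\beta<1/d$ being equivalent to the hypothesis $b<(d+1)/(d-1)$. I would define $\alpha_b\in(0,\pi)$ as the smallest positive root of $F_\alpha(\alpha)=0$, equivalently of $\tan(\alpha/(2d))=\beta\tan(\alpha/2)$; existence follows from $h(\vartheta):=\tan(\vartheta/(2d))-\beta\tan(\vartheta/2)$ satisfying $h(0)=0$, $h'(0)=(1-d\beta)/(2d)>0$ and $h(\vartheta)\to-\infty$ as $\vartheta\to\pi^-$.

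For \emph{part (i)} the key observation is $F'_\vartheta(t)=-d\beta/\bigl(\cos^2(t/2)+\beta^2\sin^2(t/2)\bigr)<0$, so $F_\vartheta$ is strictly decreasing. For $\vartheta\in(0,\alpha_b)$ one has $h(\vartheta)>0$, hence $F_\vartheta(\vartheta)>0$ and $F_\vartheta([0,\vartheta])=[F_\vartheta(\vartheta),\vartheta]\subseteq[0,\vartheta]$. This yields forward-invariance of the short arc from $1$ to $\xi$; the case $\vartheta\in(-\alpha_b,0)$ is symmetric.

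For \emph{part (ii)} set $\xi_0:=e^{i\alpha_b}$. By the very definition of $\alpha_b$ one has $f_{\xi_0}(1)=\xi_0$ and $f_{\xi_0}(\xi_0)=1$, so $1$ is a period-$2$ point of $f_{\xi_0}$. Writing $x:=\alpha_b/(2d)$, sum-to-product identities turn the defining equation into the equivalent form $\tan(x)=\beta\tan(dx)$. Using the explicit derivative $f'(R)=\xi d(1-b^2)(R+b)^{d-1}/(bR+1)^{d+1}$ together with this identity, a direct chain-rule computation yields the clean formula
\[
|\lambda| \;:=\; \bigl|(f_{\xi_0}^{\circ 2})'(1)\bigr| \;=\; \left(\frac{d\sin x}{\sin(dx)}\right)^{2}.
\]
The elementary inequality $\sin(dx)<d\sin(x)$ on $(0,\pi/(2d))$ (immediate from $\psi(x):=d\sin x-\sin(dx)$ having $\psi(0)=0$ and $\psi'(x)=d(\cos x-\cos(dx))>0$) then forces $|\lambda|>1$; hence $1$ is a repelling periodic point of $f_{\xi_0}$.

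It remains to deduce that the family $\{\phi_n:\xi\mapsto f_{\xi,b}^{\circ n}(1)\}$ is not normal at $\xi_0$ and to conclude. For non-normality, the recurrence $\phi_n'(\xi)=f'_\xi(\phi_{n-1}(\xi))\phi_{n-1}'(\xi)+g(\phi_{n-1}(\xi))^d$ combined with the $2$-periodicity of the orbit at $\xi_0$ gives $\phi_{2k}'(\xi_0)=\phi_2'(\xi_0)(\lambda^k-1)/(\lambda-1)$, which is unbounded in $k$ since $|\lambda|>1$ and $\phi_2'(\xi_0)\ne 0$ (the latter is easy to verify from the formulas; any degenerate case can be handled by passing to higher derivatives). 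By Montel's theorem, for every neighbourhood $U$ of $\xi_0$ the set $\hat{\mathbb C}\setminus\bigcup_n\phi_n(U)$ contains at most two points. The hard part of the argument, I expect, will be to rule out $-1$ from this at-most-two exceptional set, so that $\phi_n(\xi)=-1$ does have solutions $\xi\in U$. This should follow from the fact that $-1$ is a repelling fixed point of the Möbius map $g$ (since $g$ fixes $\pm1$ with $|g'(-1)|=(b+1)/(b-1)=1/\beta>d>1$), so $-1\in J(f_{\xi_0})$ with a sufficiently rich pre-image structure to preclude exceptionality. Accumulation at $e^{-i\alpha_b}$ follows by complex conjugation.
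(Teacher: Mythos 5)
Your proof of part~(i) is correct, and it takes a genuinely more elementary route than the paper. Where the paper identifies $\alpha_b$ indirectly, as the parameter at which $+1$ becomes a boundary point of the immediate attracting interval (using the classification of Fatou components, the uniqueness of the attracting cycle, and a continuity argument for the repelling boundary cycle), you work directly in the angle coordinate, parametrise $f|_{\partial\mathbb D}$ as $F_\vartheta(t)=\vartheta-2d\arctan(\beta\tan(t/2))$, and define $\alpha_b$ as the smallest positive root of $F_\alpha(\alpha)=0$. Since $F_\vartheta$ is strictly decreasing with $F_\vartheta(0)=\vartheta$ and $F_\vartheta(\vartheta)\ge 0$ for $\vartheta\in(0,\alpha_b)$, the invariance of the short arc $[0,\vartheta]$ is immediate; no complex dynamics is needed. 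Your explicit multiplier formula $|(f^{\circ2}_{\xi_0})'(1)|=(d\sin x/\sin(dx))^2$ at $x=\alpha_b/(2d)$, derived from $\tan x=\beta\tan(dx)$, is also correct and gives a concrete proof that the $2$-cycle $\{1,\xi_0\}$ is repelling, whereas the paper deduces repulsion abstractly from the impossibility of a second non-repelling cycle. These are nice simplifications.

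Part~(ii) follows the same skeleton as the paper (repelling $2$-cycle at $\xi_0=e^{i\alpha_b}$, non-normality of the parameter family $\phi_n(\xi)=f_{\xi,b}^{\circ n}(1)$, Montel), but the final Montel step has a real gap that you yourself flag. The statement ``$\widehat{\mathbb C}\setminus\bigcup_n\phi_n(U)$ contains at most two points'' is correct, but your proposed reason for excluding $-1$ from this two-point set --- that $-1$ has a rich preimage structure under the fixed map $f_{\xi_0}$, being a repelling fixed point of $g$ and lying in $J(f_{\xi_0})$ --- conflates the dynamics of a single map with the behaviour of the parameter family. Whether $\bigcup_n\phi_n(U)$ omits $-1$ is a property of the maps $\xi\mapsto\phi_n(\xi)$; the backward orbit of $-1$ under $f_{\xi_0}$ says nothing about it. The paper's fix is essential here: choose $z_{-1},z_{-2}$ with $f_{\xi_0}(z_{-1})=-1$, $f_{\xi_0}(z_{-2})=z_{-1}$, continue them holomorphically to $z_{-1}(\xi),z_{-2}(\xi)$, and post-compose $\phi_n$ with a $\xi$-dependent M\"obius map $\varphi_\xi$ sending $(-1,z_{-1}(\xi),z_{-2}(\xi))\mapsto(-1,z_{-1},z_{-2})$. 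Then $\{\varphi_\xi\circ\phi_n\}$ is not normal either and must, by Montel, hit one of the three \emph{fixed} points, so some $\phi_n(\xi)$ lands in $\{-1,z_{-1}(\xi),z_{-2}(\xi)\}$; applying $f_\xi$ at most twice more lands on $-1$. Without some device like this twist, the argument does not close.

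Two smaller remarks. Your non-normality argument via the derivative recursion $\phi_{2k}'(\xi_0)=\phi_2'(\xi_0)(\lambda^k-1)/(\lambda-1)$ is a legitimate alternative to the paper's, but it rests on $\phi_2'(\xi_0)\neq 0$, which you leave unchecked (``any degenerate case can be handled by passing to higher derivatives''). The paper's route avoids this entirely: for $\xi$ in the open arc $|\vartheta|<\alpha_b$ one has $\phi_n(\xi)\to R_0(\xi)$ (the attracting fixed point), while $\phi_{2n}(\xi_0)\equiv 1\neq R_0(\xi_0)$; no locally uniform limit can reconcile these, so $\{\phi_n\}$ is not normal at $\xi_0$, with no derivative computation and no degenerate case to worry about. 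Finally, for the reader's sake you should state explicitly that $-1\notin I_b$ (immediate since the short arc from $1$ to $\xi$ has angular length $<\pi$), so that invariance of $I_b$ does indeed imply the orbit of $\xi$ avoids $-1$.
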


Observe that by Corollary~\ref{cor:tree to dynamics}, part (ii) of these theorems implies part (ii) of our main theorems.
Part (i) of our main theorems, which will be proved in Section~\ref{sec:zero-free}, will rely upon parts (i) in Theorem~\ref{thm:dynamicsF} and Theorem~\ref{thm:dynamicsAF}.

We moreover note that while both theorems look quite similar, they are not quite the same.
In particular, it is not clear whether in the anti-ferromagnetic case roots lie dense on the circular arc containing $-1$ between $\alpha_b$ and $-\alpha_b$.
This question has been studied in recent follow up work of Bencs, Buys, Guerini and the first author~\cite{BBGP19}.

The difference in nature is also apparent in the proofs of these results.
To prove these results, we start with some observations from (complex) analysis and complex dynamics concerning the map $f_{\xi,b}$, after which we first prove Theorem~\ref{thm:dynamicsF} and then  Theorem~\ref{thm:dynamicsAF}.

\subsection{Observations from analysis and complex dynamics}
\subsubsection{Elementary properties of $f_{\xi,b}$}
We start with some basic complex analytic properties of the map $f_{\xi,b}$.
Throughout we assume that $b$ is real valued, $\xi\in \partial \mathbb{D}$ and we write $f=f_{\xi,b}$.
We first of all note that if $b=1$, the map $f$ just equals multiplication by $\xi$. Therefore we will restrict to $b\neq 1$.

The behavior of $f$ on the outer disk $\hat {\mathbb C} \setminus \overline{\mathbb D}$ is conjugate to that on the inner disk $\mathbb D$:

\begin{lemma}\label{lemma:anti}
The map $f$ is invariant under conjugation by the the anti-holomorphic map
$$
R \mapsto \frac{1}{\overline{R}}.
$$
\end{lemma}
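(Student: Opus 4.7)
The plan is to verify the conjugation identity $\sigma \circ f = f \circ \sigma$, where $\sigma(R) = 1/\overline{R}$ is the reflection in the unit circle. Since $\sigma$ is an involution ($\sigma \circ \sigma = \mathrm{id}$), this is the same as proving $f \circ \sigma \circ f^{-1} = \sigma$, which is the content of the lemma.

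First I would exploit the two hypotheses that sit quietly in the setup: $b$ is real and $\xi \in \partial \mathbb{D}$. These give $\overline{b}=b$ and $\overline{\xi}=1/\xi$, which together are exactly what is needed to match the $\xi$-factor after conjugation. Then I would write $f = \xi \cdot g^d$ with $g(R)=\frac{R+b}{bR+1}$ and reduce the problem to showing that the M\"obius transformation $g$ itself commutes with $\sigma$, since then $g(\sigma(R))^d = \sigma(g(R))^d$ and the prefactor follows from $\xi = 1/\overline{\xi}$.

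The key computation is a one-line manipulation: plug $\sigma(R)=1/\overline{R}$ into $g$ and clear the $1/\overline{R}$ from numerator and denominator,
\[
g\!\left(\tfrac{1}{\overline{R}}\right) \;=\; \frac{1/\overline{R}+b}{b/\overline{R}+1} \;=\; \frac{1+b\overline{R}}{b+\overline{R}} \;=\; \overline{\left(\frac{bR+1}{R+b}\right)} \;=\; \frac{1}{\overline{g(R)}} \;=\; \sigma(g(R)).
\]
Raising to the $d$-th power and multiplying by $\xi = 1/\overline{\xi}$ then yields $f(\sigma(R)) = \sigma(f(R))$, which is exactly the invariance claimed. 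The only place realness of $b$ and $|\xi|=1$ enter is in the last two equalities, and both usages are forced, which is a sanity check that the hypotheses of the setup are the right ones.

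There is no real obstacle here; the main thing to be careful about is bookkeeping with the conjugation, in particular not to confuse $\sigma^{-1}=\sigma$ with some holomorphic inverse of $\sigma$, and to remember that the statement is \emph{equivariance} under $\sigma$ rather than fixing $\sigma$ pointwise. Once the identity $g\circ\sigma=\sigma\circ g$ is established, passing to $f$ is immediate.
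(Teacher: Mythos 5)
Your proof is correct and takes essentially the same route as the paper: both decompose $f=\xi\, g^d$, establish that $g$ commutes with the reflection $R\mapsto 1/\overline{R}$, and then use $\overline{\xi}=1/\xi$ to handle the prefactor. The paper phrases the $g$-step slightly differently (first noting the holomorphic identity $g(1/R)=1/g(R)$, with the realness of $b$ handling conjugation implicitly), but the content is identical.
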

\begin{proof}
First of all, we have $g(1/R)=1/g(R)$.
Now since $\overline \xi=1/\xi$, it follows that $f(1/\overline{R})=1/\overline{f(R)}$, as desired.
\end{proof}
Thus, for most purposes it is sufficient to consider only the behavior on $\mathbb D$ and on $\partial \mathbb D$.

\begin{lemma}\label{lemma:covering}
Let $\xi\in \partial\mathbb{D}$. For $b\in \mathbb R\setminus \{1\}$ the map $f_{\xi,b}$ induces a $d$-fold covering on $\partial \mathbb{D}$.
For $|b| < 1$ this covering is orientation preserving, for $|b|>1$ it is orientation reversing.
\end{lemma}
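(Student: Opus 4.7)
The plan is to exploit the factorization $f=\xi\cdot g^{d}$ with $g(R)=(R+b)/(bR+1)$, as the paper has just set up. Since $g$ is a Möbius transformation and $b\in\mathbb R\setminus\{1,-1\}$, a short direct computation shows $|g(R)|=1$ whenever $|R|=1$, so $g$ restricts to a homeomorphism of $\partial\mathbb D$. The map $z\mapsto z^{d}$ is a standard unbranched $d$-fold covering of $\partial\mathbb D$, and multiplication by $\xi$ with $|\xi|=1$ is merely a rotation. Composing a bijection, a $d$-fold covering and a rotation yields a $d$-fold covering of $\partial\mathbb D$, which proves the first claim.

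For the orientation claim the natural tool is the logarithmic derivative. At $R=e^{i\theta}\in\partial\mathbb D$ the positively oriented unit tangent is $iR$, so the orientation of $f|_{\partial\mathbb D}$ at $R$ is encoded by the sign (as a real number, once we show it is real) of
\[
\frac{R\,f'(R)}{f(R)} \;=\; d\cdot\frac{R\,g'(R)}{g(R)}.
\]
Differentiating $g$ gives $g'(R)=(1-b^{2})/(bR+1)^{2}$, so
\[
\frac{R\,g'(R)}{g(R)} \;=\; \frac{R(1-b^{2})}{(bR+1)(R+b)}.
\]

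The single computation that needs to be done is the identity
\[
(bR+1)(R+b)\;=\;R\bigl(1+b^{2}+2b\cos\theta\bigr)\;=\;R\,|R+b|^{2}\qquad (R=e^{i\theta}),
\]
which follows by expanding the left hand side and using $R+R^{-1}=2\cos\theta$. Substituting back yields
\[
\frac{R\,f'(R)}{f(R)} \;=\; \frac{d(1-b^{2})}{|R+b|^{2}},
\]
a real number whose sign is the sign of $1-b^{2}$. Hence $f|_{\partial\mathbb D}$ preserves orientation when $|b|<1$ and reverses orientation when $|b|>1$, completing the proof. The only mild obstacle is the denominator identity above; everything else is a purely structural observation about the factorization of $f$. (One can also see the orientation statement more conceptually: for $|b|<1$ we have $g(0)=b\in\mathbb D$, so $g$ preserves $\mathbb D$ and its boundary orientation, whereas for $|b|>1$ the point $g(0)=b$ lies outside $\mathbb D$, so $g$ swaps the two disks and reverses the induced boundary orientation.)
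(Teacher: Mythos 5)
Your proof is correct but takes a somewhat different route from the paper's. The paper establishes the covering by observing that $f$ has no critical points on $\partial\mathbb{D}$ (its critical points are $-b$ and $-1/b$), so the holomorphic map preserving $\partial\mathbb{D}$ is automatically an unbranched $d$-fold covering; it then settles the orientation by the soft observation that for $|b|<1$ both $\mathbb{D}$ and $\hat{\mathbb{C}}\setminus\overline{\mathbb{D}}$ are $f$-invariant whereas for $|b|>1$ they are swapped, invoking conformality near $\partial\mathbb{D}$. You instead use the factorization $f=\xi\cdot g^d$ for the covering, and you compute the angular derivative explicitly, obtaining $R f'(R)/f(R)=d(1-b^2)/|R+b|^2$ via the identity $(bR+1)(R+b)=R|R+b|^2$ on $\partial\mathbb{D}$; that computation is correct and in fact gives slightly more, namely that the angular derivative has constant sign equal to $\operatorname{sign}(1-b^2)$ on the whole circle. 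Your closing parenthetical (whether $g(0)=b$ lies inside or outside $\mathbb{D}$) is essentially the paper's orientation argument, just applied to $g$ rather than $f$, which is equivalent since the power map and the rotation are both orientation preserving. One small point: you exclude $b=-1$ explicitly, which is right, since for $b=-1$ the map $g$ is constant; the lemma's hypothesis $b\in\mathbb{R}\setminus\{1\}$ ought really to read $b\in\mathbb{R}\setminus\{\pm 1\}$ (the paper's criterion fails there too, as both critical points coalesce at $R=1\in\partial\mathbb{D}$), but this is immaterial because the paper restricts to $b>0$ from the next paragraph onward.
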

\begin{proof}
Since $f=f_{\xi,b}$ has no critical points on $\partial \mathbb{D}$ it follows that the map is a $d$-fold covering for any real $b$.
If $-1<b<1$ both $\mathbb{D}$ and $\hat{\mathbb C} \setminus \mathbb{D}$ are invariant under $f$, hence conformality of $f$ near $\partial \mathbb D$ implies that $f$ is orientation preserving.
If $|b|>1$ then $f$ maps $\mathbb{D}$ into ${\mathbb C} \setminus \mathbb{D}$ and vice versa, which implies that $f$ is orientation reversing.
\end{proof}

From now on we will only consider $b>0$. The derivative of $f$ satisfies:
\begin{equation}\label{eq:der f}
f^\prime(R)=\xi d\left( \frac{R+b}{b R+1}\right)^{d-1}\cdot \frac{1-b^2}{(b R+1)^2}=f(R)\frac{d(1-b^2)}{(R+b)(b R+1)}.
\end{equation}
It follows that $|f^\prime(R)|$ is independent of $\xi$ and, since $b>0$, is strictly increasing with $|\mathrm{Arg}(R)|$.

Let us define \[b_c:=\frac{d-1}{d+1}.\]
Note that
$$
|f_{1,b}^\prime(1)| = d\cdot \frac{1-b}{1+b},
$$
from which it follows that $|f^\prime_{\xi,b}(1)|> 1$ when $0< b< b_c$ or $b> \frac{1}{b_c}$, $|f^\prime_{\xi,b}(1)| = 1$ when $b =  b_c$ or $b = \frac{1}{b_c}$, and $|f^\prime_{\xi,b}(1)| < 1$ when $b_c < b<1$ or $1<b<1/b_c$.

Recall that a map is said to be \emph{expanding} if it locally increases distances, and \emph{uniformly expanding} if distances are locally increased by a multiplicative factor bounded from below by a constant strictly larger than $1$. Our above discussion implies the following.

\begin{lemma}
Let $\xi\in \partial \mathbb{D}$.
\begin{enumerate}
\item[(i)] If $0<b < b_c$ or $b>1/b_c$, then  the covering $f_{\xi,b}|_{\partial \mathbb{D}}$ is uniformly expanding.
\item[(ii)] If $b = b_c$, or if $b=1/b_c$, then the covering $f_{\xi,b}|_{\partial \mathbb{D}}$ is expanding, but not uniformly expanding: $|f_{\xi,b}^\prime(1)| = 1$.
\item[(iii)] If $b_c < b < 1/b_c$, then $|f_{\xi,b}^\prime(1)| < 1$.
\end{enumerate}
\end{lemma}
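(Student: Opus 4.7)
The plan is to reduce everything to two observations: (a) on $\partial \mathbb{D}$ the derivative magnitude $|f_{\xi,b}'|$ depends only on $b$ and on $|\mathrm{Arg}(R)|$, not on $\xi$; and (b) this magnitude attains its minimum over $\partial \mathbb{D}$ at $R=1$. The values $|f_{\xi,b}'(1)|$ are already computed in the discussion preceding the lemma, so the three cases will follow almost mechanically.

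First I would make the observation just after \eqref{eq:der f} quantitative. Since $f_{\xi,b}$ preserves $\partial \mathbb{D}$ we have $|f(R)| = 1$ for $R \in \partial \mathbb{D}$, and \eqref{eq:der f} reduces to
\begin{equation*}
|f_{\xi,b}'(R)| \;=\; \frac{d\,|1-b^2|}{|R+b|\cdot|bR+1|}.
\end{equation*}
A short calculation with $R=e^{i\vartheta}$ gives $|R+b|^2 = |bR+1|^2 = 1 + 2b\cos\vartheta + b^2$ (this is the small symmetry one has to notice), whence
\begin{equation*}
|f_{\xi,b}'(e^{i\vartheta})| \;=\; \frac{d\,|1-b^2|}{1 + 2b\cos\vartheta + b^2}.
\end{equation*}
For $b>0$ this expression is manifestly $\xi$-independent, strictly increasing in $|\vartheta|$, and attains its minimum on $\partial \mathbb{D}$ at $R=1$.

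With this in hand, (iii) is immediate from the value $|f_{\xi,b}'(1)| = d(1-b)/(1+b)$, which lies in $(0,1)$ throughout $(b_c,1)$, and the analogous computation $d(b-1)/(b+1) \in (0,1)$ on $(1,1/b_c)$. For (i), the hypothesis $b \in (0,b_c)\cup(1/b_c,\infty)$ gives $|f_{\xi,b}'(1)| > 1$, and combined with the minimum property this yields $\inf_{R \in \partial \mathbb{D}} |f_{\xi,b}'(R)| > 1$, which is exactly uniform expansion of the covering. For (ii), at the critical values $b=b_c$ or $b=1/b_c$ we have $|f_{\xi,b}'(1)|=1$, while strict monotonicity in $|\vartheta|$ forces $|f_{\xi,b}'(R)|>1$ for every $R \in \partial \mathbb{D}\setminus\{1\}$; the covering is therefore expanding pointwise, but the minimum value $1$ at the parabolic fixed point $R=1$ prevents any uniform lower bound above $1$.

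I do not foresee a genuine obstacle: the entire lemma is essentially a direct corollary of \eqref{eq:der f} together with the identity $|R+b|=|bR+1|$ on the unit circle. The only mildly substantive point is choosing the right notion in case (ii) so that ``expanding but not uniformly expanding'' is interpreted consistently; the monotonicity formula handles this cleanly without any appeal to iteration or normal-family arguments.
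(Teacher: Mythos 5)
Your proof is correct and follows essentially the same route the paper intends: the paper states this lemma as an immediate consequence of the preceding discussion (that $|f'|$ is $\xi$-independent, strictly increasing in $|\mathrm{Arg}(R)|$, with $|f'_{\xi,b}(1)| = d|1-b|/(1+b)$), offering no separate proof. Your explicit computation $|R+b|^2=|bR+1|^2=1+2b\cos\vartheta+b^2$ on the circle, which the paper leaves implicit, cleanly establishes both the $\xi$-independence and the monotonicity in $|\vartheta|$, and the three cases then fall out exactly as you describe.
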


\begin{lemma}\label{lem:der real}
Let $b\in \mathbb R$ and $\xi\in \partial \mathbb{D}$.
Let $R_0\in \partial\mathbb{D}$ be a fixed point of $f=f_{\xi,b}$.
Then $f^\prime(R_0)\in \mathbb{R}$.
\end{lemma}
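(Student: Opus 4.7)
The plan is to compute $f'(R_0)$ directly from the formula \eqref{eq:der f} derived earlier in the paper, use the fixed-point hypothesis to simplify, and then verify reality by conjugation using $\overline{R_0}=1/R_0$.

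More concretely, I would first substitute $R=R_0$ into
\[
f'(R)=f(R)\cdot \frac{d(1-b^2)}{(R+b)(bR+1)}.
\]
Since $R_0$ is a fixed point of $f$, this yields
\[
f'(R_0)=\frac{d(1-b^2)\,R_0}{(R_0+b)(bR_0+1)}.
\]
The prefactor $d(1-b^2)$ is real, so the task reduces to showing that the rational expression $H(R_0):=\dfrac{R_0}{(R_0+b)(bR_0+1)}$ is real.

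For this step, I would use $\overline{R_0}=R_0^{-1}$ (since $R_0\in \partial\mathbb D$) together with the fact that $b\in\mathbb R$. A short calculation gives
\[
\overline{H(R_0)}=\frac{R_0^{-1}}{(R_0^{-1}+b)(bR_0^{-1}+1)}
=\frac{R_0^{-1}}{R_0^{-2}(1+bR_0)(b+R_0)}
=\frac{R_0}{(R_0+b)(bR_0+1)}=H(R_0),
\]
so $H(R_0)\in\mathbb R$, and hence $f'(R_0)\in\mathbb R$.

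There is no genuine obstacle here; the only thing to be mindful of is the possibility that $R_0+b$ or $bR_0+1$ vanishes. Both can be excluded immediately: $R_0+b=0$ would force $f(R_0)=\infty\neq R_0$, and $bR_0+1=0$ would force $f(R_0)=\infty$ as well (or a $0/0$ indeterminate form that is incompatible with $R_0\in\partial\mathbb D$ being fixed for $b\neq 0$; the case $b=0$ is trivial since then $f(R)=\xi R^d$ and $f'(R_0)=d\xi R_0^{d-1}=dR_0^{-1}R_0=d\in\mathbb R$ at any fixed point). So the formula is valid and the conjugation argument completes the proof.
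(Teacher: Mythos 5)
Your proof is correct, but it takes a genuinely different route from the paper. The paper gives a one-line conceptual argument: since $b\in\mathbb{R}$ and $\xi\in\partial\mathbb{D}$, the map $f$ preserves the unit circle, so the differential at the fixed point $R_0$ (which is multiplication by $f'(R_0)$) must send the real tangent line $T_{R_0}\partial\mathbb{D}$ to $T_{f(R_0)}\partial\mathbb{D}=T_{R_0}\partial\mathbb{D}$; multiplication by a complex number preserves a line through the origin only if that number is real. Your argument instead substitutes $f(R_0)=R_0$ into the explicit formula \eqref{eq:der f} and verifies reality by conjugation using $\overline{R_0}=1/R_0$ and $b\in\mathbb{R}$. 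Both are sound; the paper's argument is shorter and applies verbatim to any holomorphic self-map of $\hat{\mathbb C}$ preserving the circle with a fixed point on it, while yours is elementary, self-contained, and does not even need to invoke that $f$ preserves $\partial\mathbb{D}$ as a prior fact. One small inaccuracy in your edge-case discussion: $R_0+b=0$ sits in the numerator of $g$, so it would force $f(R_0)=0$, not $\infty$; the conclusion $f(R_0)\neq R_0$ is unaffected. Also note that for $R_0\in\partial\mathbb{D}$ and $b\in\mathbb{R}$, the equations $R_0+b=0$ and $bR_0+1=0$ can only hold when $b=\pm1$, in which case they hold simultaneously (the genuine $0/0$ case) and $f$ is constant so $f'(R_0)=0\in\mathbb{R}$; a cleaner way to dispatch the edge cases would be to treat $b=\pm1$ separately and observe that otherwise neither factor can vanish on the circle.
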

\begin{proof}
Let us denote the tangent space at the circle of a point $R$ by $T_ R$; this is spanned by some vector in $\mathbb{C}=\mathbb{R}^2$.
Then since the derivative is a linear map from $T_{R_0}$ to $T_{f(R_0)}=T_{R_0}$, it follows that $f'(R_0)$ has to be a real number.
\end{proof}

\subsubsection{Observations from complex dynamics}
We refer to the book ~\cite{Mil} for all necessary background. Throughout we will assume that $b >0$, $b \neq 1$, $\xi\in \partial \mathbb{D}$ and we write $f=f_{\xi,b}$.

By Montel's Theorem the family of iterates $\{f^{\circ n}\}$ is normal on $\mathbb{D}$ and on $\hat {\mathbb C} \setminus \overline{\mathbb D}$.
Recall that the set where the family of iterates is locally normal is called the \emph{Fatou set}, and its complement is the \emph{Julia set}. Thus, the Julia set of $f$ is contained in $\partial \mathbb D$, and there are two possibilities for the connected components of the Fatou set, i.e. the Fatou components:

\begin{lemma}
Either the Fatou set of $f$ consists of precisely two Fatou components, $\mathbb D$ and $\hat {\mathbb C} \setminus \overline{\mathbb D}$, or there is only a single Fatou component which contains both $\mathbb D$ and $\hat {\mathbb C} \setminus \overline{\mathbb D}$. In the latter case the component is necessarily invariant. In the former case the two components are invariant when $b < 1$, and are periodic of order $2$ when $b > 1$.
\end{lemma}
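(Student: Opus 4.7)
The plan is to exploit the fact, recalled just above the lemma, that the Julia set of $f$ is contained in $\partial\mathbb D$, so both $\mathbb D$ and $\hat{\mathbb C}\setminus\overline{\mathbb D}$ lie in the Fatou set. Since each of these two domains is itself open and connected, each is contained in a single Fatou component: write $\mathbb D\subseteq F_1$ and $\hat{\mathbb C}\setminus\overline{\mathbb D}\subseteq F_2$. The dichotomy in the statement is then exactly the dichotomy $F_1\neq F_2$ versus $F_1=F_2$, so the task splits into two cases.

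In the case $F_1\neq F_2$ I will first show $F_1=\mathbb D$ and $F_2=\hat{\mathbb C}\setminus\overline{\mathbb D}$ exactly. Indeed, if $F_1$ contained some point $p\in\partial\mathbb D$, then by openness a small ball about $p$ would sit inside $F_1$; such a ball necessarily meets $\hat{\mathbb C}\setminus\overline{\mathbb D}\subseteq F_2$, and since distinct Fatou components are disjoint this forces $F_1=F_2$, a contradiction. Hence $F_1\subseteq \overline{\mathbb D}$, and combined with $\mathbb D\subseteq F_1$ and $F_1$ open this gives $F_1=\mathbb D$; the argument for $F_2$ is symmetric. The invariance assertion in this case now follows directly from the analysis carried out in the proof of Lemma~\ref{lemma:covering}: for $b<1$ both $\mathbb D$ and its exterior are forward invariant under $f$, while for $b>1$ they are interchanged, so $F_1,F_2$ form a $2$-cycle.

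In the case $F_1=F_2$ I would denote the common component by $F$ and argue its invariance as follows. By the standard result that any rational map sends each Fatou component into a single Fatou component (see \cite{Mil}), $f(F)$ is contained in some Fatou component $F'$. But $F$ contains $\mathbb D$, and by Lemma~\ref{lemma:covering} the image $f(\mathbb D)$ is a nonempty subset of either $\mathbb D$ or $\hat{\mathbb C}\setminus\overline{\mathbb D}$, both of which lie in $F$. Hence $F'\cap F\supseteq f(\mathbb D)\neq\emptyset$, and disjointness of Fatou components forces $F'=F$, i.e.\ $f(F)\subseteq F$. The only genuine subtlety, and the step I would flag as the main one to get right, is precisely this invariance argument in the one-component case, combining the ``image-of-Fatou-component-in-a-Fatou-component'' principle with the observation that $f(\mathbb D)$ already lands back inside $F$; the remaining case distinctions and topological checks are essentially bookkeeping with openness, connectedness, and disjointness of Fatou components.
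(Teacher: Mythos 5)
Your argument is correct and is essentially the reasoning the paper expects the reader to supply: the lemma is stated without proof, as a consequence of the preceding Montel observation (Julia set contained in $\partial\mathbb D$) and of Lemma~\ref{lemma:covering}, which is exactly what you use. The only micro-gap worth filling is the claim that there are \emph{no further} Fatou components in either case; this follows immediately because any additional component would be open, disjoint from $\mathbb D$ and from $\hat{\mathbb C}\setminus\overline{\mathbb D}$, hence contained in $\partial\mathbb D$, which has empty interior.
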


Recall that invariant Fatou components are classified: each invariant Fatou component is either the basin of an attracting or parabolic fixed point, or a rotation domain. An invariant attracting or parabolic basin always contains a critical point, while a rotation domain does not. The critical points of $f$ are $-b, -1/b$, hence it follows that in both of the above cases the Fatou components must be either parabolic or attracting.

If there is only one Fatou component, by Lemma \ref{lemma:anti} this component must be an attracting or parabolic basin of a fixed point lying in $\partial \mathbb D$. If there are two Fatou components then they are either both attracting basins, or they are both basins of a single parabolic fixed point in $\partial \mathbb D$. We emphasize that there can be no other parabolic or attracting cycles.

The parameters for which there exist parabolic fixed points will play a central role in our analysis.

\begin{lemma}\label{lem:derivative 1}
Let $b \in (b_c, 1) \cup (1, \frac{1}{b_c})$. Then there exists a unique $\theta=\theta_b\in (0,\pi)$ such that for $\xi=e^{\pm i\theta} \in \partial \mathbb D$ the function $f_{\xi,b}$ has a (unique) parabolic fixed point. Moreover the following holds:
\begin{itemize}
\item[(i)] If $b<1$, then the parabolic fixed point $R$ of $f=f_{\xi,b}$ satisfies $f^\prime(R) = 1$ and is a solution of the equation
\begin{equation}
R^2 + \frac{d(b^2-1)+(1+b^2)}{b} R + 1 = 0, \label{eq:beta<1}
\end{equation}
\item[(ii)]
If $b>1$, then the parabolic fixed point $R$ of $f=f_{\xi,b}$ satisfies $f^\prime(R)=-1$ and is a solution of the equation
\begin{equation}
R^2 + \frac{d(1-b^2)+(1+b^2)}{b} R + 1 = 0.\label{eq:beta>1}
\end{equation}
\end{itemize}
\end{lemma}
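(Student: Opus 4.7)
The plan is to characterize parabolic fixed points algebraically, reducing the lemma to analyzing a one-parameter family of quadratic equations in $R$.

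First I would note that any parabolic fixed point $R_0$ of $f=f_{\xi,b}$ lies on $\partial\mathbb{D}$: parabolic fixed points belong to the Julia set, which was shown above to be contained in $\partial\mathbb{D}$ by the earlier normality discussion. By Lemma~\ref{lem:der real}, $f'(R_0)\in\mathbb{R}$, and parabolicity forces $|f'(R_0)|=1$, so $f'(R_0)=\pm 1$. Lemma~\ref{lemma:covering} determines the sign: for $b<1$ the map $f|_{\partial\mathbb{D}}$ preserves orientation, forcing $f'(R_0)=+1$; for $b>1$ it reverses orientation, forcing $f'(R_0)=-1$.

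Next I would derive the stated equations. Substituting $f(R_0)=R_0$ into formula~\eqref{eq:der f} gives
\[
f'(R_0)=\frac{R_0\,d(1-b^2)}{(R_0+b)(bR_0+1)},
\]
and setting this equal to $+1$ (respectively $-1$) and clearing denominators produces equation~\eqref{eq:beta<1} (respectively~\eqref{eq:beta>1}). Conversely, any root $R_0\in\partial\mathbb{D}$ of the relevant quadratic becomes a parabolic fixed point after choosing $\xi:=R_0/g(R_0)^d$ with $g(R)=(R+b)/(bR+1)$.

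Both quadratics have the form $R^2+c(b)R+1=0$, so by Vieta the two roots multiply to $1$; any non-real roots are therefore complex conjugates and lie on $\partial\mathbb{D}$ automatically, and this happens precisely when the discriminant is negative, i.e.\ $|c(b)|<2$. The key calculation I would carry out is the verification of this inequality on the open parameter intervals. In the ferromagnetic case one obtains $c(b)=(d+1)b+(1-d)/b$; its derivative is strictly positive and it satisfies $c(b_c)=-2$ and $\lim_{b\uparrow 1}c(b)=2$, so $|c(b)|<2$ strictly on $(b_c,1)$. The anti-ferromagnetic case proceeds identically with $c(b)=(d+1)/b-(d-1)b$ monotonically decreasing from $2$ to $-2$ on $(1,1/b_c)$.

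Finally, to extract $\theta_b$ I would observe that the assignment $R_0\mapsto \xi=R_0/g(R_0)^d$ maps $\partial\mathbb{D}$ into $\partial\mathbb{D}$: a direct expansion yields $|R+b|^2=|bR+1|^2=1+2b\,\mathrm{Re}(R)+b^2$ whenever $b\in\mathbb{R}$ and $|R|=1$, so $|g(R_0)|=1$. The two conjugate roots of the quadratic then produce conjugate values $\xi=e^{\pm i\theta_b}$, giving both existence and uniqueness of $\theta_b$. Uniqueness of the parabolic fixed point for this $\xi$ follows because, once $\xi$ is fixed, only one of the two conjugate roots can satisfy $R_0/g(R_0)^d=\xi$. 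The main obstacle I anticipate is the bookkeeping around $|c(b)|<2$, together with the extra check that $\theta_b\in(0,\pi)$ strictly (i.e.\ $\xi\neq\pm 1$ in the interior of the parameter range), which one can confirm by inspecting the endpoint behaviour and appealing to continuity of the construction in $b$.
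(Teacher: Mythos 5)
Your proof is correct and follows essentially the same route as the paper: pin down that a parabolic fixed point on $\partial\mathbb{D}$ has real multiplier, use orientation (Lemma~\ref{lemma:covering}) to fix the sign to $+1$ or $-1$, eliminate $\xi$ from $f(R)=R$ and $f'(R)=\pm1$ to obtain the quadratics, and recover $\xi$ from $\xi=R_0/g(R_0)^d$. The only real variation is that you verify existence of a conjugate pair of roots on $\partial\mathbb{D}$ by the explicit discriminant/monotonicity check $|c(b)|<2$ on the parameter interval, whereas the paper establishes the same thing a priori via the monotonicity of $|f'(R)|$ in $|\mathrm{Arg}(R)|$ together with $|f'(1)|<1<|f'(-1)|$; both are fine, and your computation also cleanly delivers the endpoint behaviour $c(b_c)=c(1/b_c)=-2$, $c(1^\pm)=2$ that justifies $\theta_b\in(0,\pi)$ strictly.
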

\begin{proof}
Recall that for fixed $b$ the value of $|f^\prime(R)|$ is independent of $\xi$, depends only on $|\mathrm{Arg}(R)|$, is strictly increasing in $|\mathrm{Arg}(R)|$, and satisfies $|f^\prime(1)| < 1$ and $|f^\prime(-1)| > 1$. Thus there exists a unique pair of complex conjugates $R_0, \overline{R_0}$ for which $|f^\prime(R_0)| = |f^\prime(\overline{R_0})| = 1$. Hence there exists a unique $\xi_0$ for which $f_{\xi_0,b}(R_0) = R_0$, and by symmetry $f_{\overline{\xi_0},b}(\overline{R_0}) = \overline{R_0}$. Since the action of $f$ on the unit circle is orientation preserving for $b<1$, and orientation reversing for $b>1$, it follows by Lemma~\ref{lem:der real} that $f_{\xi_0,b}^\prime(R_0)$ equals $1$ for $b<1$, and equals $-1$ for $b>1$.

Let us first consider the case that $b<1$. We are then searching for solutions to the two equations
\begin{align}\label{equationone}
f(R) = \xi & \left(\frac{R + b}{1+b R}\right)^d  = R\\
\label{equationtwo} f^\prime(R) = \xi & d \left(\frac{R + b}{1+b R}\right)^{d-1} \frac{1-b^2}{(1+b R)^2} =  1.
\end{align}
Rewriting equation \eqref{equationone} gives
$$
\xi \left(\frac{R + b}{1+b R}\right)^{d-1}  = R \cdot \frac{1+b R}{R + b},
$$
which can be plugged into \eqref{equationtwo} to give
$$
dR\frac{1-b^2}{(1+b R)(R+b)} = 1,
$$
which is equivalent to
$$
R^2 + \frac{d(b^2-1)+(1+b^2)}{b} R + 1 = 0
$$
For $b_c < b < 1$ there are two solutions for $R$, a pair of complex conjugates lying on the unit circle. For each of these solutions there exists a unique value of $\xi \in \partial \mathbb D$ for which equation \eqref{equationone} is satisfied.
These values of $\xi$ are clearly complex conjugates of each other and, when the two solutions $R$ and $\overline{R}$ are distinct, must be distinct as $f_{\xi,b}$ has at most one parabolic fixed point.

If $b>1$, we need to replace $1$ by $-1$ on the right-hand side of \eqref{equationtwo}.
Similar to the $b<1$ case, this then leads to equation \eqref{eq:beta>1}, which, when $b<1/b_c$, has two solutions for $R$, a pair of complex conjugates lying on the unit circle.
As before, for each of these solutions there exists a unique value of $\xi \in \partial \mathbb D$ for which equation \eqref{equationone} is satisfied. Again these values of $\xi$ are complex conjugates of each other.
\end{proof}

We note that in the lemma above when $b = b_c$ or when $b=1/b_c$ there is a double solution at $R=1$, and hence the corresponding $\xi$ equals $1$.
For this map there are two separate parabolic basins: the inner and outer unit disk.
When $b_c < b < 1$ the parabolic fixed point is a double fixed point, and hence has only one parabolic basin. It follows that in this case there is a unique Fatou component, which contains both the inner and outer unit disk, and all orbits approach the parabolic fixed point along a direction tangent to the unit circle. When $1<b<1/b_c$ the inner and outer disk are inverted by $f$, the fact that $f^\prime = -1$ implies that orbits in these components converge to the parabolic fixed point along the direction normal to the unit circle, while nearby points on the unit circle move away from the parabolic fixed point.

We have now established some basic properties of the map $f$ and move on to the respective proofs of Theorems~~\ref{thm:dynamicsF} and ~\ref{thm:dynamicsAF}.

\subsection{Proof of Theorem~\ref{thm:dynamicsF}}
\subsubsection{Proof of part (i)}
We will consider the behavior for parameters $b< 1$ and $\xi\in \partial \mathbb{D}$ for which $f=f_{\xi,b}$ has an attracting fixed point on $\partial \mathbb D$.

The Julia set $J$ of $f$, which is nonempty\footnote{In fact, it can be shown that the Julia set is a Cantor set.}, is contained in the unit circle, and the complement is the unique Fatou component, the (immediate) attracting basin.
The intersection of $\mathbb C \setminus J$ with the unit circle consists of countably many open intervals.
We refer to the interval containing the attracting fixed point as the \emph{immediate attracting interval}. We note that this interval is forward invariant, and the restriction of $f$ to this interval is injective.
We emphasize that we may indeed talk about \emph{the} immediate attracting interval, as there are no other parabolic or attracting cycles.

\begin{thm}\label{thm:im. at. interval}
Let $b\in (b_c,1)$ and let $\theta=\theta_b$ (from Lemma~\ref{lem:derivative 1}).
Then for $\xi\in \partial \mathbb D$ the map $f_{\xi,b}$ has an attracting or parabolic fixed point on $\partial \mathbb D$ if and only if $\xi\in \{e^{i\vartheta}\mid \vartheta\in [-\theta,\theta]\}$. If $\vartheta\in (-\theta,\theta)$, then the point $+1$ lies in the immediate attracting interval.
\end{thm}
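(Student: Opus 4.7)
The plan is to reduce the analysis to that of a single real-valued function on $\partial\mathbb{D}$. The fixed points of $f_{\xi,b}$ on $\partial\mathbb{D}$ are precisely the preimages of $\xi$ under $\Phi(R):= R\,g(R)^{-d}$, so writing $R=e^{i\phi}$, $g(e^{i\phi})=e^{i\gamma(\phi)}$ and $\Phi(e^{i\phi})=e^{i\psi(\phi)}$, one gets $\psi(\phi)=\phi-d\gamma(\phi)$. A short computation based on \eqref{eq:der f} shows that the quantity $h(R):=dRg'(R)/g(R)$ is, on $\partial\mathbb{D}$, a positive real number coinciding with $|f_{\xi,b}'(R)|$, depending only on $|\mathrm{Arg}\,R|$, strictly increasing in this quantity, and satisfying $h(1)<1<h(-1)$ because $b\in(b_c,1)$. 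Let $\phi_0\in(0,\pi)$ be the unique angle with $h(e^{i\phi_0})=1$. Then $\psi'(\phi)=1-h(e^{i\phi})>0$ on $(-\phi_0,\phi_0)$, and combined with the oddness $\psi(-\phi)=-\psi(\phi)$ coming from $b\in\mathbb{R}$, the map $\psi$ restricts to a strictly increasing bijection $[-\phi_0,\phi_0]\to[-\theta_b,\theta_b]$, where $\theta_b:=\psi(\phi_0)$.

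Next I would observe that any fixed point $R=e^{i\phi}$ on $\partial\mathbb{D}$ has $|f'(R)|=h(R)$, so it is attracting, parabolic, or repelling exactly when $|\phi|<\phi_0$, $=\phi_0$, or $>\phi_0$. Hence $f_{\xi,b}$ has an attracting or parabolic fixed point on $\partial\mathbb{D}$ if and only if $\xi=\Phi(e^{i\phi})$ for some $|\phi|\leq\phi_0$, which by the bijection above is exactly $\xi=e^{i\vartheta}$ with $\vartheta\in[-\theta_b,\theta_b]$. To see that this $\theta_b$ is the one from Lemma~\ref{lem:derivative 1}, observe that at $\xi=e^{i\theta_b}$ the parabolic fixed point sits at $e^{i\phi_0}$ with $f'=h=1$ (positive since $f$ is orientation-preserving on $\partial\mathbb{D}$ for $b<1$), matching the characterization in that lemma; uniqueness of the parabolic parameter forces agreement.

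For the second claim fix $\vartheta\in(0,\theta_b)$ (the cases $\vartheta<0$ and $\vartheta=0$ are symmetric and trivial, respectively), and let $R_\xi=e^{i\phi_\xi}$ with $\phi_\xi=\psi^{-1}(\vartheta)\in(0,\phi_0)$ be the attracting fixed point. I would prove that
\begin{equation*}
A := \{e^{it}\mid 0\leq t\leq \phi_\xi\}
\end{equation*}
is forward-invariant under $f_{\xi,b}$. The continuous lift $\tilde f(\phi):=\vartheta+d\gamma(\phi)$ of $f|_{\partial\mathbb{D}}$ satisfies $\tilde f(0)=\vartheta$, $\tilde f(\phi_\xi)=\phi_\xi$ (both from $\psi(\phi_\xi)=\vartheta$), and $\tilde f'(\phi)=h(e^{i\phi})<1$ on $[0,\phi_\xi]$ because $\phi_\xi<\phi_0$. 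Thus $\tilde f$ is a monotone strict contraction of $[0,\phi_\xi]$ into $[\vartheta,\phi_\xi]\subset[0,\phi_\xi]$, so $A$ is forward-invariant and every orbit in $A$ converges to $R_\xi$. Consequently $A$ lies in the basin of $R_\xi$, hence in the Fatou set; being connected and containing $R_\xi$, it sits in the Fatou component of $R_\xi$, and then, as a connected subset of $\partial\mathbb{D}$, it sits in the immediate attracting interval, so in particular $1\in A$ does as well. The main subtlety to navigate is wrap-around: since $f|_{\partial\mathbb{D}}$ has degree $d\geq 2$, a subarc can generically be stretched over $\partial\mathbb{D}$ several times, but the identity $\tilde f'=h$ together with $h<1$ on $A$ makes $\tilde f|_A$ a genuine one-dimensional contraction, which is what drives the invariance and convergence arguments cleanly.
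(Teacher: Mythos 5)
Your proposal is correct. The first half is close in spirit to the paper's argument: both parametrize fixed points by $\xi(R)=R/g(R)^d$ and use the monotonicity of $|f'|$ in $|\mathrm{Arg}\,R|$; the paper phrases injectivity via ``at most one attracting or parabolic fixed point,'' while you get it from the sign of $\psi'=1-h$, which is essentially the same fact written out in a lift. Where you genuinely diverge is the second claim. The paper argues dynamically in parameter space: the endpoints of the immediate attracting interval are repelling fixed points, vary continuously with $\xi$, can never coincide with $+1$ (since $|f'_{\xi,b}(1)|<1$), and hence $+1$ stays inside the interval by a continuity/connectedness argument starting from $\xi=1$. You instead work in phase space for a single fixed $\xi$, exhibiting the explicit forward-invariant arc $A=\{e^{it}:0\le t\le\phi_\xi\}$ via the lift $\tilde f(\phi)=\vartheta+d\gamma(\phi)$ and showing it is a uniform contraction into itself. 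This is a nice alternative: it avoids invoking holomorphic dependence of the repelling boundary points, is entirely self-contained calculus, and as a bonus your arc $A$ is exactly the interval $I_b$ that the paper introduces right after the theorem to finish Theorem~\ref{thm:dynamicsF}(i), so you get its forward invariance for free, rather than having to argue separately that $I_b$ lies in the immediate attracting interval and is therefore invariant. The one thing the paper's route gives that yours does not explicitly emphasize is the global picture of how the attracting interval moves as $\xi$ sweeps the arc $[-\theta_b,\theta_b]$, but that is not needed for the statement at hand.
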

\begin{proof}
We will consider the changing behavior of the map $f_{\xi,b}$ as $\xi \in \partial \mathbb D$ varies, for $b$ fixed. By the implicit function theorem the fixed points of $f_{\xi,b}$, i.e. the solutions of $f_{\xi, b}(R) - R = 0$, depend holomorphically on $\xi$, except when $f_{\xi,b}^\prime(R) = 1$.
By Lemma~\ref{lem:derivative 1} this occurs exactly at two parameters $\xi = e^{\pm i\theta}$.

Recall that the absolute value of the derivative, $|f_{\xi,b}^\prime(R)|$, is independent of $\xi$, strictly increasing in $|\mathrm{Arg}(R)|$, and that $|f_{\xi,b}^\prime(+1)| <1$ while $|f_{\xi,b}^\prime(-1)| >1$. For each $R\in \partial \mathbb D$ there exists a unique $\xi \in \partial \mathbb D$ for which $R$ is fixed, inducing a map $R \mapsto \xi(R)$, holomorphic in a neighborhood of $\partial \mathbb D$. Since there can be at most one attracting or parabolic fixed point on $\partial \mathbb D$, the map $R \mapsto \xi(R)$ is injective on the circular interval $\{ R \; : \; |f_{\xi,b}^\prime(R)| \le 1\}$. It follows that the image of this interval under $R \mapsto \xi(R)$ equals $\{e^{i\vartheta}\mid \vartheta\in [-\theta,\theta]\}$, and that for $\xi$ outside of this interval the function $f_{\xi,b}$ cannot have a parabolic or attracting fixed point on $\partial \mathbb D$.

When $f_{\xi,b}$ has an attracting fixed point on $\partial \mathbb D$, the boundary points of the immediate attracting interval are necessarily fixed points. The fact that there cannot be other attracting or parabolic cycles on $\partial \mathbb D$ implies that the two boundary points are repelling. It follows also that $R = +1$ cannot be a boundary point of the immediate attracting interval, and since these boundary points vary holomorphically (with $\xi$), and thus in particular continuously, it follows that $R= +1$ is always contained in the immediate attracting interval.

\end{proof}

To complete the proof of Theorem~\ref{thm:dynamicsF} (i) we need to define the circular interval $I_b$. We let $I_b$ be the shortest closed circular interval with boundary points $1$ and $R_0$, the attracting fixed point of $f$.
Then clearly $-1\notin I_b$.
Since $I_b$ is contained in the immediate attracting interval and since $f$ is orientation preserving it follows that that $I_b$ is forward invariant for $f$.


This finishes the proof of part (i).

\subsubsection{Proof of part (ii)}
We start with analyzing what happens when $b\le b_c$.
\begin{prop}\label{expansion1}
For $0 < b \le b_c$ the parameters $\xi$ for which the orbit of $R_0 = \xi$ under the map $f_{\xi,b}$ takes on the value $-1$ is dense in $\partial \mathbb{D}$.
\end{prop}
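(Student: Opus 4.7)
The plan is to translate the statement into a density-of-preimages question for an explicit sequence of rational maps, and then extract that density from a real-valued recursion on the angular derivative. Define
$$
\Psi_n(\xi) := f_{\xi,b}^{\circ n}(1), \qquad n\ge 0.
$$
Since $f_{\xi,b}(1)=\xi$, one has $\Psi_n(\xi)=f_{\xi,b}^{\circ(n-1)}(\xi)$ for $n\ge 1$, so the proposition is equivalent to the claim that $\bigcup_{n\ge 1}\Psi_n^{-1}(-1)\cap \partial\mathbb{D}$ is dense in $\partial\mathbb{D}$. Each $\Psi_n$ depends holomorphically on $\xi$ and sends $\partial\mathbb{D}$ to $\partial\mathbb{D}$.

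I would next set $\xi=e^{i\theta}$, $\Psi_n(\xi)=e^{i\phi_n(\theta)}$, and study the angular derivative $\phi_n'(\theta)=\xi\Psi_n'(\xi)/\Psi_n(\xi)$. Differentiating the recursion $\Psi_{n+1}=\xi\,g(\Psi_n)^d$ and using the explicit formula $f'(R)=f(R)\cdot d(1-b^2)/[(R+b)(bR+1)]$ together with the on-circle identity
$$
(R+b)(bR+1)\;=\;R\cdot\bigl((1+b^2)+2b\,\mathrm{Re}\,R\bigr)\qquad (R\in \partial\mathbb{D}),
$$
I would verify that the multiplier $f'(\Psi_n)\Psi_n/\Psi_{n+1}$ is a \emph{positive real} number equal to $|f'(\Psi_n)|$. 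Combined with $\phi_1'\equiv 1$ this yields the recursion
$$
\phi_{n+1}'(\theta)\;=\;1+|f'_{\xi,b}(\Psi_n(\xi))|\cdot \phi_n'(\theta),
$$
and in particular each $\Psi_n\colon \partial\mathbb{D}\to\partial\mathbb{D}$ is orientation-preserving.

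Next I would plug in the hypothesis. On $\partial\mathbb{D}$ the function $R\mapsto |f'_{\xi,b}(R)|$ is minimised at $R=1$, with $|f'_{\xi,b}(1)|=d(1-b)/(1+b)\ge 1$ precisely when $b\le b_c$ (with equality at $b=b_c$). Hence $|f'_{\xi,b}(\Psi_n)|\ge 1$ uniformly, and the recursion gives by induction
$$
\phi_n'(\theta)\;\ge\; n \qquad\text{for all } \theta,\ n\ge 1.
$$
Now the recursion $\deg \Psi_{n+1}=1+d\deg \Psi_n$ with $\deg \Psi_1=1$ yields $\deg\Psi_n=(d^n-1)/(d-1)$, and orientation-preservation implies that between any two consecutive points of $\Psi_n^{-1}(-1)\subset \partial\mathbb{D}$ the map $\Psi_n$ makes one full revolution, so $\int \phi_n'(\theta)\,d\theta=2\pi$ over each gap. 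The lower bound $\phi_n'\ge n$ therefore forces every gap to have length at most $2\pi/n$, so $\Psi_n^{-1}(-1)$ is $(2\pi/n)$-dense in $\partial\mathbb{D}$; letting $n\to\infty$ gives the claimed density.

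The main obstacle I anticipate is verifying that the multiplier $f'(\Psi_n)\Psi_n/\Psi_{n+1}$ is actually a nonnegative real number: a priori the terms on the right-hand side of the differentiated relation are complex and could cancel, and only the symmetry of $f$ with respect to $\partial\mathbb{D}$ makes the real-valued recursion possible. The borderline case $b=b_c$ is a minor subtlety (expansion is not uniform on $\partial\mathbb{D}$), but the weak bound $\phi_n'\ge 1+\phi_{n-1}'$ already yields linear growth, which is enough for density even though the exponential rate is lost.
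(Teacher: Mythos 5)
Your proposal is correct and follows essentially the same approach as the paper: both exploit the expansion $|f'_{\xi,b}|\ge 1$ on $\partial\mathbb{D}$ together with the extra unit of winding contributed by the prefactor $\xi$ in $f_{\xi,b}(R)=\xi\,h(R)$, showing that the image of a small parameter interval under $\xi\mapsto f_{\xi,b}^{\circ n}(\xi)$ wraps around the circle. The paper phrases this directly as a length identity for intervals ($\ell[R_{n+1}(s),R_{n+1}(t)]=\ell[h(R_n(s)),h(R_n(t))]+\ell[s,t]$) while you phrase the infinitesimal version via the angular-derivative recursion $\phi_{n+1}'=1+|f'(\Psi_n)|\phi_n'\ge 1+\phi_n'$, which is a slightly more quantitative packaging of the same mechanism.
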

\begin{proof}
We consider the orbits for parameters $\xi$ in a small circular interval $[s,t] \subset \partial \mathbb{D}$, with $s \neq t$.
The initial values $R_0= \xi$ lie in this interval. Since the map
$$
h: R \mapsto \left(\frac{R+b}{b R+1}\right)^d
$$
is expanding, this interval is mapped to an interval $[h(s), h(t)]$ which is strictly larger. Let us write for $\xi\in [s,t]$, $R_1(\xi) = f_{\xi,b}(R_0(\xi))$ and $R_{n+1}(\xi) = f_{\xi,b}(R_n(\xi))$. By Lemma \ref{lemma:covering} the covering map $h$ is orientation preserving. Noting that $f_{\xi,b}(R) = \xi \cdot h(R)$ it follows that the length $\ell[R_1(s),R_1(t)]$ satisfies
$$
\ell[R_1(s),R_1(t)] = \ell[sh(s),th(t)]=\ell[sh(s),sh(t)] + \ell[s,t]
$$
Hence as $n \rightarrow \infty$ we have $\ell[R_n(s),R_n(t)]\rightarrow \infty$, counting multiplicity. Thus there must exist $\xi \in [s,t]$ and $n \in \mathbb N$ for which $f_{\xi,b}^n(\xi) = -1$.
\end{proof}

\begin{remark}\label{rem:dens}
We remark that this proposition combined with Corollary~\ref{cor:tree to dynamics} implies that for $d\geq 2$ and for $b\in (0,\frac{d-1}{d+1}]$ the roots of the partition function of the Ising model for all graphs of maximum degree $d+1$ lie dense in the unit circle. In particular for $b\in (0,1)$, the roots for all graphs lie dense in the unit circle.
\end{remark}

We next look at the case $b\in (b_c,1)$.
\begin{prop}
Let $b_c < b < 1$ and let $\xi_0$ be such that $f_{\xi_0,b}$ has no attracting or parabolic fixed point on $\partial \mathbb D$. Then there are parameters $\xi$ arbitrarily close to $\xi_0$ and $n \in \mathbb N$ for which $f_{\xi,b}^n(\xi) = -1$.
\end{prop}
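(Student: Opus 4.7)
The plan is to adapt the arc-length argument of Proposition~\ref{expansion1} by replacing its (failed) pointwise expansion of $h$ with the uniform \emph{hyperbolic} expansion of $f_{\xi,b}$ on its Julia set, which becomes available precisely because there is no attracting or parabolic fixed point on $\partial\mathbb{D}$.

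First I want to establish that the dynamics is uniformly hyperbolic on $\partial\mathbb{D}$, uniformly in $\xi$ ranging over a small sub-arc $K\subset\partial\mathbb{D}$ containing $\xi_0$. By Theorem~\ref{thm:im. at. interval} the hypothesis amounts to $\xi_0=e^{i\vartheta_0}$ with $|\vartheta_0|>\theta_b$, and the same property is preserved throughout a sufficiently small $K$. For each $\xi\in K$ the Fatou components $\mathbb{D}$ and $\hat{\mathbb{C}}\setminus\overline{\mathbb{D}}$ are invariant and absorb the two critical points $-b,-1/b$; by the Fatou--Sullivan classification there can be no further attracting or parabolic cycles, so $J(f_{\xi,b})=\partial\mathbb{D}$, and as the critical points lie off the circle the map is hyperbolic on its Julia set. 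By continuous dependence of the dynamics on $\xi$ and compactness of $K$, there exist constants $C>0$ and $\lambda>1$ such that $|(f_{\xi,b}^{\circ n})'(R)|\ge C\lambda^n$ for every $\xi\in K$, $R\in\partial\mathbb{D}$, and $n\ge 1$.

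Next I iterate a sub-arc around $\xi_0$. Writing $\xi=e^{i\psi}$ for $\psi\in[\psi_s,\psi_t]$ parametrising $K$ and $f_{\xi,b}^{\circ n}(\xi)=e^{i\Psi_n(\psi)}$ (choosing a continuous lift), the recursion $R_n(\xi)=\xi\,h(R_{n-1}(\xi))$ becomes
\[
\Psi_n(\psi) \;=\; \psi + d\,\nu(\Psi_{n-1}(\psi)),
\]
where $g(e^{i\varphi})=e^{i\nu(\varphi)}$ parametrises the action of the M\"obius map $g$ on the circle. A direct computation gives $d\nu'(\varphi)=|f_{\xi,b}'(e^{i\varphi})|=:\lambda(\varphi)$, independent of $\xi$. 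Differentiating the recursion yields $\Psi_n'(\psi)=1+\lambda(\Psi_{n-1}(\psi))\,\Psi_{n-1}'(\psi)$, and iterating (dropping the non-negative lower-order terms) I obtain
\[
\Psi_n'(\psi)\;\ge\;\prod_{k=0}^{n-1}\lambda(\Psi_k(\psi))\;=\;\big|(f_{\xi,b}^{\circ n})'(\xi)\big|\;\ge\;C\lambda^n,
\]
by the chain rule applied to the orbit $e^{i\Psi_k(\psi)}=f_{\xi,b}^{\circ k}(\xi)$ together with the uniform hyperbolicity from the previous paragraph.

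Integrating gives $\Psi_n(\psi_t)-\Psi_n(\psi_s)\ge C\lambda^n(\psi_t-\psi_s)$, which exceeds $2\pi$ once $n$ is large. Since $\Psi_n'\ge 1$ the lift $\Psi_n$ is strictly increasing and continuous, so its image modulo $2\pi$ covers all of $\partial\mathbb{D}$; in particular there is $\psi^*\in[\psi_s,\psi_t]$ with $\Psi_n(\psi^*)\equiv\pi\pmod{2\pi}$, i.e.\ $f_{\xi^*,b}^{\circ n}(\xi^*)=-1$ for $\xi^*=e^{i\psi^*}$. Since $K$ may be chosen arbitrarily small, this gives parameters $\xi$ arbitrarily close to $\xi_0$ with the required property. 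The main obstacle is the uniform hyperbolicity step: the pointwise statement $J(f_{\xi_0,b})=\partial\mathbb{D}$ follows from elementary Fatou theory, but upgrading to uniform expansion constants on a parameter neighbourhood relies on J-stability, which is available here because the critical orbits stay trapped in the Fatou disks $\mathbb{D}$ and $\hat{\mathbb{C}}\setminus\overline{\mathbb{D}}$ as $\xi$ varies in $K$. Once this is secured the angular calculation is mechanical.
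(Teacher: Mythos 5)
Your proof is correct and takes essentially the same approach as the paper's: both rest on the observation that when there is no attracting or parabolic fixed point on $\partial\mathbb D$ the two disks become attracting basins absorbing the critical points $-b, -1/b$, so that $J(f_{\xi,b})=\partial\mathbb D$ is a hyperbolic set, and then use that hyperbolicity persists for nearby $\xi$ to run the same interval-expansion argument as in Proposition~\ref{expansion1}. The only cosmetic difference is bookkeeping: you work with the Euclidean derivative via angular lifts $\Psi_n$ and the $(C,\lambda)$ formulation of hyperbolicity, while the paper introduces a conformal expanding metric on the Julia set and measures arc-lengths in that metric (fixing it once at the parameter $\xi_0$), thereby dispensing with the multiplicative constant $C$.
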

\begin{proof}
By the assumption that  $f_{\xi_0,b}$ has no attracting or parabolic fixed point on $\partial \mathbb D$, it follows that both $\mathbb D$ and $\overline{\mathbb D}^c$ are attracting basins, and hence the orbits of the two critical points stay bounded away from the Julia set $J = \partial \mathbb D$. It follows that $J$ is a hyperbolic set, i.e. that there exists a metric on $J$, equivalent to the Euclidean metric, with respect to which $f$ is a strict expansion. We will refer to this metric as the hyperbolic metric on $J$.

The proof concludes with an argument similar to the one used in Proposition \ref{expansion1}. For a circular interval $I \subset \partial \mathbb D$ we denote by $\mathrm{length} I$ the diameter with respect to the hyperbolic metric on $\partial \mathbb D$. Let $[s,t] \subset \partial \mathbb D$ be a proper subinterval containing $\xi_0$, small enough so that the maps $f_{\xi,b}$ for $\xi \in [s,t]$ are all strict expansions with respect to the hyperbolic metric obtained for the parameter $\xi_0$. It follows that
$$
\begin{aligned}
\mathrm{length}[f_{s,b}(s), f_{t,b}(t)] = & \mathrm{length}[f_{s,b}(s), f_{s,b}(t)] + \mathrm{length}[f_{s,b}(t),f_{t,b}(t)]\\
> & \kappa \cdot \mathrm{length}[s,t],
\end{aligned}
$$
where the equality follows from the fact that the maps $f_{\xi,b}$ are all orientation preserving, and the constant $\kappa > 1$ is a uniform lower bound on the expansion of the maps $f_{\xi,b}$ for $\xi \in [s,t]$.

By induction it follows that $\mathrm{length}[f_{s,b}^n(s), f_{t,b}^n(t)] > \kappa^n \cdot \mathrm{length}[s,t]$, counting multiplicity. Thus for sufficiently large $n$ the interval $[f_{s,b}^n(s), f_{t,b}^n(t)]$ will contain the unit circle, proving the existence of a parameter $\xi \in [s,t]$ for which $f_{\xi,b}^{n+1}(+1) = -1$.
\end{proof}

Together with Lemma~\ref{lem:derivative 1} and Theorem~\ref{thm:im. at. interval}, this result completes the proof of Theorem~\ref{thm:dynamicsF} (ii).

\subsection{Proof of Theorem~\ref{thm:dynamicsAF}}
\subsubsection{Proof of part (i)}
We start by proving some observations indicating behavior different from the ferro-magnetic case.
\begin{lemma}
Let $b \in (1,1/b_c)$, and let $\xi$ be such that $f_{\xi, b}$ has a parabolic fixed point $R_0$ on $\partial \mathbb D$. Then $\xi$ does not lie in the shortest closed circular interval bounded by $1$ and $R_0$.
\end{lemma}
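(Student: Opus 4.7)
The plan is to compute $\arg\xi$ explicitly in terms of $\varphi := \arg R_0$ and show that the resulting value avoids the short arc $[0,\varphi]$ on $\partial\mathbb{D}$. By complex conjugation symmetry---replacing $\xi$ with $\overline{\xi}$ and $R_0$ with $\overline{R_0}$ if necessary, which is a symmetry of $f_{\xi,b}$ since the coefficients of $g$ are real---I may assume $\varphi \in (0,\pi)$, so that the shortest closed arc from $1$ to $R_0$ is $\{e^{i\psi}:\psi\in[0,\varphi]\}$.

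A direct calculation gives
\[
\operatorname{Im} g(R_0) \;=\; \frac{\sin\varphi\,(1-b^2)}{|bR_0+1|^2} \;<\; 0
\]
for $b>1$ and $\varphi\in(0,\pi)$, so $\arg g(R_0) = -\beta$ for some $\beta\in(0,\pi/2)$. Since $R_0$ is a fixed point of $f$, $\xi = R_0/g(R_0)^d$, giving
\[
\arg\xi \;\equiv\; \varphi + d\beta \pmod{2\pi}.
\]
In particular $\arg\xi > \varphi$, so the only way $\xi$ could land back in the short arc $[0,\varphi]$ is if $\varphi + d\beta$ wraps past $2\pi$. The task is therefore reduced to showing $\varphi + d\beta < 2\pi$.

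For this I would extract an explicit expression for $\cos\beta$. Using $|g(R_0)|=1$, $2\cos\beta = g(R_0) + 1/g(R_0)$; expanding the right-hand side and substituting $R_0+1/R_0 = (d(b^2-1)-(1+b^2))/b$ (which follows directly from the parabolic equation \eqref{eq:beta>1}) yields after simplification
\[
\cos\beta \;=\; \frac{(d-1)b^2 + (d+1)}{2bd}.
\]
Optimising this rational function in $b\in(1,1/b_c)$ reveals a unique critical point at $b^*=\sqrt{(d+1)/(d-1)}$ with $\cos\beta^* = \sqrt{1-1/d^2}$, so $\beta \le \arcsin(1/d)$.

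To finish, I would use that $x\mapsto \arcsin(x)/x$ is strictly increasing on $(0,1)$ (as $\arcsin$ is strictly convex with $\arcsin(0)=0$), so $d\arcsin(1/d)$ is decreasing in $d$ and bounded above by $2\arcsin(1/2)=\pi/3$ for all $d\ge 2$. Combined with $\varphi<\pi$, this gives $\varphi + d\beta < \pi + \pi/3 < 2\pi$, the desired bound, so $\arg\xi \bmod 2\pi$ lies strictly in $(\varphi,2\pi)$ and $\xi$ is not in the short arc. The main technical obstacle is the algebraic simplification yielding the closed form for $\cos\beta$; the optimisation in $b$ and the $\arcsin$ estimate are both elementary.
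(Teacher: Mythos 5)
Your proof is correct, but it takes a genuinely different and more computational route than the paper's. The paper argues conceptually: since $|f_{1,b}^\prime|$ is minimal at $+1$, increasing in $|\mathrm{Arg}(R)|$, and equals $1$ at the parabolic point $R_0$, the derivative has modulus $<1$ on the whole open arc between $1$ and $R_0$; integrating along that arc gives $|\mathrm{Arg}(f_{1,b}(R_0))| < |\mathrm{Arg}(R_0)|$, and since $f_{1,b}$ is orientation reversing these arguments have opposite sign, so $\xi = R_0/f_{1,b}(R_0)$ has argument in $(\varphi, 2\varphi) \subset (\varphi, 2\pi)$. That is the entire proof. You instead extract the closed form $\cos\beta = \frac{(d-1)b^2+(d+1)}{2bd}$ from the parabolic equation, optimize over $b$ to get $\beta \le \arcsin(1/d)$, and then bound $d\arcsin(1/d) \le \pi/3$ via convexity of $\arcsin$, concluding $\varphi + d\beta < 4\pi/3 < 2\pi$. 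Your computations check out (I verified the $\cos\beta$ identity, the location of the critical point $b^* = \sqrt{(d+1)/(d-1)}$, and the monotonicity claim). The one expository slip is asserting $\beta \in (0,\pi/2)$ immediately after observing $\mathrm{Im}\,g(R_0)<0$: that alone only gives $\beta\in(0,\pi)$, and the sharper bound follows only once you have computed $\cos\beta > 0$. The trade-off: your argument is entirely explicit and yields a quantitative bound $d\beta \le \pi/3$ that is much stronger than needed, while the paper's argument is shorter and reuses the derivative-monotonicity machinery already established, giving the cleaner and in fact sharper conclusion $|\mathrm{Arg}(f_{1,b}(R_0))| < \varphi$ (i.e.\ $\arg\xi \in (\varphi, 2\varphi)$) with no algebra at all.
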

\begin{proof}
Recall that $R_0$ is not equal to $+1$ or $-1$, and that $|f_{1,b}^\prime|$ is minimal at $+1$ and increases monotonically with $|\mathrm{Arg}(R)|$, and is therefore strictly smaller than $1$ on the open circular interval bounded by $1$ and $R_0$. By integrating $f^\prime_{1,b}$ over this open interval it follows from $f_{1,b}(1) = 1$ that $|\mathrm{Arg}(f_{1,b}(R_0))| < |\mathrm{Arg}(R_0)|$. Since $f_{1,b}$ is orientation reversing it follows that $\mathrm{Arg}(f_{1,b}(R_0))$ and $\mathrm{Arg}(R_0)$ have opposite sign. The statement now follows from
$$
\xi = \frac{R_0}{f_{1,b}(R_0)}.
$$
\end{proof}

\begin{lemma}
Let $b \in (1, 1/b_c)$, and let $\xi$ be such that $f = f_{\xi, b}$ has a parabolic fixed point $R_0$ on $\partial \mathbb D$. The shortest closed circular interval $I$ bounded by $1$ and $\xi$ cannot be forward invariant.
\end{lemma}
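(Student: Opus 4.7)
Plan: Assume toward a contradiction that $I$ is forward invariant under $f = f_{\xi,b}$. The strategy is to exploit the parabolic structure at $R_0$ to extract a monotone $f^2$-orbit inside $I$ whose limit must be a repelling fixed point of $f^2$, which is impossible.

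First I would set up coordinates: parameterize $I = \{e^{it} : t \in [0,\alpha]\}$ with $1$ at $t=0$, $\xi$ at $t=\alpha$ and, by the previous lemma, $R_0 = e^{i\alpha_0}$ at $t=\alpha_0 \in (0,\alpha)$. Since $b>1$, $f$ is orientation reversing on $\partial\mathbb D$ (Lemma~\ref{lemma:covering}), so $f^2$ is orientation preserving and $I$ is also $f^2$-invariant. Because $f$ has no critical points on $\partial\mathbb D$, $f^2|_I$ is a strictly increasing homeomorphism onto its image, and both half-arcs $[1,R_0]$ and $[R_0,\xi]$ of $I$ are preserved. Finally $(f^2)'(R_0) = (f'(R_0))^2 = 1$, so $R_0$ is a parabolic fixed point of $f^2$ with multiplier $+1$.

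Next I would analyze the local Leau--Fatou flower of $f^2$ at $R_0$. By the discussion earlier in the paper, the Fatou set of $f$ consists of precisely the two components $\mathbb D$ and $\hat{\mathbb C}\setminus\overline{\mathbb D}$, each a parabolic basin of $R_0$ containing one of the two critical points $-1/b$ and $-b$ of $f$; in particular $f$ has no attracting or parabolic cycle other than $\{R_0\}$. The Leau--Fatou flower theorem, combined with the anti-holomorphic involution $R\mapsto 1/\overline R$ from Lemma~\ref{lemma:anti} and the fact that the two attracting petals of $f^2$ at $R_0$ are exactly the Fatou components (one in $\mathbb D$, one in $\hat{\mathbb C}\setminus\overline{\mathbb D}$, with axes perpendicular to $\partial\mathbb D$), forces the two repelling petals to be tangent to $\partial\mathbb D$. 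Hence, for sufficiently small $s>0$, one has $f^2(e^{i(\alpha_0+s)}) = e^{i(\alpha_0+s')}$ with $s' > s$.

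To derive the contradiction, pick such a small $\epsilon>0$ and define $\epsilon_n$ by $f^{2n}(e^{i(\alpha_0+\epsilon)}) = e^{i(\alpha_0+\epsilon_n)}$. By the monotonicity of $f^2$ on the sub-arc $[R_0,\xi]$ together with the local repelling behaviour just established, the sequence $(\epsilon_n)$ is strictly increasing, and forward-invariance of $I$ forces $\epsilon_n \le \alpha - \alpha_0$. Therefore $\epsilon_n \uparrow \epsilon_*$ for some $\epsilon_* \in (\epsilon,\,\alpha-\alpha_0]$, and by continuity $a := e^{i(\alpha_0+\epsilon_*)}$ is a fixed point of $f^2$ distinct from $R_0$. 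Then $a$ lies on an $f$-cycle of period $1$ or $2$; since no attracting or parabolic cycle other than $\{R_0\}$ exists, this cycle is repelling, so $|(f^2)'(a)|>1$. A nontrivial orbit cannot converge monotonically to a repelling fixed point of a smooth local expansion, contradiction.

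The main obstacle I expect is rigorously pinning down the Leau--Fatou flower at $R_0$: specifically, showing that $f^2$ has exactly two attracting petals, located perpendicular to $\partial\mathbb D$, and hence two repelling petals tangent to $\partial\mathbb D$. This requires combining the paper's critical-orbit bookkeeping (constraining the total number of attracting petals to two, one per Fatou component) with the anti-holomorphic symmetry of $f$ across the unit circle.
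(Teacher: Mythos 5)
Your argument is correct in outline but takes a genuinely different route from the paper's. The paper's proof is short and global: $R_0$ lies in $I^\circ$ (as you also observe) and, being a parabolic fixed point, lies in the Julia set, so $I^\circ$ meets the Julia set. Taking a small disk $U$ around such a Julia point $p$ and using that $f$ has empty exceptional set, one has $\bigcup_n f^n(U)=\hat{\mathbb C}$; but forward invariance of $I$ and of $\mathbb D\cup(\hat{\mathbb C}\setminus\overline{\mathbb D})$ would confine all forward images of $U$ to $I\cup\mathbb D\cup(\hat{\mathbb C}\setminus\overline{\mathbb D})$, a proper subset of $\hat{\mathbb C}$, giving the contradiction. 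Your proof instead works locally near $R_0$: you show that $f^2$ repels along $\partial\mathbb D$ at $R_0$, extract a strictly monotone $f^2$-orbit inside $I$, and observe that its limit would be a second fixed point of $f^2$ in $I$, necessarily repelling since no other non-repelling cycles exist; monotone convergence to a repelling fixed point of an orientation-preserving circle map is impossible. Both are valid; the paper's argument sidesteps the flower analysis entirely and is considerably shorter, while yours localizes the contradiction to the immediate neighbourhood of $R_0$ and produces a concrete repelling orbit.

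The one genuine gap --- which you flag yourself --- is the assertion that $f^2$ has \emph{exactly} two attracting petals at $R_0$ with axes normal to $\partial\mathbb D$. You do not justify this petal count, and it is not automatic: a priori $f^2$ could have $2\nu$ petals for some $\nu>1$. However, the conclusion you actually need, namely that $f^2|_{\partial\mathbb D}$ repels on both sides of $R_0$, does not require the count. In a local coordinate $w$ centred at $R_0$ in which $\partial\mathbb D$ becomes the real line, the anti-holomorphic symmetry of Lemma~\ref{lemma:anti} becomes complex conjugation and the germ $f^2(w)=w+c\,w^m+\cdots$ therefore has real coefficients. Since $\mathbb R$ near $0$ is invariant and lies in the Julia set, neither real direction can be an attracting direction (an attracting petal lies in the Fatou set and contains a segment of its axis), and a sign analysis of $c\,t^m$ then forces $m$ odd and $c>0$, i.e.\ repulsion on both real sides. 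Alternatively, you could simply cite the paper's own assertion, in the discussion directly after Lemma~\ref{lem:derivative 1}, that nearby points on the unit circle move away from the parabolic fixed point. With either patch, the remainder of your monotone-orbit argument is sound.
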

\begin{proof}
First observe that the parabolic fixed point is not equal to $1$. By the previous lemma it also cannot be equal to $\xi$. 

Suppose now that $f(I)\subset I$ for the purpose of contradiction. Then the open interval $I^\circ$ is also forward invariant, and since neither $1$ nor $\xi$ is equal to the parabolic fixed point, it follows that $I^\circ$ cannot be contained in the parabolic basin. Hence $I^\circ$ must intersect the Julia set, say in a point $p$. Let $U$ be a sufficiently small open disk centered at $p$ so that $U \cap \partial \mathbb D \subset I^\circ$. Since $p$ lies in the Julia set, it follows that
$$
\bigcup_{n \in \mathbb N} f^n(U)  = \widehat{\mathbb C}.
$$
Since $f$ is forward invariant on $\mathbb D\cup \widehat{\mathbb{C}}\setminus \overline{\mathbb D}$, this contradicts the assumption that $f$ is forward invariant on $I$.

Note that we used here that the exceptional set of the rational function $f$ is empty, which follows immediately from the fact that there are no attracting periodic cycles. We recall that we refer to \cite{Mil} for background on complex dynamical systems.

\end{proof}

It follows from the above lemma that the situation is different from the orientation preserving case: when $f=f_{\xi, b}$ has an attracting fixed point on $\partial \mathbb D$ the point $+1$ does not necessary lie in the immediate attracting interval. If it did, then $f$ would be forward invariant on the shortest interval with boundary points $1$ and $\xi$, which cannot happen for $\xi$ close to the parabolic parameter as follows from the lemma above.

Recall that the boundary points of the immediate attracting interval form a repelling periodic cycle. Since for $\xi$ near $+1$ the point $R= +1$ does lie in the immediate attracting interval, while for $\xi$ near the parabolic parameters the point $+1$ does not, it follows by continuity of the repelling periodic orbit that there must be a parameter for which $+1$ is one of the boundary points. In fact, it follows quickly from the fact that $|f^\prime|$ strictly increases with $|\mathrm{Arg}(R)|$ that there exists a unique $\alpha=\alpha_b\in(0,\pi)$ (with $\alpha_b<\theta_b)$ such that for $\xi=e^{\pm i\alpha}$, $+1$ is a boundary point of the immediate attracting interval.
To see that $\alpha$ is unique, suppose there is another such $\alpha'\in (0,\pi)$. We may assume that $\alpha'>\alpha$. Set $\xi=e^{i\alpha}$ and $\xi'=e^{i\alpha'}$.
Then since $f_{\xi',b}(\xi')=\frac{\xi'}{\xi} f_{\xi}(\xi')$ and since $|f_{\xi,b}^\prime(\xi)|>1$ (as $|(f^{\circ 2})^\prime(1)|>1$), it follows that the distance between $f_{\xi',b}(\xi')$ and $f_{\xi,b}(\xi)$ is strictly larger than the distance between $\xi$ and $\xi'$, and hence would need to be a positive multiple of $2\pi$ larger. But then $f_{\xi^\prime,b}$ would map the attracting interval to the entire unit circle, which gives a contradiction.

We note that $\alpha_b$ is the solution in $(0,\pi)$ to
\begin{equation}\label{eq:def alpha}
e^{i\alpha_b}\cdot\left(\frac{e^{i\alpha_b}+b}{e^{i\alpha_b}b+1}\right)^d=1
\end{equation}
with minimal argument. See Figure \ref{fig:lambda01} for the values of $\alpha_b$ and the parabolic parameter $\theta_b$ for varying values of $b > 1$. For a comparable curve depicting the values of $\theta_b$ for $b<1$, see Figure 2 from \cite{Roederetal18}.

\begin{figure}
\includegraphics[width=\textwidth]{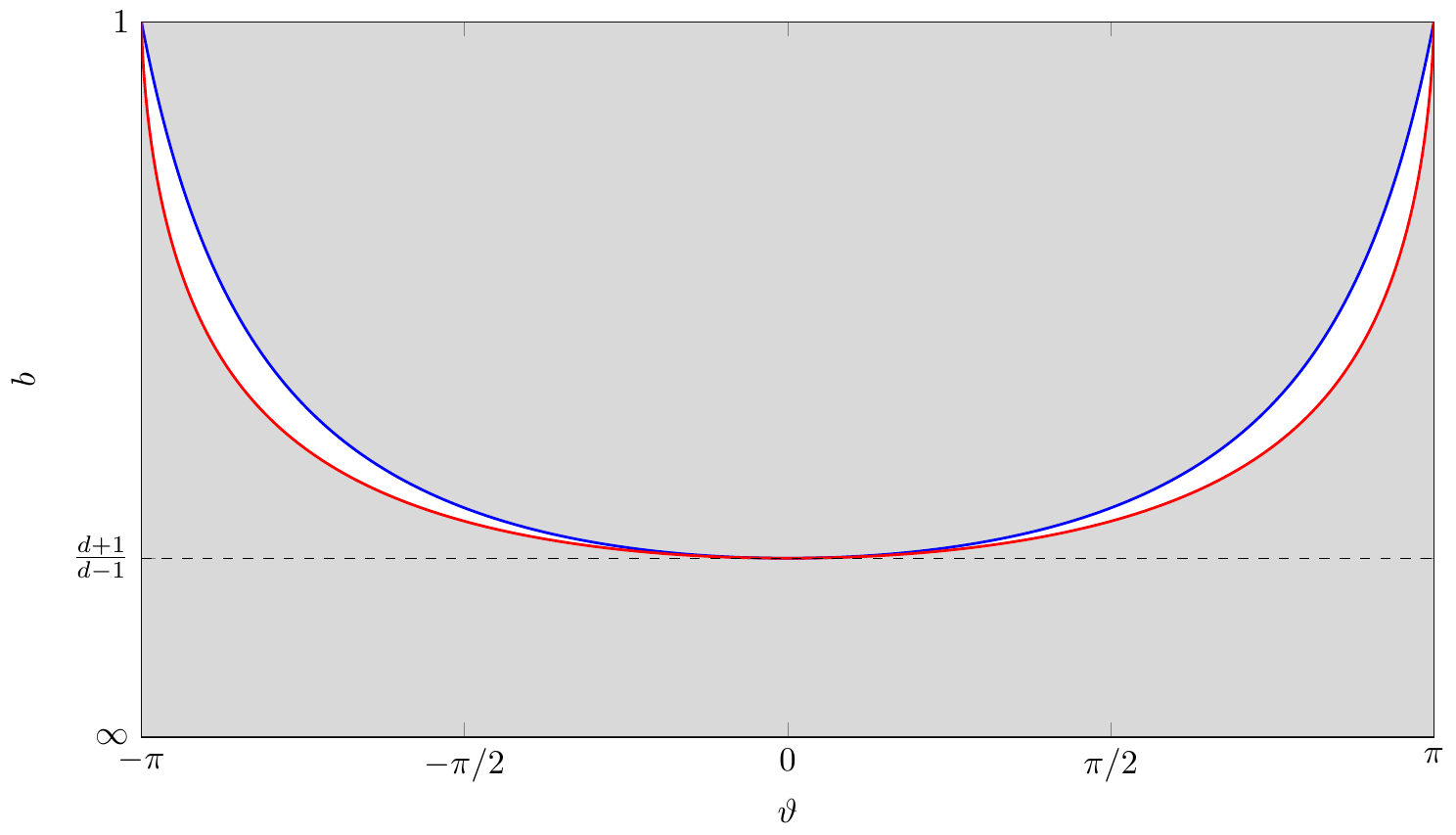}
\caption{The values of $\alpha_b$ (upper curve, in blue) and $\theta_b$ (lower curve, in red) for $d=2$, as $b$ varies from $1$ to $\infty$.}
\label{fig:lambda01}
\end{figure}

We summarize the above discussion in the following theorem
\begin{thm}\label{thm:second im. at. interval}
Let $b\in (1,1/b_c)$ and let $\alpha=\alpha_b$.
If $\vartheta\in (-\alpha,\alpha)$ and $\xi = e^{i\vartheta}$, then the point $+1$ lies in the immediate attracting interval.
\end{thm}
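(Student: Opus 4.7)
The strategy is to combine three ingredients already assembled in the text: (a) continuous dependence on the parameter $\xi$ of the attracting fixed point and of the boundary points of the immediate attracting interval; (b) the base case $\xi=1$, at which $+1$ is itself the attracting fixed point of $f_{1,b}$ (since $|f_{1,b}'(1)| = d(b-1)/(b+1) < 1$ for $b<1/b_c$); and (c) the uniqueness of $\alpha_b$ as the smallest $|\vartheta|$ at which $+1$ coincides with a boundary point of the immediate attracting interval, which was established in the discussion preceding the theorem.

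More concretely, I would proceed as follows. First, I would observe that, because we are in the range $b\in (1,1/b_c)$, the map $f_{\xi,b}|_{\partial\mathbb D}$ is orientation reversing, and that for $\xi\in \partial\mathbb D$ sufficiently close to $1$ the map has an attracting fixed point $R_0(\xi)$ near $+1$, depending holomorphically on $\xi$ by the implicit function theorem (the derivative at the fixed point is bounded away from $-1$ for such $\xi$ by Lemma~\ref{lem:derivative 1}). The two boundary points $p_1(\xi), p_2(\xi)$ of the immediate attracting interval form a repelling $2$-cycle of $f_{\xi,b}$; these are the repelling fixed points of $f_{\xi,b}^{\circ 2}$ closest to $R_0(\xi)$ on either side, and by the implicit function theorem applied to $f_{\xi,b}^{\circ 2}(R)-R=0$ they also depend continuously on $\xi$ as long as they remain repelling. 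Since the only attracting/parabolic cycle of $f_{\xi,b}$ lies in $\partial \mathbb D$ (cf.\ the earlier classification of Fatou components), and since the parabolic parameters on $\partial\mathbb D$ are exactly $e^{\pm i\theta_b}$ with $\alpha_b<\theta_b$, there is no such loss of hyperbolicity for $|\vartheta|\leq \alpha_b$.

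Next, I would define
\[
A := \{\vartheta \in [0,\pi] \mid +1 \text{ lies in the immediate attracting interval of } f_{e^{i\vartheta},b}\}.
\]
By the base case, $0\in A$, so $A$ is nonempty; by the continuous motion of $p_1(\xi), p_2(\xi)$, the set $A$ is relatively open in $[0,\theta_b)$, and it can only fail to be closed at a value $\vartheta^*$ where $+1$ becomes one of $p_j(e^{i\vartheta^*})$. By the uniqueness statement for $\alpha_b$ established in the discussion (via the estimate $|f_{\xi,b}'(\xi)|>1$ combined with the orientation-reversing expansion property, which rules out a second parameter on $(0,\pi)$ where $+1$ joins the period-$2$ repelling cycle), the smallest such $\vartheta^*$ is $\alpha_b$. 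Therefore $[0,\alpha_b)\subset A$. Applying the same argument to negative $\vartheta$ (or using the complex-conjugation symmetry of $f_{\xi,b}$ given by Lemma~\ref{lemma:anti}) yields $(-\alpha_b,\alpha_b)\subset A$, which is the desired statement.

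The main obstacle I anticipate is the bookkeeping for the continuity of the boundary points $p_1(\xi),p_2(\xi)$ of the immediate attracting interval: one must verify that as $\xi$ traces $[0,\alpha_b)$ the relevant repelling $2$-cycle persists (does not undergo bifurcation or escape to infinity within $\partial\mathbb D$) and bounds the \emph{same} Fatou component as it does at $\xi=1$. This follows from the fact that the only attracting/parabolic basin is the one containing $R_0(\xi)$, so every nearby repelling periodic point on $\partial\mathbb D$ must bound a component of the Fatou set on $\partial\mathbb D$, and a continuity/monotonicity argument (using that $|f'|$ is strictly increasing in $|\mathrm{Arg}(R)|$) shows that $p_1(\xi),p_2(\xi)$ are exactly the two repelling fixed points of $f_{\xi,b}^{\circ 2}$ flanking $R_0(\xi)$. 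Once this is in place, the argument above completes the proof.
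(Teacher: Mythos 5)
Your proposal is correct and follows essentially the same route as the paper: the paper's preceding discussion also argues by the base case at $\xi=1$, continuous dependence of the repelling periodic boundary cycle on $\xi$, and the uniqueness of $\alpha_b$ (derived from the monotonicity of $|f'|$ in $|\mathrm{Arg}(R)|$) to conclude that $+1$ stays in the immediate attracting interval throughout $(-\alpha_b,\alpha_b)$. Your formulation as an open--closed argument on the set $A$ is a slightly more explicit packaging of the same continuity argument the paper gives informally.
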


To finish the proof of Theorem~\ref{thm:dynamicsAF} (i) we need to show that $I_b$ (which was defined as the shortest closed circular interval with boundary points $1$ and $\xi$) is forward invariant for $f$.
This follows since  $f:I_b\to \partial \mathbb D$ is an orientation reversing injective contraction (with respect to the hyperbolic metric on the attracting basin).

\subsubsection{Proof of part (ii)}
We first note that contrary to the case that $b<1$, one cannot expect parameters $\xi$ with $-1 \in \{f_{\xi, b}^n(+1)\}$ arbitrarily close to any point
$$
\xi_0 \in \partial \mathbb D \setminus \{e^{i \vartheta} : \vartheta \in (-\theta_b, + \theta_b)\},
$$
as there will be $\xi_0$ for which the point $+1$ lies in the attracting basin, just not in the \emph{immediate} attracting basin. In this case the orbit of $\xi_0$ will still converge to the attracting fixed point, and, except for at most countably many parameters $\xi_0$, will avoid $-1$. This is then still the case for $\xi$ sufficiently close to $\xi_0$.

Our goal is to show that there exist parameters $\xi \in \mathbb C$ arbitrarily close to $\xi_0 :=e^{i\alpha_b}$  for which the orbit of $+1$ contains $-1$ (where $\alpha_b$ is defined in \eqref{eq:def alpha}); the complex conjugate $e^{-i\alpha_b}$ is completely analogues. Recall that $f(\xi_0) = 1$, and that this periodic orbit is repelling. It follows that the point $R= +1$ cannot be \emph{passive}, i.e. the family of holomorphic maps
$$
g_n(\xi) = f_{\xi,b}^n(+1)
$$
cannot form a normal family in a neighborhood of $\xi_0$. To see this, note that for an open set of parameters $\xi$ (for example, those for which $+1$ does lie in the immediate attracting interval) accumulating on $\xi_0$ the maps $g_n(\xi)$ converge to $R_0(\xi)$, but the points $g_n(\xi_0)$ remain bounded away from $R_0(\xi_0)$. Thus the sequence of maps $g_n$ cannot have a convergent subsequence in any neighborhood of $\xi_0$.

Recall that the strong version of Montel's Theorem says that a family of holomorphic maps into the Riemann sphere avoiding three distinct points is normal. We claim that this implies that the point $-1$ cannot be avoided for all parameters in a neighborhood of $\xi_0$. Of course, if $-1$ is avoided, then so are all its inverse images. Since the map $f_{\xi_0,b}$ induces a $d$-fold covering on the unit circle, it is clear that there exist points $z_{-1}, z_{-2}$, distinct from each other as well as from $-1$, such that $f_{\xi_0,b}(z_{-2}) = z_{-1}$ and $f_{\xi_0,b}(z_{-1}) = -1$. Since $f_{\xi,b}$ varies holomorphically with $\xi$, it follows that for all parameters $\xi$ sufficiently close to $\xi_0$ we can similarly find points $z_{-2}(\xi)$ and $z_{-1}(\xi)$, varying holomorphically with $\xi$, for which $f_{\xi,b}(z_{-2}(\xi)) = z_{-1}(\xi)$ and $f_{\xi,b}(z_{-1}(\xi)) = -1$.

There is a uniquely defined M\"obius transformation $\varphi_\xi$ (depending holomorphically) on $\xi$ that maps $-1$, $z_{-1}(\xi)$ and $z_{-2}(\xi)$ to $-1$, $z_{-1}$ and $z_{-2}$ respectively. Define the holomorphic maps
$$
h_n(\xi) = \varphi_\xi \circ g_n(\xi).
$$
Since the family $\{g_n\}$ cannot be normal near $\xi_0$, neither can the family $\{h_n\}$. Hence the latter cannot avoid the three distinct points $\{-1, z_{-1}, z_{-2}\}$, which implies that there are $\xi$ arbitrarily close to $\xi_0$ for which there exist $n \in \mathbb N$ such that $g_n(\xi)$ lies in $\{-1, z_{-1}(\xi), z_{-2}(\xi)\}$. This completes the proof.

\section{Zero-free regions}\label{sec:zero-free}
It is our aim in this section to prove part (i) of Theorems A and B.
To this end let us fix $d\in \mathbb{N}_{\geq 2}$ and $b\in (\frac{d-1}{d+1},1)\cup(1,\frac{d+1}{d-1})$ and $\vartheta\in (-\theta_b,\theta_b)$ when $b<1$ and $\vartheta\in (\alpha_b,\alpha_b)$ when $b>1$.
Let us fix $\xi=e^{i\vartheta}$, and write $f=f_{\xi,b}$.

Our strategy is as follows.
First we give a forward invariant domain for the function $f$ in Subsection~\ref{ssec:domain} and then show that this domain is invariant for a multivariate version of the function $f$.
We then use this to prove part (i) for trees with boundary conditions in Subsection~\ref{ssec:trees with b}.
Finally, we prove the result for all graphs in Subsection~\ref{ssec:graphs}

\subsection{Invariant domain}\label{ssec:domain}
We let $I_b$ be the forward invariant interval for $f$ from Theorem~\ref{thm:dynamicsF} (i) when $b<1$ and from Theorem~\ref{thm:dynamicsAF} (i) when $b>1$.

We introduce the $\mathbb R_{\ge 0}$-cone generated by $I_b$:
\begin{equation}\label{eq:cone}
C=C_b:=\{z=re^{i\phi}\mid 0 \le r \le \infty , \phi \in I_b\}.
\end{equation}
Recall the M\"obius transformation
$$
g: R \mapsto \frac{R + b}{b R + 1}.
$$

\begin{lemma}\label{lemma:forward}
The cone $C$ is forward invariant under $f$, that is, for any $z\in C$, $f(z)\in C$.
\end{lemma}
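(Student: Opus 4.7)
The plan is to reduce the assertion to a one-variable statement about arguments and then close it with a short calculation. Given $z \in C$, write $z = re^{i\phi}$ with $r \in [0,\infty]$ and $\phi$ an angle of $I_b$ (identifying $I_b$ with its arc of angles in $(-\pi,\pi)$). Since $f(z) = \xi \cdot g(z)^d$, what must be shown is
\[
\arg f(re^{i\phi}) \;=\; \vartheta + d\cdot \arg g(re^{i\phi}) \;\in\; I_b,
\]
using a continuous branch of $\arg$ on $C$. At the degenerate values $r = 0$ and $r = \infty$ the right-hand side equals $\vartheta = \arg \xi$, which lies in $I_b$: in the antiferromagnetic case $\xi$ is by construction a boundary point of $I_b$, while in the ferromagnetic case forward invariance of $I_b$ on $\partial \mathbb D$ (Theorem~\ref{thm:dynamicsF}(i)) applied to the boundary point $1\in I_b$ gives $\xi = f(1)\in I_b$.

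The main analytic step is that $r \mapsto \arg g(re^{i\phi})$ is unimodal on $[0,\infty]$ with its extremum at $r = 1$. Clearing the denominator,
\[
g(re^{i\phi}) \;=\; \frac{br^2 + r(1+b^2)\cos\phi + b \;+\; i\,r(1-b^2)\sin\phi}{|bre^{i\phi}+1|^2}.
\]
Differentiating the ratio $\mathrm{Im}/\mathrm{Re}$ with respect to $r$ and simplifying, the numerator collapses to a constant multiple of $b(1-r^2)$, so the unique positive critical point is $r = 1$. The extremum is a maximum when $(1-b^2)\sin\phi > 0$ and a minimum otherwise.

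Consequently, as $r$ varies over $[0,\infty]$, $\arg f(re^{i\phi})$ sweeps a closed interval in $\mathbb R$ with endpoints $\vartheta$ and $\vartheta + d\arg g(e^{i\phi}) = \arg f(e^{i\phi})$. Both endpoints are angles of $I_b$: the first by the preceding paragraph, the second because $e^{i\phi} \in I_b$ and $I_b$ is forward invariant under $f|_{\partial \mathbb D}$ (by Theorem~\ref{thm:dynamicsF}(i) in the ferromagnetic case and Theorem~\ref{thm:dynamicsAF}(i) in the antiferromagnetic case). Since $I_b$ is a closed circular arc contained in $(-\pi,\pi)$ and is therefore convex as a set of angles, the entire interval between these two endpoints lies in $I_b$, yielding $f(re^{i\phi}) \in C$.

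The principal subtlety is the uniform treatment of the two regimes: for $b<1$ the Möbius map $g$ preserves each open half-plane while for $b>1$ it swaps them, so the sign of the extremum of $\arg g$ flips. However, the algebraic identity $b(1-r^2) = 0$ for the critical point is insensitive to this sign, and the bound on $\arg f$ is ultimately controlled by the two boundary values $\vartheta$ and $\arg f(e^{i\phi})$, which lie in $I_b$ in both cases for the reasons above. This gives a single argument that handles both the ferromagnetic and antiferromagnetic settings.
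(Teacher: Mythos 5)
Your proof is correct and takes a genuinely different route from the paper's. The paper shows that the two boundary rays of the cone $C$ are mapped into $C$: for the ray $H$ through the nontrivial boundary point $e^{i\phi_0}$ of $I_b$, it observes that $g(H)$ is an arc of the circle through $b$, $1/b$, and $g(e^{i\phi_0})$, and that this circle is orthogonal to $\partial\mathbb D$ (by conformality of $g$, since $H \perp \partial\mathbb D$), so that the tangent to $g(H)$ at $g(e^{i\phi_0})$ is radial and hence the argument is extremal there; it then asserts that checking the two boundary rays suffices. You instead fiber $C$ into \emph{all} of its rays and show, by a direct derivative computation, that along each ray $r\mapsto re^{i\phi}$ the argument of $g$ is unimodal with its unique critical point at $r=1$, so the swept interval of $\arg f$ has endpoints $\vartheta$ and $\arg f(e^{i\phi})$, both of which lie in $I_b$ by forward invariance. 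The underlying geometric fact is the same (the critical point of $\arg g$ along a ray sits on $\partial\mathbb D$), but your ray-by-ray decomposition is more explicit and avoids the paper's implicit boundary-to-interior reduction, and it handles $b<1$ and $b>1$ in a single computation. Two small points deserve attention. First, differentiating $\mathrm{Im}/\mathrm{Re}$ is not literally differentiating $\arg g$ when $\mathrm{Re}\,g$ is not known to stay positive on the ray; the cleaner route is to compute
\[
\frac{d}{dr}\arg g(re^{i\phi})=\mathrm{Im}\!\left(\frac{g'(re^{i\phi})\,e^{i\phi}}{g(re^{i\phi})}\right)
=\frac{b(1-b^2)(1-r^2)\sin\phi}{A^2+B^2},
\]
with $A=b(r^2+1)\cos\phi+(1+b^2)r$ and $B=b(r^2-1)\sin\phi$, which has the same numerator up to positive factors, so your sign analysis and the location of the critical point are unaffected, but you should state the derivative of $\arg$ rather than of the ratio. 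Second, when you say the endpoint $\arg f(e^{i\phi})$ is an angle of $I_b$, you are using the value produced by the continuous branch of $\arg$ along the ray, while forward invariance only tells you that the \emph{point} $f(e^{i\phi})$ lies on $I_b$; one should add the (easy) remark that the continuous-branch value agrees with the principal argument, e.g.\ because the two are equal at $\phi=0$ (both give $\vartheta$) and their difference is a continuous integer multiple of $2\pi$ as $\phi$ varies over the angles of $I_b$ (here using that $f(I_b)$ and $g(I_b)$ avoid $-1$). With those two clarifications the argument is complete.
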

\begin{proof}
We first consider the case $b<1$, where $f$ is orientation preserving on the circle.
It suffices to show that the half lines $H$ and $\mathbb{R}_{\geq 0}$ bounding $C$ are mapped into $C$ by $f$.
Since $\mathbb{R}_{\geq 0}$ is mapped to the half line through $\xi$ and $\xi\in I_b$, it remains to show this for $H$.

We claim that $g(e^{i\phi_0})$ equals the principal value of $(e^{i\phi_0}\xi^{-1})^{1/d}$.
To see this, let us denote the preimages of $e^{i\phi_0}$ under $f$ as $R_0=e^{i\phi_0},\ldots,R_{d-1}$.
They are exactly equal to the $d$ values $g^{-1}((e^{i\phi_0}\xi^{-1})^{1/d})$.
Since $f$ is forward invariant on $I_b$ and orientation preserving on $\partial \mathbb D$, none of the $R_1,\ldots,R_{d-1}$ lie in the interval $I_b$.
Since $g$ preserves orientation and since $g(1)=1$, it follows that $g(e^{i\phi_0})$ is indeed equal to the principal value of $(e^{i\phi_0}\xi^{-1})^{1/d}$. We will write $z=e^{i\phi}:=g(e^{i\phi_0})$ from now on.

Now we consider the circle $\gamma$ through the three points $b,1/b $ and $z$.
This circle is exactly the image of the line $L:=H\cup ( -H)$ under the transformation $g$, as $0\mapsto b$, $\infty\mapsto 1/b$ and $e^{i\phi_0}\mapsto z$ under $g$. 

We next claim that the half line $z \cdot \mathbb R_{\ge 0}$
only intersects $\gamma$ in the point $z$. Indeed, this follows from the fact that the line $H$ intersects $\partial \mathbb D$ normally, and $g$ is conformal. Hence the circle $\gamma$ intersects the unit circle $\partial \mathbb D$ normally in $z$, which implies that
the line $z \cdot \mathbb R$ is the tangent line to $\gamma$ at $z$. In other words, the argument of $g(z)$ for $z \in C$ is extremal when $z = R_0$.

This then implies that for any point $y$ on the image of $H$ under $g$ we have that the argument of $\xi y^d$ is between $0$ and $\phi_0$.
In other words, the image of $H$ under $f$ is contained in $C$, as desired.

Now suppose that $b>1$. Again it suffices to show that the two half lines $H:=\{r\xi\mid r\geq 0\}$ and $\mathbb{R}_{\geq 0}$ are mapped into $C$ by $f$, and again we only need to check this for the half line $H$. Its image under $g$ is contained in the circle through $b, \frac{1}{b}$ and $g(\xi)$, which again intersects $\partial \mathbb D$ normally in $g(\xi)$. In this case the argument of $g(z)$ is therefore extremal when $z = \xi$. It follows that the half line $g(\xi)\cdot \mathbb R_{\ge 0}$ intersects $g(H)$ only in $g(\xi)$, and the proof is essentially the same as for the $b<1$ case.
\end{proof}

\begin{figure}
\centering%
\begin{subfigure}[b]{0.45\textwidth}
\centering\includegraphics[angle=270,origin=c,width=\textwidth]{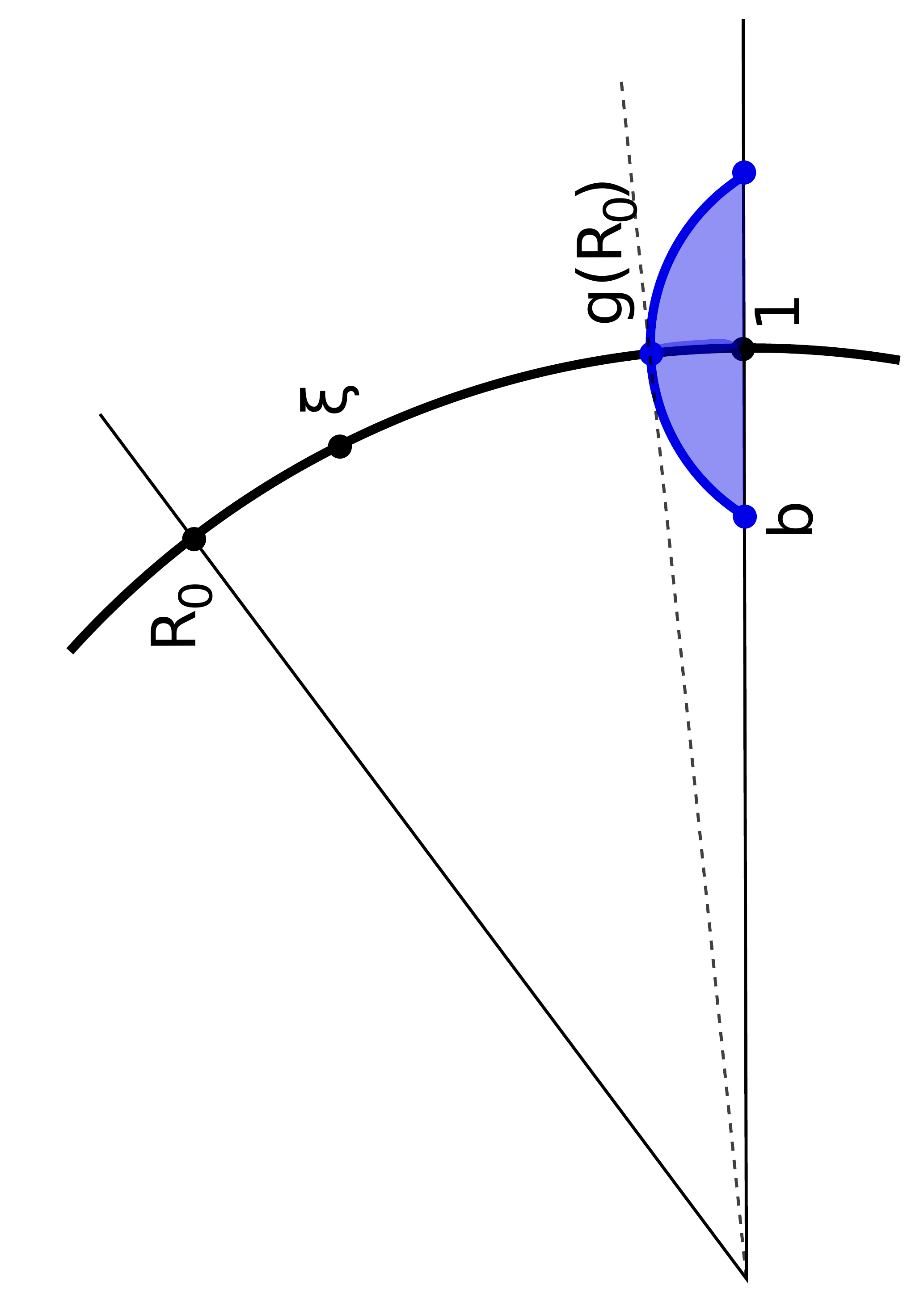}
            \label{eersteplaatje}
\end{subfigure}%
\quad     
\begin{subfigure}[b]{0.45\textwidth}
            \centering
\includegraphics[angle=270,origin=c,width=\textwidth]{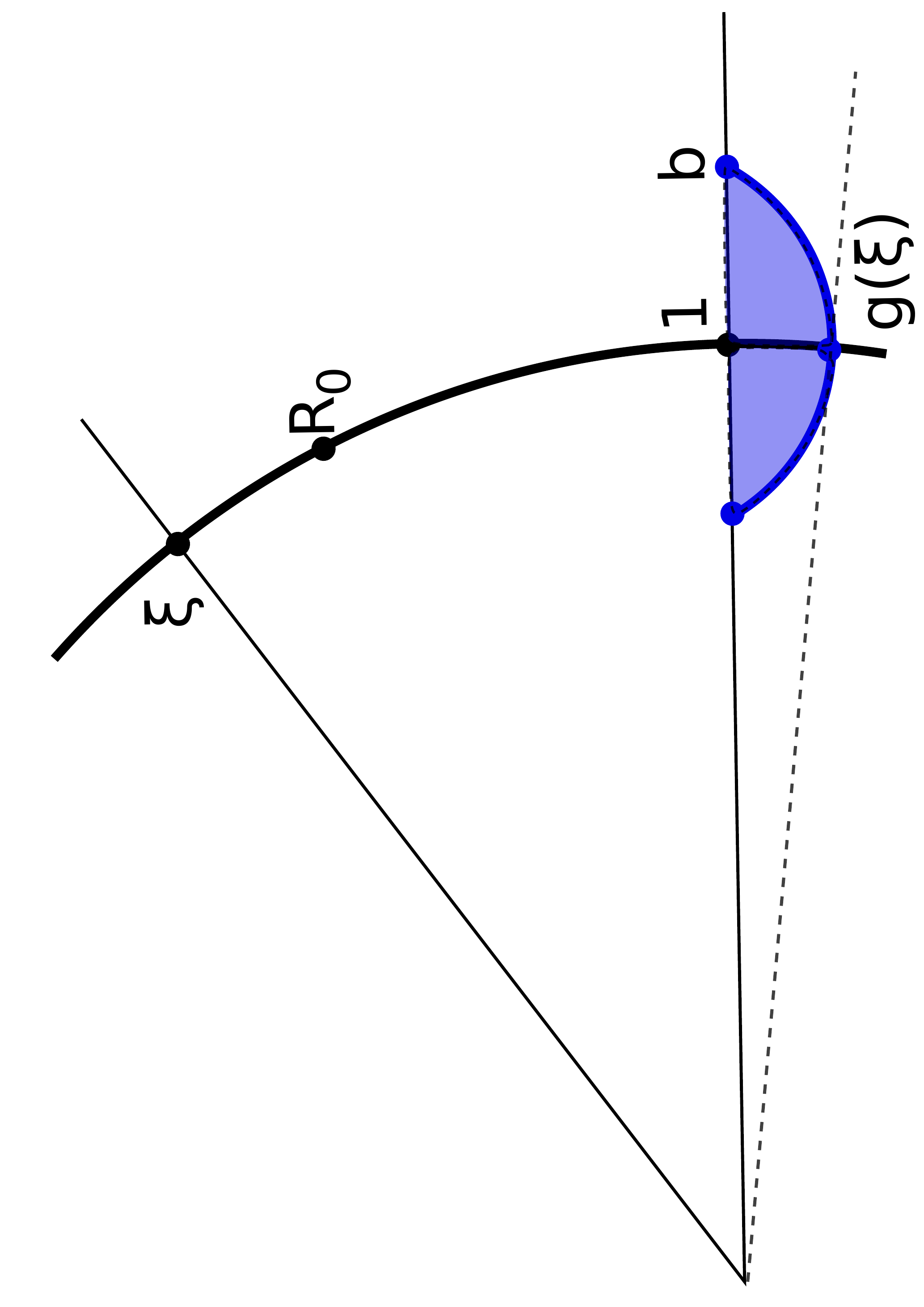}
            \label{tweedeplaatje}
\end{subfigure}
\caption{The image $g(C)$ for $b<1$ on the left and for $b>1$ on the right.}
    \label{plaatjes}
\end{figure}

\begin{lemma}
For each $R_1,R\in I_b$ we have $g(R_1)R\neq -1$.
\end{lemma}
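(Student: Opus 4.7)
The plan is to exploit the fact that $g$ preserves $\partial\mathbb{D}$, so $g(R_1) R$ lies on $\partial\mathbb{D}$ and the equation $g(R_1) R = -1$ is equivalent to $\arg g(R_1) + \arg R \equiv \pi \pmod{2\pi}$. The idea is to parametrise both $I_b$ and $g(I_b)$ as arcs on $\partial \mathbb{D}$ and then check that the sum of their argument ranges avoids $\pm \pi$. By the conjugation symmetry of Lemma~\ref{lemma:anti} I may assume $\vartheta > 0$ throughout.

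In the ferromagnetic case $b \in (b_c, 1)$ the interval is $I_b = \{e^{i\phi} : \phi \in [0, \phi_0]\}$, where $\phi_0 = \arg R_0 \in (0, \pi)$. Since $g$ is orientation-preserving on $\partial\mathbb{D}$, fixes $1$, and (by the formula for $g(R_0)$ computed in the proof of Lemma~\ref{lemma:forward}) satisfies $\arg g(R_0) = (\phi_0 - \vartheta)/d =: \psi_0$, its image is $g(I_b) = \{e^{i\psi} : \psi \in [0, \psi_0]\}$. Hence $\arg g(R_1) + \arg R \in [0, \phi_0 + \psi_0]$, and the problem reduces to showing $\phi_0 + \psi_0 < \pi$. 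Substituting $\arg g(e^{i\phi}) = \phi - 2\arg(1 + be^{i\phi})$ and applying standard tangent half-angle manipulations reduces this inequality to the much cleaner statement $\cos \phi_0 > -b$. The ``attracting or parabolic'' assumption $|f'(R_0)| \le 1$ is exactly what is needed here: it gives $\cos\phi_0 \ge \frac{(d-1)-(d+1) b^2}{2b}$, and a short algebraic check shows this threshold strictly exceeds $-b$ precisely because $b<1$ and $d \ge 2$.

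In the anti-ferromagnetic case $b \in (1, 1/b_c)$ the argument is cleaner and essentially geometric. Here $I_b = \{e^{i\phi} : \phi \in [0, \vartheta]\}$ with $\vartheta < \alpha_b < \pi$. The crucial observation is that for $b > 1$ the Möbius map $g$ is \emph{orientation-reversing} on $\partial\mathbb{D}$ (since $g'(1) = (1-b)/(1+b) < 0$) and fixes $\pm 1$, so it interchanges the open upper and lower semicircles. Therefore $g(I_b)$ is an arc running from $g(1) = 1$ into the lower semicircle, of the form $\{e^{i\psi} : \psi \in [-\beta, 0]\}$ with $\beta := -\arg g(\xi) \in (0, \pi)$. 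The strict upper bound $\beta < \pi$ follows from $\vartheta < \pi$ together with monotonicity of $\arg g$ along the upper semicircle, where it decreases from $0$ at $\phi=0$ to $-\pi$ as $\phi \to \pi$. Consequently $\arg g(R_1) + \arg R \in [-\beta, \vartheta] \subset (-\pi, \pi)$, which cannot equal $\pm \pi$.

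The main obstacle is the ferromagnetic step: unlike the anti-ferromagnetic case, the inequality $\phi_0 + \psi_0 < \pi$ is not a soft geometric observation and genuinely requires $|f'(R_0)| \le 1$. In the parabolic limit $b \to 1$ both $\phi_0 \to \pi$ and the bound becomes tight, so there is no margin beyond the standing hypothesis $b < 1$.
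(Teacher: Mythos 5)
Your proof is correct. A comparison with the paper's argument is worthwhile since you arrive at the same inequalities by a somewhat different path.

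For $b<1$, the paper first reduces to the diagonal case $R_1 = R$ via a min-distance/perturbation argument, then solves $g(R)R=-1$ as $R^2+2bR+1=0$ (so $\operatorname{Re}R=-b$) and compares the real part with that of the parabolic fixed point from \eqref{eq:beta<1}. You instead parametrise $I_b$ and $g(I_b)$ as arcs and bound $\arg g(R_1)+\arg R$ by its value at $R_1 = R = R_0$; this implicitly carries out the same reduction (the endpoint of the sum-of-arguments range corresponds exactly to the diagonal point $R_0$), and your identity $\phi_0 + \psi_0 = 2\arg(b+e^{i\phi_0})$ converts it to $\cos\phi_0 > -b$, which is the same $\operatorname{Re}R_0 > -b$ comparison the paper makes. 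The final algebraic check ($(d-1) > (d-1)b^2$) is identical. So for $b<1$ the content is the same; your presentation avoids the explicit diagonal-reduction lemma, at the cost of having to observe that $g$ preserves the upper semicircle so that $g(I_b)$ is again an anticlockwise arc from $1$.

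For $b>1$ your route is genuinely different and in fact softer. The paper uses forward invariance of $I_b$ under $f$ to get the sharp containment $g(I_b)\subset\{e^{i\psi}:\psi\in[-\vartheta/d,0]\}$, which gives $|\operatorname{Arg}(g(R_1)R)|\le\vartheta<\pi$. You only use that $g$ is an orientation-reversing M\"obius map of $\partial\mathbb D$ fixing $\pm 1$, hence interchanges the open semicircles, so $\arg g(R_1)\in(-\pi,0]$ for $R_1$ in the closed upper semicircle and the sum $\arg g(R_1)+\arg R$ stays in $(-\pi,\pi)$ as soon as $\vartheta<\pi$. This gives a weaker bound $\beta<\pi$ rather than $\beta\le\vartheta/d$, but it suffices, needs no dynamical input (forward invariance is not used), and makes transparent that in the antiferromagnetic regime the lemma is essentially a geometric triviality rather than something tied to the attracting/parabolic threshold. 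Both arguments are fine; yours illuminates why the $b>1$ case feels "cleaner."

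One cosmetic remark: in the $b<1$ case you quote $\psi_0 = (\phi_0-\vartheta)/d$ before switching to $\psi_0 = 2\arg(b+e^{i\phi_0})-\phi_0$. Both are correct, but only the latter is actually used; the first is not needed to identify $g(I_b)$ as the arc $[0,\psi_0]$ (that already follows from $g$ preserving the upper semicircle, fixing $1$, and being orientation-preserving), so it can be dropped.
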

\begin{proof}
Let us first consider the case $b< 1$. We claim that it suffices to prove the statement for $R_1=R$.
Indeed, suppose there exists $R_1,R\in I_b$ such that $g(R_1)R=-1$. Let
\[m=\min\{|R_1-R|\mid g(R_1)R=-1, R_1,R\in I_b\}.\]
If $m\neq 0$, then take $R_1,R\in I_b$ such that $g(R_1)R=-1$ and such that $|R_1-R|=m$.
We may assume $\mathrm{arg}(R_1)>\mathrm{arg}(R)$.
Then there is $R'\in I_b$ with $\mathrm{arg}(R')>\mathrm{arg}(R)$ and, using that $g$ is orientation preserving, $\mathrm{arg}(R'_1)<\mathrm{arg}(R_1)$ with $g(R'_1)R'=-1$. A contradiction.

The condition that $g(R)R=-1$ translates to,
$$
R^2+2b R+1=0.
$$
It can easily be checked that the coefficient of $R$, $2b$, is strictly larger than the coefficient of $R$ found in equation \eqref{eq:beta<1}, as
$$
2b^2 > d(b^2 - 1) + (1+b^2).
$$
Thus, the solutions of $g(R)R=-1$ lie closer to $-1$ than $R_0$, and there is no such solution in $I_b$.

\medskip

When $b > 1$ the maps $f$ and $g$ are orientation reversing. Since $f$ maps $I_b$ into itself, it follows that $g^d = \xi^{-1} \cdot f$ maps $I_b$ into the circular interval between $1$ and $\xi^{-1}$. Since $g(1) = 1$ it follows that $g$ maps the interval $I_b$ into the circular interval bounded by $1$ and the principal value of $\xi^{-\frac{1}{d}}$.
Hence $|\mathrm{Arg}(R_1g(R)) |$ is maximal when $R_1 = \xi$ and $R = 1$, in which case $R_1g(R) = \xi \neq -1$, which completes the proof.
\end{proof}

Let us consider for $\mu,b\in \mathbb C$ the map $F_{\mu,b}:\hat{\mathbb{C}}^d\to \hat{\mathbb{C}}$ defined by
\[
F_{\mu,b}(R_1,\ldots,R_d)\mapsto \mu\cdot \prod_{i=1}^d \frac{R_i+b}{b R_i+1}.
\]
\begin{prop}\label{prop:stable region}
For any $R_1,\ldots,R_{d+1}\in C=C_b$ and any $r\geq 0$:
\begin{itemize}
\item[(i)] $F_{r\cdot \xi,b}(R_1,\ldots,R_d)\in C_b$;
\item[(ii)] $|\mathrm{Arg}(g(R_{d+1})F_{r\cdot \xi,b}(R_1,\ldots,R_d))|<\pi.$
\end{itemize}
\end{prop}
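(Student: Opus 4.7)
By complex conjugation, assume $\vartheta\ge 0$. Let $\phi_0$ denote the argument of the endpoint of $I_b$ other than $1$, so that $R_0:=e^{i\phi_0}$ is the attracting fixed point of $f$ in the ferromagnetic case and equals $\xi$ in the anti-ferromagnetic case; set $\psi_0:=\mathrm{Arg}(g(R_0))$.

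For part (i) I first reduce to $r=1$, since $C$ is closed under non-negative scaling. The central tool will be the pointwise bound $\mathrm{Arg}(g(R))\in[0,\psi_0]$ for $R\in C$ in the ferromagnetic case (where $\psi_0>0$), and $\mathrm{Arg}(g(R))\in[\psi_0,0]$ with $\psi_0<0$ in the anti-ferromagnetic case. To prove this I would observe that $g(C)$ is a bounded simply connected region disjoint from $\{0\}\cup(-\infty,0]$ (because $g^{-1}(0)=-b$ and $g^{-1}((-\infty,0])$ is an interval on the negative real axis, both disjoint from $C$), so $\mathrm{Arg}$ is single-valued and harmonic there. The boundary of $g(C)$ consists of a real segment and a circular arc through $b,\ g(R_0),\ 1/b$, and the proof of Lemma~\ref{lemma:forward} shows that this arc achieves its extremal argument exactly at $g(R_0)=e^{i\psi_0}$; the maximum principle then yields the bound. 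Summing, $\mathrm{Arg}(F_{\xi,b}(R_1,\dots,R_d))=\vartheta+\sum_i\mathrm{Arg}(g(R_i))$ lies in $[\vartheta,\vartheta+d\psi_0]$ in the ferromagnetic case and $[\vartheta+d\psi_0,\vartheta]$ in the anti-ferromagnetic case. In the former, the fixed-point equation $\xi g(R_0)^d=R_0$ combined with continuity in $\vartheta$ (from $\vartheta=0$, $R_0=1$, $\psi_0=0$) forces the branch $\psi_0=(\phi_0-\vartheta)/d$, so $\vartheta+d\psi_0=\phi_0$; in the latter, Lemma~\ref{lemma:forward} applied to $R=\xi$ gives $f(\xi)\in C$, i.e.\ $\vartheta+d\psi_0\in[0,\vartheta]$. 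Either way $\mathrm{Arg}(F)\in I_b$, proving (i).

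For part (ii), $\mathrm{Arg}(g(R_{d+1})F)=\mathrm{Arg}(g(R_{d+1}))+\mathrm{Arg}(F)$ lies in $[\psi_0,\vartheta]$ in the anti-ferromagnetic case, which is already contained in $(-\pi,\pi)$: indeed $\vartheta<\alpha_b<\pi$ by hypothesis, and $\psi_0>-\pi$ since $g$ is a M\"obius bijection of $\partial\mathbb{D}$ with $g(-1)=-1$, so $\xi\ne -1$ implies $g(\xi)\ne -1$. In the ferromagnetic case the sum lies in $[\vartheta,\phi_0+\psi_0]$, and the problem reduces to the strict inequality $\phi_0+\psi_0<\pi$. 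This is the main obstacle.

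To handle it, I would invoke the identity $bR_0+1=R_0\,\overline{(R_0+b)}$, valid whenever $|R_0|=1$, which yields
\[
R_0\,g(R_0)\ =\ \frac{R_0(R_0+b)}{bR_0+1}\ =\ \frac{(R_0+b)^2}{|R_0+b|^2},
\]
and hence $\phi_0+\psi_0=2\,\mathrm{Arg}(R_0+b)\pmod{2\pi}$. Since $\mathrm{Im}(R_0+b)=\sin\phi_0>0$ (as $\phi_0\in(0,\pi)$), the target inequality becomes $\mathrm{Re}(R_0+b)=\cos\phi_0+b>0$. At the parabolic parameter $\vartheta=\theta_b$, equation~\eqref{eq:beta<1} gives $2\cos\phi_0=[d(1-b^2)-(1+b^2)]/b$, whence a direct calculation yields $\cos\phi_0+b=(d-1)(1-b^2)/(2b)>0$ for every $b\in(b_c,1)$. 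For general $\vartheta\in(-\theta_b,\theta_b)$, the attracting fixed point $R_0(\vartheta)$ varies continuously from $R_0(0)=1$ to the parabolic fixed point, so $\phi_0\in[0,\phi_0^{\mathrm{par}}]$, and the monotonicity of $\cos$ on $[0,\pi]$ preserves $\cos\phi_0+b>0$ throughout.
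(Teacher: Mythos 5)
Your argument is correct in substance but follows a genuinely different route from the paper, particularly for part~(ii). For part~(i) the paper writes $\log F(R_1,\dots,R_d)=\tfrac1d\sum_i\log f(R_i)$ and invokes convexity of the rectangle $\log C$ together with Lemma~\ref{lemma:forward} (which already gives $f(R_i)\in C$); you instead derive a pointwise bound on $\mathrm{Arg}(g(R))$ and sum. These are essentially equivalent, but your invocation of the maximum principle has a small gap: it delivers the \emph{upper} bound $\mathrm{Arg}(g(R))\le\psi_0$ (from the tangency of the boundary arc to the radial line at $g(R_0)$), yet you also need the \emph{lower} bound $\mathrm{Arg}(g(R))\ge0$ in the ferromagnetic case (and $\le0$ anti-ferromagnetically), without which the total $\vartheta+\sum_i\mathrm{Arg}(g(R_i))$ could drop below $0$. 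Being ``disjoint from $\{0\}\cup(-\infty,0]$'' does not force this. The easy fix is to note that $\mathrm{Im}\,g(z)=(1-b^2)\,\mathrm{Im}(z)/|bz+1|^2$, so for $0<b<1$ the map $g$ preserves the closed upper half-plane and hence $g(C)\subset\overline{\mathbb{H}}$; alternatively, one can bypass the issue entirely, as the paper does, by averaging the arguments of $f(R_i)\in C$ rather than those of $g(R_i)$.

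For part~(ii) the two proofs diverge more sharply. The paper reduces to the separate lemma that $g(R_1)R\neq-1$ for $R_1,R\in I_b$, proved by a minimality argument reducing to $R_1=R$ followed by a direct coefficient comparison between the quadratic $R^2+2bR+1=0$ and equation~\eqref{eq:beta<1}. You instead compute the extremal angle $\phi_0+\psi_0$ explicitly through the clean identity $R_0\,g(R_0)=(R_0+b)^2/|R_0+b|^2$, reducing the claim to $\cos\phi_0+b>0$, which you verify at the parabolic parameter via equation~\eqref{eq:beta<1} — obtaining the explicit margin $(d-1)(1-b^2)/(2b)$ — and propagate by the monotonicity of the attracting fixed point in $\vartheta$ (which is indeed available from the proof of Theorem~\ref{thm:im.\ at.\ interval}). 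Both arguments bottom out at the same comparison with~\eqref{eq:beta<1}; yours is more explicit and quantitative, while the paper's minimality/extremality shortcut avoids ever computing $\psi_0$. Apart from the small unfinished step noted above, your plan is sound.
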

\begin{proof}
Since $C$ is a cone, it suffices to prove this for $r=1$.
Let us write $F=F_{\xi,b}$.
First we prove (i)
We note that $\log(C)$ is a rectangle and in particularly it is convex. (We take the branch of the logarithm that is real on the positive real line.)
Let us fix $R_1,\ldots,R_d\in C$. Then, since $f(R_i)\in C$ by Lemma \ref{lemma:forward}, we know that $\log(f(R_i))\in \log(C)$.
Then,
\[
\log(F(R_1,\ldots,R_d))=\sum_{i=1}^d \frac{\log(\xi)}{d}+ \log\left (\frac{R_i+b}{b R_i+1}\right)=\sum_{i=1}^d \frac{1}{d} \log(f(R_i)),
\]
and this is contained in $\log(C)$ by convexity.
This implies that $F(R_1,\ldots,R_d)$ is contained in $C$, as desired.

To prove (ii), by (i) it suffices to show that for any $R_1,R\in C$ we have $|\mathrm{Arg}(g(R_1)R)|<\pi$.
Suppose to the contrary that for some $R_1\in C$ and $R\in C$ we have $g(R_1)R\in \mathbb{R}_{<0}$.
The positive half line through $R_1$ intersects the unit circle normally at some point $R_1'$.
It follows that $|\text{Arg}(g(R'_1))|$ is not smaller than $|\text{Arg}(g(R_1))|$, cf. Figure~\ref{plaatjes}.
Since $C$ is a cone, it then follows that we can find $R_1'',R'\in C$ of norm one such that $g(R''_1)R'=-1$.
The previous lemma however gives that $g(R''_1)R'\neq -1$, a contradiction.
\end{proof}

\begin{remark}\label{rem:zero}
For the purpose of finding an efficient algorithm for approximating $Z_G(\mu,b)$, for any graph $G$ of maximum degree at most $d+1$, it is important that $Z_G$ does not vanish for $\mu$ sufficiently close to zero. By the result of Lee-Yang this is immediate for $0 < b < 1$. For $b >1$ the statement follows quickly from the formula for $F_{\xi,b}(R_1, \ldots, R_d)$. Indeed, write $D_r = \{z: |z|< r\}$ for some $0 < r < \frac{1}{b}$ and consider $R_1, \ldots, R_d \in D_r \cup \{\infty\}$. Then one observes that the possible values of
$$
\prod_{i = 1}^d \frac{R_i + b}{b R_i + 1}
$$
are bounded. Therefore for $|\mu|$ sufficiently small one obtains that
$$
F_{\mu,b}(R_1, \ldots, R_d) \in D_r.
$$
In the analysis that follows the forward invariant set $D_r \cup \{\infty\}$, which does not contain the point $-1$, can therefore play the same role as the cone $C_b$ studied in Proposition~\ref{prop:stable region}.

We note that the zero-free neighborhood of the origin can also be derived by using a result of Ruelle \cite{Ru71}. In fact, this argument gives a neighborhood that is independent of the maximum degree of the graph.
\end{remark}

\subsection{Trees with boundary conditions}\label{ssec:trees with b}
Given a graph $G=(V,E)$. Let $X\subseteq V$ and let $\tau:X\to \{0,1\}$ be a boundary condition on $X$.
We call vertices $u\in X$ \emph{fixed} and vertices $v\in V\setminus X$ \emph{free}.

\begin{prop}\label{prop:good ratios trees}
Let $G=(V,E)$ be any tree in $\mathcal{G}_{d+1}$.
Let $X\subseteq V$ and let $\tau:X\to \{0,1\}$ be a boundary condition on $X$.
Set $\xi_v=\xi$ for $v\notin X$ and choose $\xi_v\neq 0$ arbitrarily for $v\in X$.
Then for any $v\in V$ the ratio $R_{G,\tau,v}$ does not lie in $\mathbb{R}_{<0}$. 

\end{prop}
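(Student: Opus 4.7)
The plan is to proceed by induction on $|V(G)|$, strengthening the statement so that Proposition~\ref{prop:stable region} can drive the recursion. Concretely, I will first establish the stronger claim: for any tree $G\in \mathcal{G}_{d+1}$ and any vertex $u$ whose degree in $G$ is at most $d$, the ratio $R_{G,\tau,u}$ lies in the cone $C=C_b$ of~\eqref{eq:cone}. Since $-1\notin I_b$ we have $C\cap \mathbb{R}_{<0}=\emptyset$, so this already settles the proposition whenever $v$ has degree at most $d$. The case $v\in X$ is also immediate, since then $R_{G,\tau,v}\in\{0,\infty\}\subseteq C$, and in particular the base case of a single-vertex tree is verified (also yielding $R_{G,\tau,v}=\xi\in I_b\subseteq C$ when $v$ is free).

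For the inductive step of the strengthened claim, take a free vertex $u$ of degree $k\leq d$, with neighbors $u_1,\ldots,u_k$ and corresponding components $G_1,\ldots,G_k$ of $G-u$. Each $u_i$ has degree at most $d$ in $G_i$ (the edge to $u$ is lost), so the inductive hypothesis applies and gives $R_{G_i,\tau,u_i}\in C$. Lemma~\ref{lem:ratio} then yields
\[
R_{G,\tau,u}=\xi\prod_{i=1}^k g(R_{G_i,\tau,u_i}).
\]
Using $g(1)=1$ together with $1\in I_b\subseteq C$, I pad this product with $d-k$ trivial factors $g(1)$, rewriting the right-hand side as $F_{\xi,b}$ evaluated at $d$ arguments in $C$. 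Proposition~\ref{prop:stable region}(i) (applied with $r=1$) then places $R_{G,\tau,u}$ in $C$, closing the induction.

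To finish the original proposition, the only remaining case is a free vertex $v$ of maximum degree $d+1$. With neighbors $u_1,\ldots,u_{d+1}$ and the corresponding components $G_1,\ldots,G_{d+1}$ of $G-v$, each $u_i$ has degree at most $d$ in $G_i$, so the strengthened claim gives $R_{G_i,\tau,u_i}\in C$ for every $i$. Lemma~\ref{lem:ratio} permits the factorisation
\[
R_{G,\tau,v}=g(R_{G_{d+1},\tau,u_{d+1}})\cdot F_{\xi,b}\bigl(R_{G_1,\tau,u_1},\ldots,R_{G_d,\tau,u_d}\bigr),
\]
and Proposition~\ref{prop:stable region}(ii) (again with $r=1$) delivers $|\mathrm{Arg}(R_{G,\tau,v})|<\pi$, so that $R_{G,\tau,v}\notin \mathbb{R}_{<0}$.

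The main obstacle is precisely this last step: a vertex of maximum degree $d+1$ introduces one more factor of $g$ than the cone $C$ is known to absorb, so cone invariance (Proposition~\ref{prop:stable region}(i)) no longer suffices and one has to rely on the sharper argument bound in Proposition~\ref{prop:stable region}(ii). This is exactly why the strengthened inductive statement must be restricted to vertices of degree at most $d$, and why both parts of Proposition~\ref{prop:stable region} were set up in advance.
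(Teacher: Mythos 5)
Your overall strategy --- induction with the strengthened claim that degree-$\le d$ vertices yield ratios in the cone $C$, padding with $g(1)=1$ so that Proposition~\ref{prop:stable region}(i) closes the inductive step, and one final application of Proposition~\ref{prop:stable region}(ii) at a degree-$(d+1)$ root --- is the same as the paper's. However, there is a gap: you apply Lemma~\ref{lem:ratio} at every inductive step without verifying its hypothesis, namely that for each $i$ not both $Z_{G_i,\tau_{i,0}}$ and $Z_{G_i,\tau_{i,1}}$ vanish. The membership $R_{G_i,\tau,u_i}\in C$ alone does not rule this out: $\infty\in C$, and the definition~\eqref{eq:ratio} returns $\infty$ whenever the denominator vanishes, \emph{regardless} of whether the numerator also vanishes. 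If the hypothesis of Lemma~\ref{lem:ratio} fails for some $i$, the displayed formula $R_{G,\tau,u}=\xi\prod_i g(R_{G_i,\tau,u_i})$ need not hold, and the inference that $R_{G,\tau,u}\in C$ (or avoids $\mathbb{R}_{<0}$ at the root) is unjustified as written.

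The paper closes this hole by carrying a second statement through the same induction: for every vertex $v$ of degree at most $d$, at least one of $Z_{G,\tau_{v,0}}$, $Z_{G,\tau_{v,1}}$ is nonzero. This is proved with the very cone property you already use: since each $R_i=R_{G_i,\tau,u_i}\in C$ and $b>0$ implies $bR_i\in C$ (and $b^{-1}C=C$), while $-1\notin C$, the factors $bZ_{G_i,\tau_{i,1}}+Z_{G_i,\tau_{i,0}}$ and $Z_{G_i,\tau_{i,1}}+bZ_{G_i,\tau_{i,0}}$ are nonzero, hence so is the relevant product. Once this nonvanishing is part of the inductive hypothesis, each application of Lemma~\ref{lem:ratio} in your argument is licensed and the proof goes through. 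You should add this second claim to your induction; note it is also exactly what Lemma~\ref{lemma27} and the observation~\eqref{eq:important observation} need downstream, so it is not an optional refinement.
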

\begin{proof}
Let $G=(V,E)\in \mathcal{G}_{d+1}$ and let $v\in V$.
We start by proving the following statements assuming that the degree of $v$ is at most $d$:
\begin{itemize}
\item[(i)]$Z_{G,\tau_{v,0}}\neq 0$, or $Z_{G,\tau_{v,1}}\neq 0$, and
\item[(ii)] the ratio $R_{G,\tau,v}$ is contained in the cone $C=C_b$.
\end{itemize}
As $-1\notin C$, this is clearly sufficient for this case.

The proof is by induction on the number of vertices of $G$.
If this number is equal to $1$, then either $X=V$, or $X=\emptyset$.
In the first case, either we have $Z_{G,\tau_{v,0}}\neq 0$, or $Z_{G,\tau_{v,1}}\neq 0$, as desired.
Similarly, we have $R_{G,\tau,v}$ is either equal to $0$, or to $\infty$.
In the latter case, we have $Z_{G,\tau_{v,0}}=1$, and $Z_{G,\tau_{v,1}}=\xi$ and hence $R_{G,\tau,v}=\xi\in C$.
This verifies the base case.

Let us assume that number of vertices is at least $2$.
Let $v_1,\ldots,v_m$ be the neighbors of $v$ and let $G_1,\ldots,G_m$ be the components of $G-v$ containing $v_1,\ldots,v_m$ respectively.
We just write $\tau$ for the restriction of $\tau$ to $X\cap V(G_i)$.
For $j=0,1$ we write $\tau_{i,j}$ for the boundary condition on $(X\cup \{v_i\})\cap V(G_i)$ obtained from $\tau$ where $v_i$ is set to $j$.
Suppose first that $v\notin X$ or that $v\in X$ and $\tau(v)=0$.
Then
\begin{equation}\label{eq:Z_0 not 0}
Z_{G,\tau_{v,0}}=\prod_{i=1}^m (b Z_{G_i,\tau_{i,1}}+Z_{G_i,\tau_{i,0}}).
\end{equation}
Now by induction, since the number of vertices in each $G_i$ is less than that in $G$, we know that for each $i$, $Z_{G_i,\tau_{i,1}}\neq 0$, or $Z_{G_i,\tau_{i,0}}\neq 0$.
Moreover, since for each $i$, $R_i:=R_{G_i,\tau,v_i}\in C$, we have, since $b>0$, $b R_i\in C$ and hence $b Z_{G_i,\tau_{i,1}}+Z_{G_i,\tau_{i,0}}\neq 0$, as $-1\notin C$.
This implies that $Z_{G,\tau_{v,0}}\neq 0$.
One shows with a similar argument (using that $1/b \cdot C=C$) that if $v\in X$ and $\tau(v)=1$, then $Z_{G,\tau_{v,1}}\neq 0$.

To see (ii), if $v\in X$, we have $R_{G,\tau,v}\in \{0,\infty\}$, otherwise, by Lemma~\ref{lem:ratio} we have,
\[
R_{G,\tau,v}=\xi_v\prod_{i=1}^d\frac{R_i+b}{b R_{i}+1}=F_{\xi,b}(R_1,\ldots,R_d),
\]
where in case $m<d$, we set $R_{m+1},\ldots,R_d$ all equal to $1$.
By part (i) of Proposition~\ref{prop:stable region} we conclude that $R_{G,\tau,v}$ is contained in $C$.
This concludes the proof of (i) and (ii).

To finish the proof, we must finally consider the case where the degree of $v$ is equal to $d+1$.
We only need to argue that $R_{G,\tau,v}\notin \mathbb{R}_{<0}$, since the argument that $Z_{G,\tau_{v,0}}\neq 0$, or $Z_{G,\tau_{v,1}}\neq 0$ is the same as above.
We again let $v_1,\ldots,v_{d+1}$ be the neighbors of $v$ and let $G_1,\ldots,G_{d+1}$ be the components of $G-v$ containing $v_1,\ldots,v_{d+1}$ respectively.
Let us write $R_i:=R_{G_i,\tau,v_i}$ for each $i$.
Then, by the first part of the proof we know that for each $u_i$ we have $R_i\in C$, from which we conclude by Lemma~\ref{lem:ratio} and Proposition~\ref{prop:stable region} (ii).
\[
R_{G,\tau,v}=g(R_{d+1})F_{\xi,b}(R_1,\ldots,R_d)\notin \mathbb{R}_{<0},
\]
This concludes the proof.
\end{proof}

\subsection{General bounded degree graphs}\label{ssec:graphs}
Here we conclude the proof of part (i) of Theorems A and B by utilizing Proposition~\ref{prop:good ratios trees}. To do so we need the  \emph{self avoiding walk tree} as introduced by Weitz~\cite{W06}.

Let $G$ be a connected graph of maximum degree at most $d+1$ and fix a vertex $v$ of $G$, which we call the \emph{base vertex}.
The \emph{self avoiding walk tree of $G$ at $v$} is a tree $T=T_{\text{SAW}}(G,v)$ whose vertices consists of walks in $G$ starting at $v$ that are of the form $w=(v,v_1,\ldots,v_n)$ such that the walk $(v,\ldots,v_{n-1})$ is self-avoiding (i.e. a path in the graph theoretic sense). The walks $w$ that are not self-avoiding, that is, whose last vertex closes a cycle in $G$, will be leaves of the tree. Note that walks ending in a leaf of $G$ are automatically leaves of the tree as well.
Two vertices (walks in $G$) $w_1$ and $w_2$ of $T$ are connected by an edge if one is the one-point extension of the other (as walks in $G$).
Note that the maximum degree of $T$ is at most $d+1$.
See Figure \ref{fig:SAW} for an example.

\begin{figure}
\includegraphics[width=\textwidth]{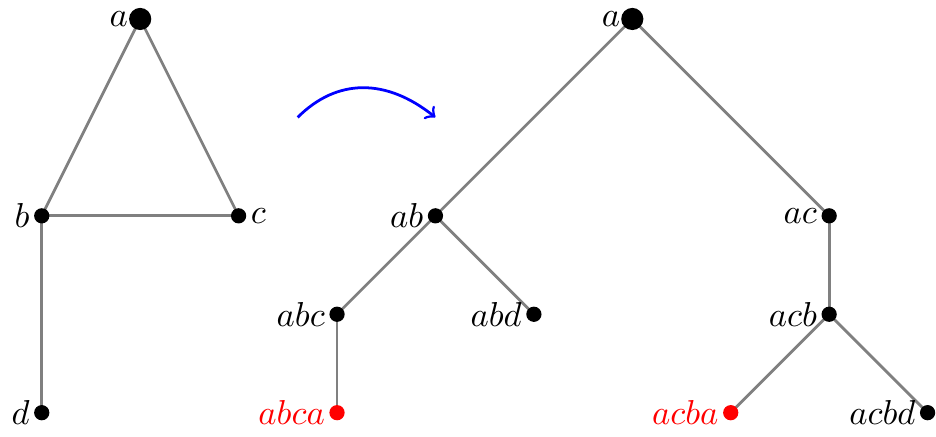}
\caption{A graph and its associated tree of self-avoiding walks. The cycles inducing boundary conditions are marked in red.}
\label{fig:SAW}
\end{figure}

We next fix a boundary condition $\tau_G$ on some of the leaves of $T$.
To do so we fix for each vertex $u$ of $G$ an arbitrary ordering of the edges incident with it.
We only fix leaves corresponding to walks closing a cycle in $G$.
Such a walk will be set to $0$ if the edge closing the cycle is larger than the edge starting the cycle and set to $1$ otherwise.
If $\sigma$ is a boundary condition on a subset of the leaves of the graph $G$, we extend $\tau_G$ by assigning the same value to any vertex of $T$, corresponding to a path ending at such a leaf.
Let $(\xi_u)_{u\in V}$ be complex numbers associated with the vertices of $G$.
We associate these variables to the vertices of $T$ as follows.
For a vertex $w$ of $T$ let $u$ be the last vertex in the corresponding walk in $G$. Then we set $\xi_w:=\xi_u$.

\begin{prop}\label{prop:SAW}
Let $G=(V,E)$ be a connected graph of maximum degree at most $d+1$, and with vertex $v\in V$.
Let $X\subset V\setminus \{v\}$ be a collection of leaves of $G$, and let $\sigma:X\to\{0,1\}$ be a boundary condition on $X$.
Set $\xi_u= \xi$ for all $u\notin X$, and choose $\xi_u \neq 0$ arbitrarily for $u \in X$.
Then
\begin{equation}\label{eq:equal ratios}
R_{T,\tau_G,v}=R_{G,\sigma,v},
\end{equation}
where both sides are considered as rational functions in $\xi$.
\end{prop}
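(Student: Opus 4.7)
I plan to prove Proposition~\ref{prop:SAW} by induction on the number of edges of $G$. In the base case $G$ is itself a tree rooted at $v$, so no walk from $v$ closes a cycle; then $T=T_{\text{SAW}}(G,v)$ is canonically isomorphic to $G$, the boundary condition $\tau_G$ restricts to $\sigma$, and \eqref{eq:equal ratios} is tautological.

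For the inductive step, let $v_1,\ldots,v_d$ denote the neighbors of $v$ in $G$ in the fixed ordering at $v$. The heart of the argument is the graph-side recursion
\[
R_{G,\sigma,v} \;=\; \xi\,\prod_{i=1}^d \frac{R_{G_i,\sigma_i,v_i}+b}{b\,R_{G_i,\sigma_i,v_i}+1}, \qquad (\ast)
\]
where $G_i$ is obtained from $G$ by deleting $v$ (together with its incident edges) and, for each $j\neq i$, attaching a new pendant leaf $v^{(j)}$ to $v_j$, and where $\sigma_i$ extends $\sigma$ by fixing $v^{(j)}=1$ for $j<i$ and $v^{(j)}=0$ for $j>i$. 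Each $G_i$ has strictly fewer edges than $G$, so the induction hypothesis applies. Granting $(\ast)$, one checks by a direct combinatorial comparison that the subtree of $T$ rooted at the walk $(v,v_i)$ is isomorphic to $T_{\text{SAW}}(G_i,v_i)$ in a way that matches $\tau_G$ with $\tau_{G_i}$: the bijection sends a walk $(v,v_i,w_1,\ldots,w_n)$ with $w_n\neq v$ to $(v_i,w_1,\ldots,w_n)$, and a cycle-closing walk $(v,v_i,\ldots,v_j,v)$ to $(v_i,\ldots,v_j,v^{(j)})$. The key check is that the boundary value assigned by $\tau_G$ to such a cycle-closing walk --- namely $1$ iff the closing edge $\{v,v_j\}$ precedes $\{v,v_i\}$ in the ordering at $v$, i.e.\ iff $j<i$ --- coincides with the value assigned to $v^{(j)}$ by $\sigma_i$. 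Combining the induction hypothesis with $(\ast)$ and Lemma~\ref{lem:ratio} applied at the root of $T$ then yields \eqref{eq:equal ratios}.

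The main obstacle is the proof of $(\ast)$, which I plan to establish by a telescoping computation. Define, for $i\in\{0,1,\ldots,d\}$,
\[
\tilde Z_i \;=\; \sum_{\substack{U\subseteq V\setminus\{v\}\\ U\sim\sigma}} \xi^{|U|}\, b^{|\delta_{G-v}U|} \prod_{k\leq i} b^{[v_k\notin U]}\prod_{k>i} b^{[v_k\in U]},
\]
so that $\tilde Z_0 = Z_{G,\sigma_{v,0}}$ and $\xi\,\tilde Z_d = Z_{G,\sigma_{v,1}}$, which gives $R_{G,\sigma,v}=\xi\prod_{i=1}^d \tilde Z_i/\tilde Z_{i-1}$. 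Splitting the sum defining $\tilde Z_i$ according to $v_i\in U$ versus $v_i\notin U$ produces $\tilde Z_i = Z^{(i)}_+ + b\,Z^{(i)}_-$ and $\tilde Z_{i-1} = b\,Z^{(i)}_+ + Z^{(i)}_-$ for suitable partial sums $Z^{(i)}_\pm$. A direct computation identifies $Z^{(i)}_\pm$ with $\prod_{j<i}\xi_{v^{(j)}}$ times $Z_{G_i,\sigma_i,v_i=1}$, respectively $Z_{G_i,\sigma_i,v_i=0}$: the pendant edge $\{v_j,v^{(j)}\}$ in $G_i$ contributes $b^{[v_j\notin U]}$ when $j<i$ (since $v^{(j)}=1$) and $b^{[v_j\in U]}$ when $j>i$ (since $v^{(j)}=0$), matching exactly the factors prescribed by $\tilde Z_i$. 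Taking the ratio cancels the $\xi_{v^{(j)}}$ prefactors and yields the $i$-th factor of $(\ast)$. The subtle point is that the asymmetric convention $v^{(j)}=1$ for $j<i$ and $v^{(j)}=0$ for $j>i$ is exactly what both makes the telescoping collapse cleanly and matches the SAW tree convention for boundary values on cycle-closing walks; aligning these two conventions is where the bookkeeping is tightest.
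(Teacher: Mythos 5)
Your proof is correct and follows essentially the same route as the paper's: the identity $(\ast)$ is precisely the Weitz-style telescoping decomposition that the paper establishes by passing to the auxiliary graph $\hat G$ (your $G_i$'s are the paper's $\hat G$ with $v_i$ deleted and the remaining pendants fixed by $\hat\sigma_i$), and the remaining verification that the subtrees of $T$ match $T_{\text{SAW}}(G_i,v_i)$ with compatible boundary conditions is the same bookkeeping. One small point worth making explicit: $G_i$ may be disconnected when $v$ is a cut vertex of $G$, so before invoking the induction hypothesis you should pass to the connected component of $G_i$ containing $v_i$ (the ratio only depends on that component since the partition function factors over components); the paper handles this by introducing the component $\hat G_i$.
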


We remark that when all $\xi_u$ are positive this lemma is essentially due to Weitz~\cite{W06} (even though in~\cite{W06} only the independence polynomial was considered).

\begin{proof}
We use induction on the number of free vertices of $G$. If the number of free vertices is $1$ then the free vertex is $v$, and all other vertices are leaves. Thus $G$ is a tree and equals its tree of self-avoiding walks, and the statement is immediate.

We may therefore assume that $V$ has at least $2$ free vertices, and that equality \eqref{eq:equal ratios} holds for graphs with fewer free vertices. Let us denote the neighbors of $v$ by $u_1, \ldots , u_m$. We construct a graph $\hat{G}$ by replacing the vertex $v$ with $v_1, \ldots , v_m$, each $v_j$ having exactly one neighbor: the vertex $u_j$. To each of the vertices $v_j$ we assign the external field parameter $\xi^{\frac{1}{m}}$, using the same holomorphic branch of the $m$-th root for all the vertices $v_j$. We introduce boundary conditions $\sigma_i$, for $i = 0,\ldots m$, each extensions of $\sigma$, by setting $\sigma_i(v_j) = 1$ when $j \le i$ and $\sigma_i(v_j) = 0$ when $j > i$. Thus $\sigma_0$ assigns $0$ to all the vertices $v_j$, and $\sigma_m$ assigns $1$ to all the vertices $v_j$. It follows from our choice of the external field parameter $\xi^{\frac{1}{m}}$ that
$$
Z_{\hat{G},\sigma_0} = Z_{G, \sigma_{v,0}},
$$
and
$$
Z_{\hat{G},\sigma_m} = Z_{G, \sigma_{v,1}},
$$
and therefore
$$
R_{G,\sigma,v} = \frac{Z_{\hat{G},\sigma_m}}{Z_{\hat{G},\sigma_0}} = \prod_{i=1}^{m} \frac{Z_{\hat{G},\sigma_i}}{Z_{\hat{G},\sigma_{i-1}}}.
$$
Writing $\hat{\sigma}_i$ for the restriction of $\sigma_i$ obtained by freeing the vertex $v_i$, it follows that
$$
\frac{Z_{\hat{G},\sigma_i}}{Z_{\hat{G},\sigma_{i-1}}} = R_{\hat{G}, \hat{\sigma}_i, v_i}.
$$
Write $\hat{G}_i$ for the connected component of $\hat{G}$ that contains the vertex $v_i$, and observe that $\hat{G}_i$ with boundary condition $\hat{\sigma}_i$ has at most as many free vertices as $G$. Let us stress that $G$ is not necessarily a tree, hence $\hat{G}_i$ can equal $\hat{G}_j$ for $i \neq j$. It therefore follows from our induction hypothesis that
$$
R_{\hat{G}_i - v_i, \hat{\sigma}_i,u_i} = R_{T_{\text{SAW}}(\hat{G}_i - v_i, u_i), \hat{\tau}_i, u_i},
$$
where $\hat{\tau}_i$ is the boundary condition on $T_{\text{SAW}}(\hat{G}_i - v_i, u_i)$ induced by $\hat{\sigma}_i$.
Observing that the vertex $v_i$ has only one neighbor in $\hat{G}_i$, namely $u_i$, it follows from the same argument as used in the proof of Lemma~\ref{lem:ratio} that
$$
R_{\hat{G}_i,\hat{\sigma}_i,v_i} = \xi^{1/d} g(R_{\hat{G}_i- v_i, \hat{\sigma}_i,u_i}) = \xi^{1/d} g(R_{T_{\text{SAW}}(\hat{G}_i- v_i, u_i), \hat{\tau}_i, u_i})
$$
where $g$ denotes the M\"obius transformation $R \mapsto \frac{R+b}{bR+1}$. By applying Lemma~\ref{lem:ratio} to $T_{\text{SAW}}(G,v)$, we obtain
$$
R_{G,\sigma,v} = \prod_{i=1}^{m} R_{\hat{G}_i,\hat{\sigma}_i,v_i} = \xi \cdot \prod_{i=1}^{m} g(R_{T_{\text{SAW}}(\hat{G}_i- v_i, u_i), \hat{\tau}_i, u_i}) = R_{T_{\text{SAW}}(G,v),\tau,v},
$$
where the boundary condition $\tau$ on $T_{\text{SAW}}(G,v)$ is obtained from the boundary conditions $\hat{\tau_i}$, and therefore satisfies the following:
\begin{enumerate}
\item[$\bullet$] Walks ending in a leaf $u \in X$ are assigned the boundary condition $\sigma(u)$.
\item[$\bullet$] Walks ending in a leaf $u \notin X$ are not assigned a boundary condition.
\item[$\bullet$] The boundary condition of a cycle $(v, u_i, \ldots, u_j, v)$ depends on the relative ordering of the neighbors $u_i, u_j$ in the arbitrarily chosen numbering $u_1, \ldots , u_m$. We stress that this numbering is identical for all the cycles.
\item[$\bullet$] The boundary condition of a walk that is not a cycle but ends with a cycle is determined in the induction process, again depending on the chosen numbering of the edges incident to the vertex at the start and end of the cycle.
\end{enumerate}
Thus, the boundary condition $\tau$ satisfies the rules described for $\tau_G$, and the proof is complete.
\end{proof}

We will now prove that it was correct to consider the ratios $R$ as rational functions in $\xi$. We remark that in the previous proposition the choice of $\xi$ was irrelevant, while in what follows it plays an essential role in the proof.

\begin{lemma}\label{lemma27}
Under the hypotheses of the previous proposition we have that $Z_{G,\sigma_{v,0}}\neq 0$, and $Z_{G,\sigma_{v,1}}\neq 0$.
\end{lemma}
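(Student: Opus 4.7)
The difficulty in Lemma \ref{lemma27} is that Propositions \ref{prop:SAW} and \ref{prop:good ratios trees} together only yield $R_{G, \sigma, v}(\xi) \neq -1$, which prevents $Z_{G, \sigma_{v, 0}}$ and $Z_{G, \sigma_{v, 1}}$ from vanishing simultaneously, but not individually. My plan is to prove, by induction on the number of free vertices, the following stronger auxiliary claim: for every connected graph $H \in \mathcal{G}_{d+1}$ with boundary $\sigma$ on a set $Y$ consisting of leaves of $H$, with $\xi_u = \xi$ for $u \notin Y$ and $\xi_u \neq 0$ arbitrary for $u \in Y$, one has $Z_{H, \sigma}(\xi, b) \neq 0$; by multiplicativity the same conclusion extends to disconnected $H$. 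Both the inductive step and the reduction of Lemma \ref{lemma27} to this claim rely on a leaf-substitution trick: given such $(H, \sigma)$, a vertex $w \in V(H) \setminus Y$, and $j \in \{0, 1\}$, form $H_j$ by removing $w$ and attaching to each neighbor $u$ of $w$ in $H$ a new leaf $u'$, with boundary value $j$ and any nonzero $\xi_{u'}$. A direct comparison of the factors contributed by the edges incident to $w$ in $(H, \sigma_{w, j})$ with those from the new leaf edges in $(H_j, \sigma'_j)$ shows $Z_{H, \sigma_{w, j}}(\xi, b) = c_j \cdot Z_{H_j, \sigma'_j}(\xi, b)$ for some nonzero scalar $c_j$. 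The graph $H_j$ is again in $\mathcal{G}_{d+1}$, has boundary on leaves only, and has one fewer free vertex than $H$, though it may be disconnected.

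The base case (no free vertices) is immediate, since $Z_{H, \sigma}$ is then a nonzero monomial in the parameters $\xi_u$ and $b$. For the inductive step with connected $H$ having $n \geq 1$ free vertices, pick any free vertex $u$. Applying leaf substitution at $u$ for $j = 0$ and $j = 1$, and invoking the induction hypothesis on each connected component of the resulting graph, yields $Z_{H, \sigma_{u, 0}}(\xi) \neq 0$ and $Z_{H, \sigma_{u, 1}}(\xi) \neq 0$; in particular $R_{H, \sigma, u}(\xi)$ is a well-defined complex number. By Proposition \ref{prop:SAW} applied to $H$ at $u$, the rational functions $R_{H, \sigma, u}$ and $R_{T, \tau_H, u}$ in $\xi$ are equal, where $T = T_{\text{SAW}}(H, u)$; equivalently, the polynomial identity
\begin{equation*}
Z_{H, \sigma_{u, 1}} \cdot Z_{T, (\tau_H)_{u, 0}} = Z_{H, \sigma_{u, 0}} \cdot Z_{T, (\tau_H)_{u, 1}}
\end{equation*}
holds. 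The proof of Proposition \ref{prop:good ratios trees} guarantees that at least one of $Z_{T, (\tau_H)_{u, 0}}(\xi)$ and $Z_{T, (\tau_H)_{u, 1}}(\xi)$ is nonzero; combined with $Z_{H, \sigma_{u, 0}}(\xi) \neq 0$, the identity then forces $Z_{T, (\tau_H)_{u, 0}}(\xi) \neq 0$, and hence the pointwise equality $R_{H, \sigma, u}(\xi) = R_{T, \tau_H, u}(\xi)$ holds. By Proposition \ref{prop:good ratios trees}, this common value is not in $\mathbb{R}_{<0}$, in particular not $-1$, and observation \eqref{eq:important observation} gives $Z_{H, \sigma}(\xi, b) \neq 0$, completing the induction.

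The main obstacle in this argument is precisely the last step: Proposition \ref{prop:SAW} only provides an equality of rational functions, and common polynomial factors in numerator and denominator could a priori spoil the pointwise equality at our specific $\xi$. The remedy is to combine the explicit polynomial identity with the non-simultaneous vanishing of the tree partition functions established by the proof of Proposition \ref{prop:good ratios trees}, which forces the tree denominator to be nonzero precisely when the graph denominator is. Finally, Lemma \ref{lemma27} itself follows by applying the auxiliary claim componentwise to the two graphs obtained from $G$ via leaf substitution at $v$ with boundary values $0$ and $1$ respectively.
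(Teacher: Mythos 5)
Your proof is correct and follows essentially the same approach as the paper: both induct on the number of free vertices, use the vertex-splitting/leaf-substitution construction to reduce that count, and then combine Propositions \ref{prop:SAW} and \ref{prop:good ratios trees} with observation \eqref{eq:important observation} to conclude nonvanishing. The only differences are cosmetic (you package the induction as a single-output auxiliary claim about $Z_{H,\sigma}$ and deduce the two-sided statement from it, whereas the paper proves the two-sided statement directly) together with a welcome explicit justification of the step from rational-function equality in Proposition \ref{prop:SAW} to pointwise equality at the fixed $\xi$, a point the paper passes over more quickly.
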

\begin{proof}
Again we prove the statement by induction on the number of free vertices. When the number of free vertices is $1$ the free vertex is $v$. It follows that
$$
Z_{G, \sigma_{v,0}} = \prod_{\sigma(u) = 1} b\xi_u \neq 0,
$$
and similarly $Z_{G,\sigma_{v,1}} \neq 0$.

So let us now assume that $|V\setminus X|>1$. We again denote by $\hat{G}$ the graph obtained by replacing the vertex $v$ by vertices $v_1, \ldots , v_m$, each $v_i$ neighboring only the vertex $u_i$, using again $\xi^{1/m}$ for all the vertices $v_i$. We denote by $\sigma_0, \ldots, \sigma_m$ the extensions of $\sigma$ introduced in the proof of the previous proposition, and we write $\hat{\sigma} = \sigma_i$ for some $i \in \{0, \ldots , m\}$. We will prove that $Z_{\hat{G}, \hat{\sigma}} \neq 0$ for all $i = 0, \ldots , m$, which implies the statement of the lemma since $Z_{G,\sigma_{v,0}} = Z_{\hat{G}, \sigma_0}$ and $Z_{G, \sigma_{v,1}} = Z_{\hat{G}, \sigma_m}$.

Denote by $H$ a connected component of $\hat{G}$. It suffices to show that $Z_{H,\hat \sigma}\neq 0$, as the partition function is multiplicative over components. As $G$ was assumed to be connected, $H$ contains a vertex $v_i\in \{v_1,\ldots,v_m\}$.
Let $H':=H-v_i$. Then $H'$ contains fewer free vertices than $G$.
Let us first assume that $u_i$ is not a fixed leaf in $G$, i.e., either a leaf that is not fixed or not a leaf. Then by the induction hypothesis it follows that $Z_{H',\hat \sigma_{u_i,0}}\neq 0$ and $Z_{H',\hat \sigma_{u_i,1}}\neq 0$, hence by Proposition \ref{prop:SAW} it follows that
\[
R_{H',\hat \sigma,u_i}=R_{T_\text{SAW}(H',u_i),\tau_{H'},u_i},
\]
where by Proposition \ref{prop:good ratios trees} the latter ratio does not lie in $\mathbb{R}_{<0}$.
Since, we have that $Z_{H,\hat \sigma}$ equals either $Z_{H',\hat \sigma_{u_i,0}}+bZ_{H',\hat \sigma_{u_i,1}}$, or $\xi_{v_{i}}(bZ_{H',\hat \sigma_{u_i,0}}+Z_{H',\hat \sigma_{u_i,1}})$, it follows that $Z_{H,\hat \sigma}\neq 0$.

If instead $u_i$ is a fixed leaf in $G$, then $H'=H-v_i$ just consists of the vertex $u_i$ and therefore  $Z_{H',\hat \sigma_{u_i,0}}\neq 0$ and $Z_{H',\hat \sigma_{u_i,1}}=0$, or vice versa, from which it again follows that $Z_{H,\hat \sigma}\neq 0$. This completes the proof.
\end{proof}

\begin{proof}[Proof of part (i) of Theorems A and B]
We may assume that $G$ is connected since the partition function is multiplicative over components.
Fix any vertex $v\in V$, and denote by $\tau$ the empty boundary condition on $G$. 
By the previous lemma both $Z_{G,\tau_{v,0}}$ and $Z_{G,\tau_{v,1}}$ are nonzero.
Moreover, the ratio $R_{G,v}$ is not equal to $-1$ by Propositions~\ref{prop:good ratios trees} and \ref{prop:SAW}.
Therefore by \eqref{eq:important observation} we conclude that $Z_G\neq 0$.
\end{proof}

\section*{Acknowledgements}
We thank Sander Bet, Ferenc Bencs, Pjotr Buys, David de Boer and Eoin Hurley for useful comments on a previous version of this paper.
We moreover thank the anonymous referees for helpful comments improving the presentation of the paper as well as for suggesting a number of important references.

\end{document}